\DeclareMathOperator*{\pro}{Pro}
\DeclareMathOperator*{\incpro}{IncPro}
\DeclareMathOperator*{\togpro}{TogPro}
\DeclareMathOperator*{\jdtpro}{JdtPro}
\DeclareMathOperator*{\jdt}{jdt}
\DeclareMathOperator*{\row}{\rm Row}
\DeclareMathOperator{\kpro}{{\it K}-Pro}
\DeclareMathOperator{\kbk}{{\it K}-BK}
\DeclareMathOperator{\rk}{\mathrm{rk}}
\newcommand{\inc}[2]{\mathrm{Inc}^{#1}(#2)}
\newcommand{\winc}[2]{\mathrm{Inc}^{#1}_{\leq}(#2)}
\newtheorem{theorem}{Theorem}[section]
\newtheorem{proposition}[theorem]{Proposition}
\newtheorem{lemma}[theorem]{Lemma}
\newtheorem{corollary}[theorem]{Corollary}
\newtheorem*{MainResult1}{Theorem~\ref{thm:Gamma1MeetIrred}}
\newtheorem*{MainResult2}{Theorem~\ref{thm:bk=jdt}}
\newtheorem*{MainResult3}{Theorem~\ref{thm:newprorow}}
\newtheorem*{MainResult4}{Theorem~\ref{thm:proequalspro}}
\newtheorem*{MainResult5}{Theorem~\ref{thm:prorow}}
\newtheorem*{MainResult6}{Corollary~\ref{thm:res}}
\newtheorem*{MainResult7}{Corollary~\ref{cor:rowres}}
\newtheorem*{MainResult8}{Corollary~\ref{cor:prorow}}
\newtheorem*{MainResult9}{Corollary~\ref{cor:cartesian}}
\theoremstyle{definition}
\newtheorem{definition}[theorem]{Definition}
\newtheorem*{DefTogOrder}{Definition~\ref{TogOrder}}
\newtheorem*{DefTogPro}{Definition~\ref{def:TogPro}}
\newtheorem{example}[theorem]{Example}
\newtheorem{remark}[theorem]{Remark}
\numberwithin{equation}{section}
\title{Rowmotion and increasing labeling promotion}
\author{Kevin Dilks, Jessica Striker, Corey Vorland}
\begin{document}
\maketitle

\begin{abstract}
In 2012, N.\ Williams and the second author showed that on order ideals of ranked partially ordered sets (posets), rowmotion is conjugate to (and thus has the same orbit structure as) a different toggle group action, which in special cases is equivalent to promotion on linear extensions of posets constructed from two chains. In 2015, O.\ Pechenik and the first and second authors extended these results to show that increasing tableaux under $K$-promotion naturally corresponds to order ideals in a product of three chains under a toggle group action conjugate to rowmotion they called hyperplane promotion.

In this paper, we generalize these results to the setting of arbitrary increasing labelings of any finite poset with given restrictions on the labels. We define a generalization of $K$-promotion in this setting and show it corresponds to a toggle group action we call toggle-promotion on order ideals of an associated poset. When the restrictions on labels are particularly nice (for example, specifying a global bound on all labels used), we show that toggle-promotion is conjugate to rowmotion. Additionally, we show that any poset that can be nicely embedded into a Cartesian product has a natural toggle-promotion action conjugate to rowmotion.
\end{abstract}

\tableofcontents

\section{Introduction}
In this paper, we define a natural generalization of M.-P.\ Sch\"utzenberger's \emph{promotion} operator in the setting of \emph{increasing labelings} on any finite poset. We relate this generalized promotion to 
\emph{rowmotion}, denoted $\row$, defined on any order ideal $I$ of a finite partially ordered set $P$ as the order ideal generated by the minimal elements of $P\setminus I$. We do this by showing our generalized promotion is conjugate to {rowmotion} in the \emph{toggle group} on the associated poset of meet-irreducibles of these increasing labelings under their natural partial order. 

This paper builds upon work of N.\ Williams and the second author~\cite{SW2012} regarding promotion and rowmotion on posets with a two-dimensional lattice projection and subsequent work of O.\ Pechenik with the first and second authors~\cite{DPS2015} which $n$-dimensionalized this result, relating \emph{$K$-promotion} on \emph{increasing tableaux} to rowmotion on the product of three chains poset. 

We describe our main results in Subsection~\ref{subsec:main_results} and then give a brief history of promotion and rowmotion in Subsection~\ref{sec:background}, with a focus on the motivating results from these two papers.

\subsection{Main Results}
\label{subsec:main_results}
Throughout this paper, let $P$ be a finite partially ordered set (poset). 

\begin{definition}
We say that a function $f:P\rightarrow \mathbb{Z}$ is an \emph{increasing labeling} if $p_1< p_2$ in $P$ implies that $f(p_1)< f(p_2)$ (with the usual total ordering on the integers). We will be interested in sets of increasing labelings on $P$ given a restriction function
$R:P\mapsto \mathcal{P}(\mathbb{Z})$ indicating which labels each poset element is allowed to attain (where $\mathcal{P}(\mathbb{Z})$ is the power set of $\mathbb{Z}$). 
We require $R(p)$ to be nonempty and finite for each $p\in P$. Call the set of such increasing labelings $\inc{R}{P}$. 
\end{definition}

See Figures~\ref{fig:TogIsProEx_intro} and \ref{fig:IncPro} for examples.

\begin{remark}
Up to conventions of increasing versus decreasing, increasing labelings can also be thought of as \emph{strict $P$-partitions with restricted parts}. We use the terminology of increasing labelings rather than $P$-partitions since we are generalizing from increasing tableaux rather than from integer partitions. See Remarks~\ref{remark:propp}, \ref{rem:Ppart}, and \ref{rem:Ppart2} for more on the connection to $P$-partitions.
\end{remark}

We consider a natural partial order on $\inc{R}{P}$, where $f\leq g$ if and only if $f(p)\leq g(p)$ for all $p\in P$. 
This is a distributive lattice, so we may apply Birkhoff's Representation Theorem 
to obtain a representation in terms of order ideals in a poset. A subset $I$ of $P$ is called an \emph{order ideal} if for any $t \in I$ and $s \le t$ in $P$, then $s \in I$. Let $J(P)$ denote the set of order ideals of $P$.

Our \textbf{first main result} describes the poset of meet irreducibles of the distributive lattice $\inc{R}{P}$, which we denote as $\Gamma(P,R)$. This gives a bijection between the increasing labelings $\inc{R}{P}$ and the order ideals  $J(\Gamma(P,R))$. 
We construct $\Gamma(P,R)$ in Section~\ref{subsec:gamma1}, but note here that the elements of $\Gamma(P,R)$ are certain ordered pairs $(p,k)$, where $p\in P$ and $k\in\mathbb{Z}$. 

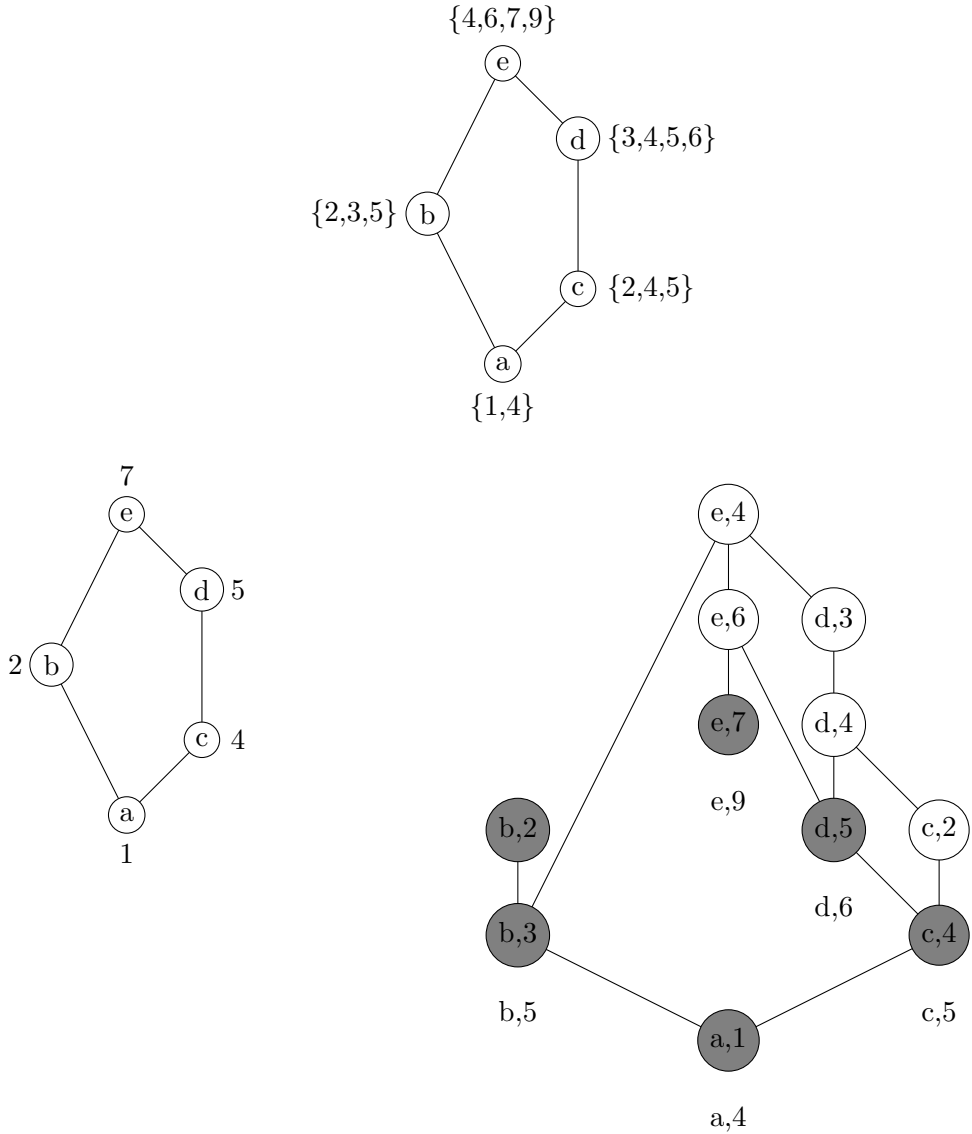
\begin{figure}[hbtp]
\begin{tikzpicture}
\begin{scope}[yshift=5cm,xshift=5cm]
\coordinate (a) at (0,0);
\coordinate (b) at (-1,2);
\coordinate (c) at (1,1);
\coordinate (d) at (1,3);
\coordinate (e) at (0,4);
\draw[] (a) -- (b) -- (e) -- (d) -- (c) -- cycle ;
\node[fill=white,draw,circle,inner sep=.5ex] at (a) {a};
\node[fill=white,draw,circle,inner sep=.5ex] at (b) {b};
\node[fill=white,draw,circle,inner sep=.5ex] at (c) {c};
\node[fill=white,draw,circle,inner sep=.5ex] at (d) {d};
\node[fill=white,draw,circle,inner sep=.5ex] at (e) {e};
\node [below=.25cm] at (a) {\{1,4\}};
\node [left=.25cm] at (b) {\{2,3,5\}};
\node [right=.25cm] at (c) {\{2,4,5\}};
\node [right=.25cm] at (d) {\{3,4,5,6\}};
\node [above=.25cm] at (e) {\{4,6,7,9\}};
\end{scope}

\begin{scope}[yshift=-1cm,xshift=0cm]
\coordinate (a) at (0,0);
\coordinate (b) at (-1,2);
\coordinate (c) at (1,1);
\coordinate (d) at (1,3);
\coordinate (e) at (0,4);
\draw[] (a) -- (b) -- (e) -- (d) -- (c) -- cycle ;
\node[fill=white,draw,circle,inner sep=.5ex] at (a) {a};
\node[fill=white,draw,circle,inner sep=.5ex] at (b) {b};
\node[fill=white,draw,circle,inner sep=.5ex] at (c) {c};
\node[fill=white,draw,circle,inner sep=.5ex] at (d) {d};
\node[fill=white,draw,circle,inner sep=.5ex] at (e) {e};
\node [below=.25cm] at (a) {1};
\node [left=.25cm] at (b) {2};
\node [right=.25cm] at (c) {4};
\node [right=.25cm] at (d) {5};
\node [above=.25cm] at (e) {7};
\end{scope}

\begin{scope}[yshift=-4cm,xshift=8cm,scale=1.4]
\coordinate (a1) at (0,0);
\coordinate (a4) at (0,-.75);
\coordinate (b2) at (-2,2);
\coordinate (b3) at (-2,1);
\coordinate (b5) at (-2,0.25);
\coordinate (c2) at (2,2);
\coordinate (c4) at (2,1);
\coordinate (c5) at (2,0.25);
\coordinate (d3) at (1,4);
\coordinate (d4) at (1,3);
\coordinate (d5) at (1,2);
\coordinate (d6) at (1,1.25);
\coordinate (e4) at (0,5);
\coordinate (e6) at (0,4);
\coordinate (e7) at (0,3);
\coordinate (e9) at (0,2.25);
\draw[] (b2) -- (b3);
\draw[] (c2) -- (c4);
\draw[] (d3) -- (d4) -- (d5);
\draw[] (e4) -- (e6) -- (e7);
\draw[] (a1) -- (b3);
\draw[] (a1) -- (c4);
\draw[] (b3) -- (e4);
\draw[] (c4) -- (d5);
\draw[] (c2) -- (d4);
\draw[] (d5) -- (e6);
\draw[] (d3) -- (e4);
\node[fill=gray,draw,circle,inner sep=.5ex] at (a1) {a,1};
\node[] at (a4) {a,4};
\node[fill=gray,draw,circle,inner sep=.5ex] at (b2) {b,2};
\node[fill=gray,draw,circle,inner sep=.5ex] at (b3) {b,3};
\node[] at (b5) {b,5};
\node[fill=white,draw,circle,inner sep=.5ex] at (c2) {c,2};
\node[fill=gray,draw,circle,inner sep=.5ex] at (c4) {c,4};
\node[] at (c5) {c,5};
\node[fill=white,draw,circle,inner sep=.5ex] at (d3) {d,3};
\node[fill=white,draw,circle,inner sep=.5ex] at (d4) {d,4};
\node[fill=gray,draw,circle,inner sep=.5ex] at (d5) {d,5};
\node[] at (d6) {d,6};
\node[fill=white,draw,circle,inner sep=.5ex] at (e4) {e,4};
\node[fill=white,draw,circle,inner sep=.5ex] at (e6) {e,6};
\node[fill=gray,draw,circle,inner sep=.5ex] at (e7) {e,7};
\node[] at (e9) {e,9};
\end{scope}
\end{tikzpicture}
\caption{Top: A poset $P$ with restriction function $R$ given by the sets of possible labels at each vertex; Left: An increasing labeling in $\inc{R}{P}$; Right: The  poset $\Gamma(P,R)$ with elements shaded to form the order ideal that corresponds to the increasing labeling on the left under the bijection of Theorem~\ref{thm:Gamma1MeetIrred}.}
\label{fig:TogIsProEx_intro}
\end{figure}

 \begin{MainResult1}
 %\label{thm:Gamma1MeetIrred}
%The poset 
$\Gamma(P,R)$ is isomorphic to the dual of the lattice of meet irreducibles of $\inc{R}{P}$. Therefore, order ideals of $\Gamma(P,R)$ are in bijection with the increasing labelings $\inc{R}{P}$.
\end{MainResult1}

Next, we define increasing labeling promotion, which we denote as $\incpro$, via {generalized Bender-Knuth involutions}.
Our \textbf{second main result}, Theorem~\ref{thm:proequalspro}, shows the bijection of Theorem~\ref{thm:Gamma1MeetIrred} equivariantly takes $\incpro$ to a natural {toggle group action} $\togpro_{H_{\Gamma}}$ on order ideals of $\Gamma(P,R)$. We define $\incpro$ and $\togpro_{H_{\Gamma}}$ below and then state Theorem~\ref{thm:proequalspro} relating them.  

%Let $R(p)_{>k}$ denote the smallest label of $R(p)$ that is larger than $k$, and $R(p)_{<k}$ denote the largest label of $R(p)$ less than $k$.
\begin{definition}
\label{def:bkR}
%Suppose $R:P\mapsto \mathcal{P}(\mathbb{Z})$ is a consistent map of possible labels. 
Let $p\in P$ and $f\in\inc{R}{P}$. 
For each $i\in\mathbb{Z}$,
define the \emph{$i$th generalized Bender-Knuth involution} $\rho_i:\inc{R}{P}\rightarrow \inc{R}{P}$  as follows:

\[\rho_i(f)(p)=\begin{cases}
R(p)_{>i} &  f(p)=i \mbox{ and the resulting labeling is still in } \inc{R}{P}\\
i &  f(p)=R(p)_{>i} \mbox{ and the resulting labeling is still in } \inc{R}{P}\\
f(p) &\mbox{otherwise,}
\end{cases}\]
where $R(p)_{>i}$ denotes the smallest label in $R(p)$ that is larger than $i$.
That is, $\rho_i$ changes $i$ to $R(p)_{>i}$ and/or $R(p)_{>i}$ to $i$ {wherever possible}. 

Define \emph{increasing labeling promotion} as $\incpro(f)=\cdots\circ\rho_{3}\circ\rho_{2}\circ\rho_{1}\circ\cdots(f)$.
\end{definition}

See Figure~\ref{fig:IncPro} for an example. Note
since $P$ is finite and $R(p)$ is finite for each $p\in P$, the infinite product of the $\rho_i$ reduces to a finite product.

\begin{remark}
\label{remark:propp}
J.\ Propp has also defined a generalized promotion on $P$-partitions using local involutions \cite{Propp2015},
as an analogue of \emph{piecewise-linear promotion} \cite{EP2014}. 
Propp's notion is a piecewise-linear lift of our Definition~\ref{def:bkR}, restricted to
 the integer points in a dilation of the order polytope.
\end{remark}

\begin{figure}[htbp]
\scalebox{.9}{
\resizebox{\textwidth}{!}{
\begin{tikzpicture}[xshift=-5cm]

\node at (0,0) {
\begin{tikzpicture}
\coordinate (a) at (0,0);
\coordinate (b) at (1,1);
\coordinate (c) at (-1,1);
\coordinate (d) at (-2,2);
\coordinate (e) at (-3,1);
\draw[] (b) -- (a) -- (c) -- (d) -- (e) ;
\node[fill=white,draw,circle,inner sep=.75ex] at (a) {1};
\node[fill=white,draw,circle,inner sep=.75ex] at (b) {3};
\node[fill=white,draw,circle,inner sep=.75ex] at (c) {3};
\node[fill=white,draw,circle,inner sep=.75ex] at (d) {5};
\node[fill=white,draw,circle,inner sep=.75ex] at (e) {2};
\end{tikzpicture}
};
\node at (6,0) {
\begin{tikzpicture}
\coordinate (a) at (0,0);
\coordinate (b) at (1,1);
\coordinate (c) at (-1,1);
\coordinate (d) at (-2,2);
\coordinate (e) at (-3,1);
\draw[] (b) -- (a) -- (c) -- (d) -- (e) ;
\node[fill=white,draw,circle,inner sep=.75ex] at (a) {2};
\node[fill=white,draw,circle,inner sep=.75ex] at (b) {3};
\node[fill=white,draw,circle,inner sep=.75ex] at (c) {3};
\node[fill=white,draw,circle,inner sep=.75ex] at (d) {5};
\node[fill=white,draw,circle,inner sep=.75ex] at (e) {1};
\end{tikzpicture}
};
\node at (12,0) {
\begin{tikzpicture}
\coordinate (a) at (0,0);
\coordinate (b) at (1,1);
\coordinate (c) at (-1,1);
\coordinate (d) at (-2,2);
\coordinate (e) at (-3,1);
\draw[] (b) -- (a) -- (c) -- (d) -- (e) ;
\node[fill=white,draw,circle,inner sep=.75ex] at (a) {2};
\node[fill=white,draw,circle,inner sep=.75ex] at (b) {3};
\node[fill=white,draw,circle,inner sep=.75ex] at (c) {3};
\node[fill=white,draw,circle,inner sep=.75ex] at (d) {5};
\node[fill=white,draw,circle,inner sep=.75ex] at (e) {1};
\end{tikzpicture}
};
\node at (12,-4) {
\begin{tikzpicture}
\coordinate (a) at (0,0);
\coordinate (b) at (1,1);
\coordinate (c) at (-1,1);
\coordinate (d) at (-2,2);
\coordinate (e) at (-3,1);
\draw[] (b) -- (a) -- (c) -- (d) -- (e) ;
\node[fill=white,draw,circle,inner sep=.75ex] at (a) {2};
\node[fill=white,draw,circle,inner sep=.75ex] at (b) {4};
\node[fill=white,draw,circle,inner sep=.75ex] at (c) {4};
\node[fill=white,draw,circle,inner sep=.75ex] at (d) {5};
\node[fill=white,draw,circle,inner sep=.75ex] at (e) {1};
\end{tikzpicture}
};
\node at (6,-4) {
\begin{tikzpicture}
\coordinate (a) at (0,0);
\coordinate (b) at (1,1);
\coordinate (c) at (-1,1);
\coordinate (d) at (-2,2);
\coordinate (e) at (-3,1);
\draw[] (b) -- (a) -- (c) -- (d) -- (e) ;
\node[fill=white,draw,circle,inner sep=.75ex] at (a) {2};
\node[fill=white,draw,circle,inner sep=.75ex] at (b) {5};
\node[fill=white,draw,circle,inner sep=.75ex] at (c) {4};
\node[fill=white,draw,circle,inner sep=.75ex] at (d) {5};
\node[fill=white,draw,circle,inner sep=.75ex] at (e) {1};
\end{tikzpicture}
};
\draw[thick,->] (2.5,0.5) -- (4,0.5) node[midway,above] {$\mbox{\huge $\rho_1$}$};
\draw[thick,->] (8.5,0.5) -- (10,0.5) node[midway,above] {$\mbox{\huge $\rho_2$}$};
\draw[thick,->] (12,-2) -- (12,-3) node[midway,right] {$\mbox{\huge $\rho_3$}$};
\draw[thick,->] (10,-3) -- (8.5,-3) node[midway,above] {$\mbox{\huge $\rho_4$}$};
\draw[thick,->] (0,-2) to [out=270,in=180] (2.5,-3);
\node at (.75,-3.5) {$\mbox{\huge $\incpro$}$};
\end{tikzpicture}
}
}
\caption{An increasing labeling in $\inc{5}{P}$ (which is $\inc{R}{P}$ with restriction function $R$ induced by requiring that all labels be in $\{1,2,3,4,5\}$) and the composition of generalized Bender-Knuth involutions giving $\incpro$.}
\label{fig:IncPro}
\end{figure}
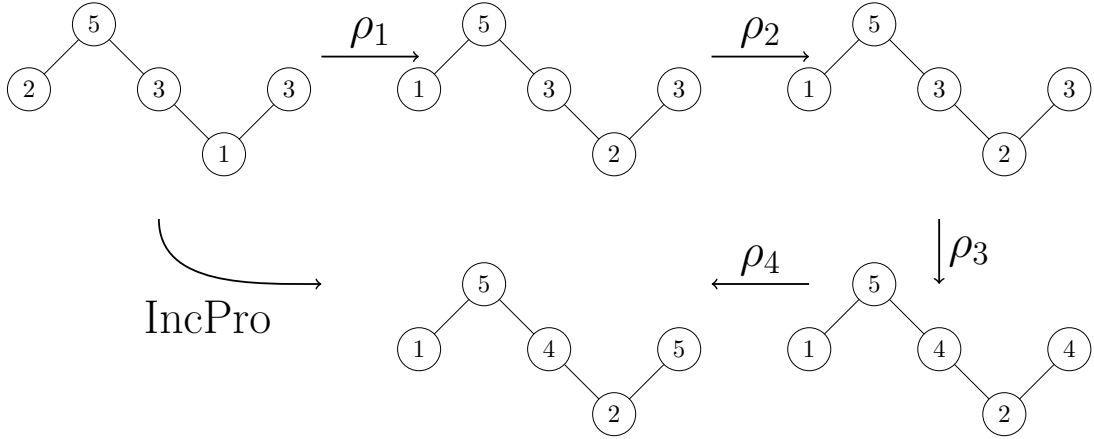

We define the toggle group action $\togpro_{H_{\Gamma}}$ below, using the following definitions. 
Given $p\in P$, its \emph{toggle} $t_p$ acting on an order ideal $I$ 
gives the symmetric difference of $p$ and $I$ if the resulting set is still an order ideal. For $p_1,p_2\in P$, $p_1\lessdot p_2$ means $p_2$ \emph{covers} $p_1$, in the sense that $p_1<p_2$ and there is no $p\in P$ such that $p_1<p<p_2$.

\begin{DefTogOrder}
%\label{TogOrder}
We say that a function $H:P\rightarrow \mathbb{Z}$ is a \emph{toggle order} if $p_1\lessdot p_2$ implies $H(p_1)\neq H(p_2)$. 
Given a toggle order $H$, define $T_H^i$ to be the toggle group action that is the product of all $t_p$ for $p\in P$ such that $H(p)=i$.
\end{DefTogOrder}

\begin{DefTogPro}
%\label{def:TogPro}
We say that \emph{toggle-promotion} with respect to a toggle order $H$, denoted $\togpro_H$, is the toggle group action given by \[ \ldots T_H^{-2}T_H^{-1}T_H^0 T_H^1 T_H^2\ldots \]
\end{DefTogPro}

\begin{MainResult4}
%\label{thm:proequalspro}
Let $H_{\Gamma}:\Gamma(P,R)\rightarrow\mathbb{Z}$ be the toggle order taking $(p,k)$ to $k$.
Then $\inc{R}{P}$ under $\incpro$ is in equivariant bijection with $J(\Gamma(P,R))$ under $\togpro_{H_{\Gamma}}$.
\end{MainResult4}

See Figure~\ref{fig:TogIsProEx_intro} and Example~\ref{ex:proequalspro}.

\smallskip
Our \textbf{third main result} generalizes results of \cite{SW2012} and \cite{DPS2015} to show the equivariance of {rowmotion} and toggle-promotion $\togpro_H$ on any poset  with a \emph{column toggle order} $H$ (see Definition~\ref{def:ColTog}).
\begin{MainResult3}
%\label{thm:newprorow}
Let $H$ be a column toggle order of $P$. Then the toggle group action $\togpro_{H}$ on $J(P)$ is conjugate to $\row$ on $J(P)$. 
\end{MainResult3}

In the special case of a column toggle order, we can use Theorem~\ref{thm:newprorow} to map equivariantly between increasing labeling promotion on $\inc{R}{P}$ and rowmotion on order ideals of $\Gamma(P,R)$. 
\begin{MainResult5}
%\label{thm:prorow}
When $H_{\Gamma}$ is a column toggle order,
there is an equivariant bijection between $\inc{R}{P}$ under $\incpro$ and order ideals in $\Gamma(P,R)$ under $\row$.
\end{MainResult5}

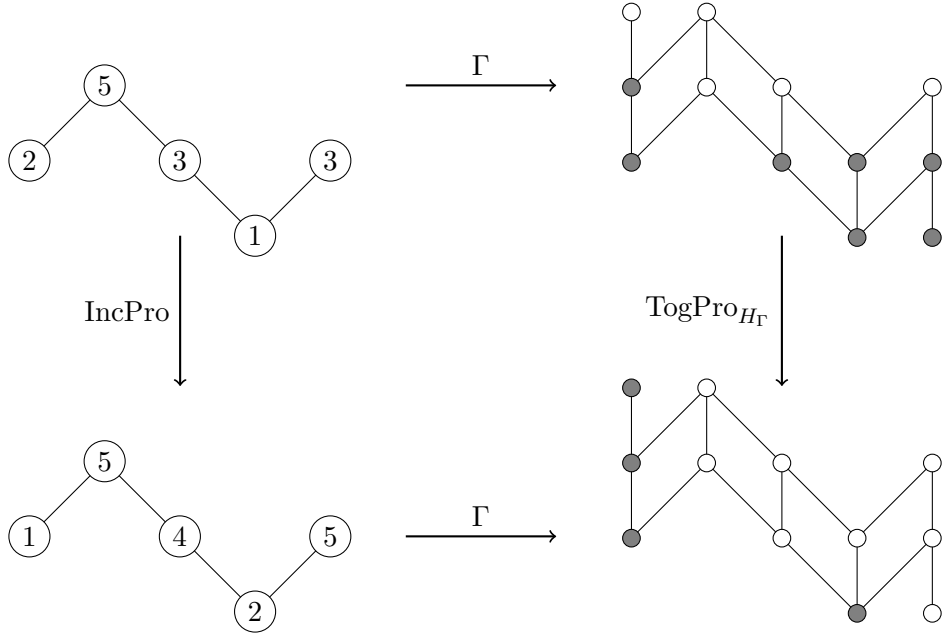
\begin{figure}[htbp]
\begin{tikzpicture}
\node at (8,0) {
\begin{tikzpicture}
\coordinate (a0) at (0,-1);
\coordinate (a) at (0,0);
\coordinate (a1) at (0,1);
\coordinate (d-1) at (1,-1);
\coordinate (d0) at (1,0);
\coordinate (d) at (1,1);
\coordinate (d1) at (1,2);
\coordinate (c0) at (-1,0);
\coordinate (c) at (-1,1);
\coordinate (c1) at (-1,2);
\coordinate (e0) at (-2,1);
\coordinate (e) at (-2,2);
\coordinate (e1) at (-2,3);
\coordinate (b0) at (-3,0);
\coordinate (b) at (-3,1);
\coordinate (b1) at (-3,2);
\coordinate (b2) at (-3,3);
\draw[] (d) -- (a) -- (c) -- (e) -- (b) ;
\draw[] (d1) -- (a1) -- (c1) -- (e1) -- (b1) ;
\draw (a1) -- (a);
\draw (d1) -- (d);
\draw (c1) -- (c);
\draw (e1) -- (e);
\draw (b1) -- (b);
\draw (d0) -- (d);
\draw (b2) -- (b1);
\node[inner sep=.5ex] at (a0) {};
\node[fill=gray,draw,circle,inner sep=.5ex] at (a) {};
\node[fill=white,draw,circle,inner sep=.5ex] at (a1) {};
\node[inner sep=.5ex] at (d-1) {};
\node[fill=white,draw,circle,inner sep=.5ex] at (d0) {};
\node[fill=white,draw,circle,inner sep=.5ex] at (d) {};
\node[fill=white,draw,circle,inner sep=.5ex] at (d1) {};
\node[inner sep=.5ex] at (c0) {};
\node[fill=white,draw,circle,inner sep=.5ex] at (c) {};
\node[fill=white,draw,circle,inner sep=.5ex] at (c1) {};
\node[inner sep=.5ex] at (e0) {};
\node[fill=white,draw,circle,inner sep=.5ex] at (e) {};
\node[fill=white,draw,circle,inner sep=.5ex] at (e1) {};
\node[inner sep=.5ex] at (b0) {};
\node[fill=gray,draw,circle,inner sep=.5ex] at (b) {};
\node[fill=gray,draw,circle,inner sep=.5ex] at (b1) {};
\node[fill=gray,draw,circle,inner sep=.5ex] at (b2) {};
\end{tikzpicture}
};
\node at (8,5) {
\begin{tikzpicture}
\coordinate (a0) at (0,-1);
\coordinate (a) at (0,0);
\coordinate (a1) at (0,1);
\coordinate (d-1) at (1,-1);
\coordinate (d0) at (1,0);
\coordinate (d) at (1,1);
\coordinate (d1) at (1,2);
\coordinate (c0) at (-1,0);
\coordinate (c) at (-1,1);
\coordinate (c1) at (-1,2);
\coordinate (e0) at (-2,1);
\coordinate (e) at (-2,2);
\coordinate (e1) at (-2,3);
\coordinate (b0) at (-3,0);
\coordinate (b) at (-3,1);
\coordinate (b1) at (-3,2);
\coordinate (b2) at (-3,3);
\draw[] (d) -- (a) -- (c) -- (e) -- (b) ;
\draw[] (d1) -- (a1) -- (c1) -- (e1) -- (b1) ;
\draw (a1) -- (a);
\draw (d1) -- (d);
\draw (c1) -- (c);
\draw (e1) -- (e);
\draw (b1) -- (b);
\draw (d0) -- (d);
\draw (b2) -- (b1);
\node[inner sep=.5ex] at (a0) {};
\node[fill=gray,draw,circle,inner sep=.5ex] at (a) {};
\node[fill=gray,draw,circle,inner sep=.5ex] at (a1) {};
\node[inner sep=.5ex] at (d-1) {};
\node[fill=gray,draw,circle,inner sep=.5ex] at (d0) {};
\node[fill=gray,draw,circle,inner sep=.5ex] at (d) {};
\node[fill=white,draw,circle,inner sep=.5ex] at (d1) {};
\node[inner sep=.5ex] at (c0) {};
\node[fill=gray,draw,circle,inner sep=.5ex] at (c) {};
\node[fill=white,draw,circle,inner sep=.5ex] at (c1) {};
\node[inner sep=.5ex] at (e0) {};
\node[fill=white,draw,circle,inner sep=.5ex] at (e) {};
\node[fill=white,draw,circle,inner sep=.5ex] at (e1) {};
\node[inner sep=.5ex] at (b0) {};
\node[fill=gray,draw,circle,inner sep=.5ex] at (b) {};
\node[fill=gray,draw,circle,inner sep=.5ex] at (b1) {};
\node[fill=white,draw,circle,inner sep=.5ex] at (b2) {};
\end{tikzpicture}
};
\node at (0,0) {
\begin{tikzpicture}
\coordinate (a) at (0,0);
\coordinate (b) at (1,1);
\coordinate (c) at (-1,1);
\coordinate (d) at (-2,2);
\coordinate (e) at (-3,1);
\draw[] (b) -- (a) -- (c) -- (d) -- (e) ;
\node[fill=white,draw,circle,inner sep=.5ex] at (a) {2};
\node[fill=white,draw,circle,inner sep=.5ex] at (b) {5};
\node[fill=white,draw,circle,inner sep=.5ex] at (c) {4};
\node[fill=white,draw,circle,inner sep=.5ex] at (d) {5};
\node[fill=white,draw,circle,inner sep=.5ex] at (e) {1};
\end{tikzpicture}
};
\node at (0,5) {
\begin{tikzpicture}
\coordinate (a) at (0,0);
\coordinate (b) at (1,1);
\coordinate (c) at (-1,1);
\coordinate (d) at (-2,2);
\coordinate (e) at (-3,1);
\draw[] (b) -- (a) -- (c) -- (d) -- (e) ;
\node[fill=white,draw,circle,inner sep=.5ex] at (a) {1};
\node[fill=white,draw,circle,inner sep=.5ex] at (b) {3};
\node[fill=white,draw,circle,inner sep=.5ex] at (c) {3};
\node[fill=white,draw,circle,inner sep=.5ex] at (d) {5};
\node[fill=white,draw,circle,inner sep=.5ex] at (e) {2};
\end{tikzpicture}
};
\draw[thick,->] (3,0) -- (5,0) node[midway,above] {$\Gamma$};
\draw[thick,->] (3,6) -- (5,6) node[midway,above] {$\Gamma$};
\draw[thick,->] (0,4) -- (0,2) node[midway,left] {$\incpro$};
\draw[thick,->] (8,4) -- (8,2) node[midway,left] {$\togpro_{H_{\Gamma}}$};
\end{tikzpicture}
\caption{Correspondence between increasing labelings with labels at most 5 under $\incpro$ and order ideals in the associated $\Gamma$ poset under $\togpro_{H_{\Gamma}}$.}
\label{fig:ComDiag}
\end{figure}

Several of our main results given above specialize nicely in the case of
increasing labelings with labels restricted to the range $\{1,\ldots,q\}$, which we denote as 
$\inc{q}{P}$. 
%We begin by noting how the definition of $\Gamma(P,R)$ specializes. 
Let $\Gamma(P,q)$ be the poset $\Gamma(P,R)$ for the induced restriction function.
The following is a corollary of Theorem~\ref{thm:prorow}.
\begin{MainResult8}
There is an equivariant bijection between $\inc{q}{P}$ under $\incpro$ and order ideals in $\Gamma(P,q)$ under $\row$.
\end{MainResult8}

This result is a generalization of work of O.\ Pechenik with the first and second authors giving an equivariant bijection between \emph{$K$-promotion} on \emph{increasing tableaux} and rowmotion on order ideals of the product of three chains poset~\cite{DPS2015}. We give more details on this in Subsection~\ref{sec:background}. (See Figure~\ref{fig:ComDiag})

\smallskip
Another case where we may define a column toggle order to obtain a toggle group action conjugate to rowmotion arises from Cartesian products; see Subsection~\ref{sec:app} for relevant definitions. The following is a corollary of Theorem~\ref{thm:newprorow}.

\begin{MainResult9}
Let $P$ be a ranked poset with a Cartesian embedding into ranked posets $(P_1,P_2)$. Let $H$ map the element of $P$ embedded at coordinate $(p_1,p_2)$ to the difference of ranks $\mathrm{rk}_{P_1}(p_1) - \mathrm{rk}_{P_2}(p_2)$. Then $\togpro_H$ on $J(P)$ is conjugate to $\row$ on $J(P)$. 
\end{MainResult9}

Finally, in the case of $\inc{q}{P}$, we also define an analogue of  \emph{jeu de taquin} promotion in Definition~\ref{def:jdt}, which we denote as $\jdtpro$. The \textbf{last main result} we mention here shows this is the same action as $\incpro$.

\begin{MainResult2}
%\label{thm:bk=jdt}
For $f\in\inc{q}{P}$, $\incpro(f)=\jdtpro(f)$. 
\end{MainResult2}

We apply this correspondence to prove the following instance of the \emph{resonance} phenomenon, which, informally speaking, occurs when an action projects to a cyclic action, generally of smaller order. This is an analogue of \cite[Theorem 2.2]{DPS2015} in the case of increasing tableaux. See the next subsection for the precise definition of resonance and a discussion of the analogous result from \cite{DPS2015}. Let $\mathrm{Con}(f)$ 
denote the \emph{binary content} of $f\in\inc{q}{P}$, 
that is, the length $q$ vector whose $i$th entry is $1$ if $f(p)=i$ for some $p\in P$ and $0$ otherwise. 

\begin{MainResult6}
$(\inc{q}{P},\langle\incpro\rangle,\mbox{\rm Con})$ exhibits resonance with frequency $q$.
\end{MainResult6}

This corollary and Corollary~\ref{cor:prorow} together imply a new resonance result on rowmotion as well. 

\begin{MainResult7}
%\label{cor:rowres}
Let $\varphi$ denote the map from an order ideal in $\Gamma(P,q)$ to the corresponding increasing labeling on $P$, and let $d$ be the toggle group element conjugating $\row$ to $\togpro_{H_{\Gamma}}$. Then
$(J(\Gamma(P,q)),\langle\row\rangle,\mbox{\rm Con}\circ\varphi\circ d)$ exhibits resonance with frequency~$q$. 
\end{MainResult7}

\textbf{The paper is structured as follows}. In Subsection~\ref{subsec:gamma1}, we construct $\Gamma(P,R)$ and prove Theorem~\ref{thm:Gamma1MeetIrred} relating $\inc{R}{P}$ and $J(\Gamma(P,R))$. In Subsection~\ref{subsec:q}, we restrict our attention to $\inc{q}{P}$, and in Subsection~\ref{subsec:ranked}, we discuss weakly increasing labelings and prove an analogue of Theorem~\ref{thm:Gamma1MeetIrred} in this setting.
In Section~\ref{sec:Promotion}, we define $\jdtpro$ and prove Theorem~\ref{thm:bk=jdt} relating $\jdtpro$ and $\incpro$ as well as Corollary~\ref{thm:res} on resonance. In Section~\ref{sec:eq_bij}, we establish the equivariance of the bijection between increasing labelings under $\incpro$ and the corresponding order ideals under rowmotion. In Subsection~\ref{row=pro}, we prove Theorem~\ref{thm:newprorow} on the conjugacy of toggle-promotion $\togpro_H$ and rowmotion. In Subsection~\ref{sec:app} we apply Theorem~\ref{thm:newprorow} to $\Gamma(P,R)$ and Cartesian embeddings, yielding Corollary~\ref{cor:cartesian}. In Subsection~\ref{pro=pro}, we prove Theorem~\ref{thm:prorow} and Corollaries~\ref{cor:prorow} and~\ref{cor:rowres}. Finally, in Section~\ref{sec:converse}, we discuss an example in which we can give an inverse to Theorem~\ref{thm:prorow}.

But first, in the next subsection, we put our results in context by giving a brief history of motivating and analogous work.

\subsection{A brief history of promotion and rowmotion}
\label{sec:background}
In this section, we give some background on promotion and rowmotion with a focus on results that give realms in which these maps correspond to each other. This paper generalizes these results to a broader setting.

\subsubsection{Promotion on standard Young tableaux and linear extensions}
Promotion is a natural action defined by M.-P.\ Sch\"utzenberger on standard Young tableaux and, more generally, linear extensions of finite poset \cite{Sch1972}, arising from study of evacuation and the RSK correspondence. A \emph{linear extension} of a poset $P$ is a bijective function $f:P\rightarrow \{1,\ldots,n\}$ where $|P|=n$ such that if $p_1< p_2$ in $P$ then $f(p_1)< f(p_2)$. In the notation of this paper, a linear extension is an increasing labeling in $\inc{n}{P}$ such that each label value is used exactly once. Let $\mathcal{L}(P)$ denote the set of linear extensions of $P$.  Suppose $f\in\mathcal{L}(P)$, then the \emph{promotion} of $f$, denoted $\pro(f)$, is found as follows. We begin by deleting the label 1. We then slide down the smallest label of all covers of the now unlabeled element to replace the removed label $1$; this is called a \emph{jeu de taquin} slide. This jeu de taquin sliding process continues with the new unlabeled element until the unlabeled element is maximal; we then label this with $n+1$. By subtracting 1 from every label, we obtain a new linear extension, which is $\pro(f)$.

This is not the only way to view promotion on a linear extension; it can also be defined using a sequence of involutions, which are a special case of involutions introduced by E.~Bender and D.~Knuth on semistandard Young tableaux~\cite{BK1972}. Let the action of the \emph{$i$th Bender-Knuth involution} $\rho_i$ on $f\in\mathcal{L}(P)$ be as follows: swap the labels $i$ and $i+1$ if the result is a linear extension, otherwise do nothing. 
Then $\pro(f)=\rho_{n-1}\circ\cdots\circ\rho_2\circ\rho_1$. 
In the notation of this paper, if $f\in\inc{n}{P}$ is also in $\mathcal{L}(P)$, our generalized Bender-Knuth involutions of Definition~\ref{def:bkR} reduce to these Bender-Knuth involutions and $\incpro$ equals $\pro$.
Promotion has many beautiful properties and  significant applications in representation theory. 
See R.\ Stanley's survey \cite{Stanley2009} for many of these properties, including further history and details on promotion via Bender-Knuth involutions.

\subsubsection{Rowmotion and the toggle group}
Rowmotion is an action originally defined on hypergraphs by P.~Duchet \cite{Duchet1974} and generalized to order ideals of an arbitrary finite poset by A.\ Brouwer and A.\ Schrijver~\cite{BS1974}. Given $I\in J(P)$, \emph{rowmotion} on $I$, denoted $\row(I)$, is the order ideal generated by the minimal elements of $P\setminus I$.  Rowmotion has recently generated significant interest as a prototypical action in the emerging subfield of dynamical algebraic combinatorics; see \cite{SW2012} for a detailed history and \cite{BR2011, Brouwer1975, CHHM2017, DF1990, DPS2015, EP2013, EP2014, GP2017, GR2015, GR2016, Joseph2017, Panyushev2009, PR2015, Rush2016, RS2013, RW2015, Striker2015, Striker2016, Striker2017, Vorland2017} for more recent developments.  

In \cite{BS1974}, Brouwer and Schrijver studied the order of rowmotion on a product of two chains poset, $\mathbf{a} \times \mathbf{b}$, discovering that $J(\mathbf{a} \times \mathbf{b})$ has order $a+b$ under rowmotion. D.\ Fon-der-Flaass refined this further with a result on the length of any orbit of $J(\mathbf{a} \times \mathbf{b})$ under rowmotion \cite{Fon-der-Flaass1993}. In \cite{Stanley2009}, Stanley showed there exists an equivariant bijection between linear extensions of two disjoint chains $\mathbf{a} \oplus \mathbf{b}$ (or equivalently, standard Young tableaux of disjoint skew shape) under promotion and $J(\mathbf{a} \times \mathbf{b})$ under rowmotion.

\begin{theorem}[\protect{\cite{Stanley2009}}]
\label{thm:skew}
$J\left(\mathbf{a} \times \mathbf{b}\right)$ under $\row$ is in equivariant bijection with $\mathcal{L}\left(\mathbf{a} \oplus \mathbf{b}\right)$ under $\pro$.
\end{theorem}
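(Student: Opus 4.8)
The plan is to pass both sides to a common combinatorial model and show that promotion and rowmotion each act as a cyclic rotation there. A linear extension $f\in\mathcal{L}(\mathbf{a}\oplus\mathbf{b})$ is determined by recording, for each label $i\in\{1,\dots,a+b\}$, whether $f^{-1}(i)$ lies on the first chain or the second; this yields a word $w(f)=w_1\cdots w_{a+b}$ in letters $\{x,y\}$ with exactly $a$ letters $x$ and $b$ letters $y$, and every such word arises uniquely. On the other side, an order ideal $I\in J(\mathbf{a}\times\mathbf{b})$ is recorded by its boundary lattice path, again a word with $a$ steps of one kind and $b$ of the other. Both encodings are bijections onto the same set of $\binom{a+b}{a}$ words, so it suffices to show that under these encodings $\pro$ and $\row$ become the same rotation map (up to a fixed relabeling of the alphabet).

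First I would treat the promotion side, which is the easy direction. In $\mathbf{a}\oplus\mathbf{b}$ the two chains are incomparable, so the jeu de taquin slide computing $\pro(f)$ never crosses between chains: after deleting the label $1$, the vacancy travels up whichever chain contained the minimum to that chain's top, where $a+b$ is inserted, after which all labels are decremented. Tracking which chain each value lands on shows that the new word is $w_2w_3\cdots w_{a+b}w_1$; that is, $w(\pro(f))$ is the cyclic left shift of $w(f)$, the appended letter equalling $w_1$ precisely because the slide stays within the chain holding the minimum. Hence $\pro$ is literally rotation in the word model.

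The main work is the rowmotion side. Here I would use the toggle-group description of rowmotion: by Cameron–Fon-der-Flaass, $\row$ is the product of all element toggles of $\mathbf{a}\times\mathbf{b}$ applied in the order of a linear extension. Choosing the linear extension that proceeds along the antidiagonals of the rectangle, one checks that each sweep of toggles advances the boundary path by one unit of rotation; equivalently, one can invoke Fon-der-Flaass's orbit analysis, which already identifies the $\row$-orbits on $J(\mathbf{a}\times\mathbf{b})$ with the rotation orbits of these words and in particular gives order $a+b$. Either route shows $\row$ is conjugate to rotation of the boundary word. I expect this to be the principal obstacle: not the existence of the rotation, but aligning all conventions—French versus English diagram orientation, the direction in which the boundary path is read, and the direction of the promotion shift—so that the two rotations turn the same way.

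Finally I would combine the two halves. Once $\pro$ on words and $\row$ on words are identified as the same cyclic map, composing the two encoding bijections produces a single bijection $\mathcal{L}(\mathbf{a}\oplus\mathbf{b})\to J(\mathbf{a}\times\mathbf{b})$ intertwining $\pro$ with $\row$, which is exactly the asserted equivariant bijection; as a consistency check, both actions then have order $a+b$, recovering the Brouwer–Schrijver and Fon-der-Flaass computations. If literal equality of the two rotations proves awkward to arrange with a single encoding, it suffices to note that conjugate cyclic actions always admit an equivariant bijection, so the conjugator furnished by the toggle-group argument can be absorbed into the encoding map without affecting the conclusion.
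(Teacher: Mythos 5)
This statement is quoted from Stanley's survey as background; the paper gives no proof of it, so there is nothing internal to compare against except the machinery it later develops (column toggle orders and Theorem~\ref{thm:newprorow}), which turns out to be precisely the ingredient your argument is missing. Your promotion half is correct and complete: the two chains of $\mathbf{a}\oplus\mathbf{b}$ are incomparable, so the jeu de taquin vacancy never leaves the chain that held the label $1$, and $\pro$ is literally the cyclic left shift of the word recording which chain carries each label.

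The gap is in the rowmotion half. It is not true that rowmotion --- i.e.\ toggling along the antidiagonals (the ranks) from top to bottom --- advances the boundary lattice path by one unit of rotation, under any reading convention. Already in $\mathbf{2}\times\mathbf{2}$: writing order ideals as partitions in a $2\times2$ box, the size-$4$ orbit of $\row$ is $\emptyset\to(1)\to(2,1)\to(2,2)\to\emptyset$; the ideals $(1)$ and $(2,1)$ have the two \emph{alternating} boundary words, which under rotation form their own orbit of size $2$, while $\emptyset$ and $(2,2)$ have boundary words of rotation-period $4$ --- so this $\row$-orbit cannot be a rotation class of boundary words. What is true is that toggling by \emph{columns} (the files $i-j=\mathrm{const}$, a column toggle order in the sense of Definition~\ref{def:ColTog}) rotates the boundary word; that operation is order-ideal promotion in the sense of \cite{SW2012}, not rowmotion, and one must then invoke the toggle-group conjugacy of column-toggling with rowmotion (Theorem~\ref{thm:newprorow}, or \cite[Lemma 5.1]{SW2012}) to conclude. (The word on which $\row$ itself acts by rotation is the Stanley--Thomas word, which is not the boundary path.) Your fallback --- same cycle type implies an equivariant bijection, with the cycle type supplied by Fon-der-Flaass --- is logically sound, but it imports the full orbit-structure theorem, which is essentially the content to be proved; as written, the concrete mechanism fails and the conclusion is rescued only by citation. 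The repair is to replace ``antidiagonals'' by ``columns'' and add the conjugation step.
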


Another instance where promotion on linear extensions and rowmotion are related is an equivariant bijection between linear extensions of the product of two chains poset $\mathbf{2} \times \mathbf{n}$ 
under promotion (or alternatively, rectangular, two-row standard Young tableaux under promotion) and order ideals of the type $A_{n-1}$ positive root poset $\Phi^+(A_{n-1})$ under rowmotion. This is a restatement of the Type $A$ case of a result of D.\ Armstrong, C.\ Stump, and H.\ Thomas in \cite{AST2013}. 

\begin{theorem}[\protect{see \cite[Theorem 3.10]{SW2012}}]
\label{thm:catalan}
$J\left(\Phi^+(A_{n-1})\right)$ under $\row$ is in equivariant bijection with $\mathcal{L}\left(\mathbf{2} \times \mathbf{n}\right)$ under $\pro$.
\end{theorem}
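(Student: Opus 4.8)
The plan is to build an explicit bijection and show it intertwines the two actions by routing everything through the toggle group, following the strategy of \cite{SW2012}. First I would fix the classical identification of $\mathcal{L}(\mathbf{2}\times\mathbf{n})$ with standard Young tableaux of rectangular shape with two rows and $n$ columns. To such a tableau I would associate a ballot sequence (equivalently a Dyck path of semilength $n$) by recording, for each label $1,\dots,2n$ in turn, whether it lies in the top or the bottom row; the row-strict and column-strict conditions guarantee the ballot property. Reading this path as the boundary of a staircase region identifies it with an order ideal of the triangular poset $\Phi^+(A_{n-1})$, giving a map $\Psi:\mathcal{L}(\mathbf{2}\times\mathbf{n})\to J(\Phi^+(A_{n-1}))$. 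I would check $\Psi$ is a bijection by exhibiting an explicit inverse, noting as a sanity check that both sides are enumerated by $C_n$.

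The heart of the argument is to track the Bender-Knuth involutions through $\Psi$. I would prove that $\rho_i$, which swaps the labels $i$ and $i+1$ whenever the result is again a linear extension, corresponds under $\Psi$ to toggling the elements of $\Phi^+(A_{n-1})$ lying on a single fixed antichain (a diagonal/file) $D_i$ determined by $i$. The key local fact is that $i$ and $i+1$ are swappable exactly when they occupy a corner of the associated path, and flipping that corner adds or removes precisely one box of the order ideal along $D_i$; the cases where the swap does nothing must coincide with the cases where every toggle on $D_i$ is blocked by the order-ideal condition. Since the elements of $D_i$ form an antichain, their toggles commute and at most one is active, so $\rho_i$ agrees exactly with the diagonal toggle $T^{D_i}$. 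Composing, $\pro=\rho_{2n-1}\circ\cdots\circ\rho_1$ is transported by $\Psi$ to the toggle group element firing the diagonals $D_1,\dots,D_{2n-1}$ in order, i.e. a toggle-promotion with respect to the toggle order sending each element of $\Phi^+(A_{n-1})$ to its diagonal index.

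Finally I would show this diagonal toggle-promotion is conjugate to $\row$ in the toggle group of $J(\Phi^+(A_{n-1}))$. Using the Cameron--Fon-der-Flaass description of rowmotion as the product of all toggles taken in a linear-extension order, this reduces to conjugating the diagonal firing order to a rank firing order by a fixed toggle-group element: this is exactly the mechanism of \cite{SW2012} for posets carrying a two-dimensional lattice projection, and $\Phi^+(A_{n-1})$ carries such a projection via its planar staircase embedding. Since conjugation preserves orbit structure, $\Psi$ then carries $\pro$ to a conjugate of $\row$, and composing $\Psi$ with the conjugating element yields the desired equivariant bijection.

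I expect the main obstacle to be the second step: pinning down the exact diagonal $D_i$ and verifying the box-level dictionary ``swap $i\leftrightarrow i+1$'' $\leftrightarrow$ ``toggle $D_i$'' in all boundary cases, since the swaps that act trivially must be matched precisely against the toggles blocked by the order-ideal condition. Once this local dictionary is in place, the passage to rowmotion is a formal consequence of the toggle-group framework of \cite{SW2012}.
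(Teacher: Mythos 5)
The paper offers no proof of this statement---it is quoted as background, attributed to \cite[Theorem 3.10]{SW2012} and to Armstrong--Stump--Thomas---and your proposal reconstructs essentially the argument of that source, which is also the template the present paper generalizes in Section~\ref{sec:eq_bij}: transport the Bender--Knuth involutions across the Dyck-path bijection to file-by-file toggles of $\Phi^+(A_{n-1})$, then conjugate the file-firing order to a linear-extension firing order (hence to $\row$, via Theorem~\ref{thm:linextrow}) using Lemma~\ref{lem:conjugatelem}. One correction to your second step: the files $D_i$ of $\Phi^+(A_{n-1})$ are chains, not antichains (for $n\geq 4$ the roots $e_2-e_3$ and $e_1-e_4$ lie in the same file and are comparable); the property you actually need is that no two elements of a file form a covering relation---true because every cover changes the file index by $\pm 1$---which by Remark~\ref{rmk:ToggleCommute} still makes the toggles within a file commute, so the rest of your argument goes through unchanged.
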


N.~Williams and the second author proved a general result~\cite{SW2012} relating promotion and rowmotion which recovers Theorems~\ref{thm:skew} and \ref{thm:catalan} as special cases. They used the \emph{toggle group} of P.~Cameron and D.~Fon-der-Flaass, which we describe below.

\begin{definition}
For any $p \in P$, the \textit{toggle} $t_p: J(P) \rightarrow J(P)$ is defined as follows:
\[ 
t_p(I)=
   \begin{cases} 
      I \cup \{ p \} \qquad & \text{if } p \notin I \text{ and } I \cup \{ p \} \in J(P) \\
      I \setminus \{ p \} &\text{if } p \in I \text{ and } I \setminus \{ p \} \in J(P) \\
      I &\text{otherwise.}
   \end{cases}
\]
The \emph{toggle group} of $P$ is the group generated by the $t_p$ for all $p\in P$.
\end{definition}

\begin{remark}(\protect{\cite[p.\ 546]{CF1995}})
\label{rmk:ToggleCommute}
The toggles $t_{p_1}$ and $t_{p_2}$ commute whenever neither $p_1$ nor $p_2$ covers the other.
\end{remark}

Cameron and Fon-der-Flaass showed a connection between rowmotion and toggling. Specifically, rowmotion can be performed by toggling each element of a poset from top to bottom, that is, in the reverse order of any linear extension. If the poset is ranked, this is equivalent to toggling top to bottom by ranks, or rows.

\begin{theorem}[\protect{\cite[Lemma 1]{CF1995}}]
\label{thm:linextrow}
Let $f\in\mathcal{L}(P)$.  Then $t_{f^{-1}(1)}t_{f^{-1}(2)} \cdots t_{f^{-1}(n)}$ acts as $Row$.
\end{theorem}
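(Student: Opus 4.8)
The plan is to process the toggles in the prescribed order---namely top to bottom, first applying $t_{f^{-1}(n)}$ (a maximal element) and last applying $t_{f^{-1}(1)}$ (a minimal element)---and to prove by induction along this sequence that each element arrives at exactly its rowmotion value. Writing $p_k = f^{-1}(k)$ and letting $I'$ denote the current order ideal just before $p_k$ is toggled, the crucial structural observation is that since $f$ is a linear extension, every element strictly above $p_k$ has a larger label and has therefore already been toggled, while every element strictly below $p_k$ has a smaller label and has not yet been toggled. Because a single toggle only alters the membership of the toggled element, the inductive claim I would carry is: after $t_{p_k}$ is applied, the elements $p_k, p_{k+1}, \ldots, p_n$ have their final ($\row(I)$) membership, and $p_1, \ldots, p_{k-1}$ still retain their original ($I$) membership. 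In particular, when $p_k$ is about to be toggled, all of its covers sit in their $\row(I)$ state and every element it covers sits in its $I$ state.

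The key ingredient I would isolate first is a local recursive description of rowmotion: for any $p$, one has $p \in \row(I)$ if and only if either some cover $q \gtrdot p$ lies in $\row(I)$, or else $p \notin I$ and every $q \lessdot p$ lies in $I$. This follows directly from the definition of $\row(I)$ as the order ideal generated by the minimal elements of $P \setminus I$: the second alternative says precisely that $p$ is itself minimal in $P \setminus I$ (a generator), and the first says $p$ lies below such a generator via a saturated chain passing through a cover.

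With this in hand, I would verify the inductive step by comparing the toggle rule for $p = p_k$ against the local characterization, splitting on whether $p \in I$. If $p \in I$, then $t_p$ removes $p$ exactly when no cover of $p$ currently lies in $I'$, i.e. (by the inductive hypothesis) when no cover lies in $\row(I)$; this matches the characterization, since for $p \in I$ the only route into $\row(I)$ is through a cover. If $p \notin I$, then because $I'$ is genuinely an order ideal at every stage (toggles preserve this), no cover of $p$ can be in $I'$, hence none is in $\row(I)$; thus $t_p$ adds $p$ exactly when all elements $p$ covers lie in $I'$, which are in their original $I$ state, matching the ``$p \notin I$ and all $q \lessdot p$ in $I$'' alternative. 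Either way $p$ lands in its $\row(I)$ state, completing the step; iterating through $k = 1$ shows the full product sends $I$ to $\row(I)$.

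The step I expect to be the main obstacle is the careful bookkeeping underlying the induction: establishing that at the moment $p_k$ is toggled its covers have already been frozen into their correct final values (not merely toggled, but toggled to the rowmotion value) while the elements it covers are untouched, and confirming that the order-ideal property is maintained throughout, so that the ``$p \notin I$ forces no cover present'' consistency genuinely holds. The local characterization of rowmotion is the conceptual heart, but once stated it is routine; the delicacy is entirely in sequencing the toggles and checking that the two regimes---final state above $p$, original state below $p$---interact with $t_p$ exactly as the characterization predicts.
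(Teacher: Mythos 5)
Your proof is correct. Note that the paper itself gives no proof of this statement: it is imported verbatim from Cameron and Fon-der-Flaass \cite[Lemma 1]{CF1995}, so there is nothing in the source to compare against line by line. Your argument is a complete and self-contained justification and is essentially the standard one: the local characterization ($p \in \row(I)$ iff some cover of $p$ is in $\row(I)$, or $p \notin I$ and every element covered by $p$ is in $I$) is exactly the right reformulation of ``order ideal generated by the minimal elements of $P\setminus I$,'' and your invariant --- elements with larger label already frozen at their $\row(I)$ value, elements with smaller label still at their $I$ value --- is precisely what the linear extension hypothesis guarantees, since each toggle changes only the toggled element and every element comparable to $p_k$ falls on the correct side of the split. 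The two case checks against the toggle definition (reducing ``$I'\cup\{p\}$ is an ideal'' and ``$I'\setminus\{p\}$ is an ideal'' to conditions on covers, using that $I'$ remains an order ideal throughout) are all verified correctly, so there is no gap.
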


In~\cite{SW2012}, N.~Williams and the second author constructed a {toggle group action} that corresponds to linear extension promotion in the special cases of Theorems~\ref{thm:skew} and \ref{thm:catalan}; they named this toggle group action \emph{(order ideal-) promotion} because of this correspondence. Order ideal promotion first requires projecting to a two-dimensional lattice and defining \emph{columns}; promotion toggles poset elements from left to right by column. 
In the following theorem, they showed that order ideal promotion and rowmotion are conjugate elements in the toggle group, and thus have the same orbit structure.

\begin{theorem}[\protect{\cite[Theorem 5.2]{SW2012}}]
\label{thm:ProRowEq}
For any poset $P$ with a {two-dimensional lattice projection} (in particular, any ranked poset), there is an equivariant bijection between $J(P)$ under order ideal promotion and $J(P)$ under $Row$.
\end{theorem}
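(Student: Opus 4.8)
The plan is to prove the stronger statement that order ideal promotion $\pro$ and $\row$ are \emph{conjugate} elements of the toggle group of $P$; conjugacy is exactly what is needed, since if $\pro = c\,\row\,c^{-1}$ for a toggle group element $c$, then $c$ itself is a bijection $J(P)\to J(P)$ satisfying $c\circ\row = \pro\circ c$, carrying $\row$-orbits to $\pro$-orbits and intertwining the two actions. Thus the entire content reduces to an identity in the toggle group, and I would never need to track individual order ideals beyond knowing the generators are involutions.

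Both operators are expressible as products in which each generator $t_p$ appears exactly once. For $\row$ this is Theorem~\ref{thm:linextrow}: toggling in the reverse order of any linear extension (top to bottom) yields $\row$, and since toggles of non-covering elements commute (Remark~\ref{rmk:ToggleCommute}), the order within a rank is irrelevant. For order ideal promotion, the two-dimensional lattice projection partitions $P$ into \emph{columns}, and $\pro$ is the product of the column-toggle operators applied left to right; because any cover relation $p\lessdot q$ changes the column coordinate, no two elements of a single column cover one another, so each column-toggle is a well-defined commuting product and its internal order is irrelevant as well. Hence $\row$ and $\pro$ are both ``full'' toggle words, differing only in the order in which the $t_p$ are multiplied.

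I would then reduce to pure word combinatorics using two moves. First, interchanging adjacent letters $t_p t_q$ with $p,q$ non-covering leaves the group element unchanged (Remark~\ref{rmk:ToggleCommute}). Second, since each toggle is an involution, cyclically rotating a full toggle word by moving its first letter to the end is a conjugation: writing $w=t_{p_1}\cdots t_{p_n}$, we have $t_{p_2}\cdots t_{p_n}t_{p_1} = t_{p_1}\,w\,t_{p_1}$. Consequently, any full toggle word reachable from the $\row$-word by commuting swaps (which preserve the element) and cyclic rotations (which conjugate it) is conjugate to $\row$. The goal becomes to transform the column-ordered $\pro$-word into a reverse-linear-extension $\row$-word using only these moves, and to record the accumulated rotations as the explicit conjugator $c$.

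The heart of the argument, and the main obstacle, is controlling cover relations across columns so that every reordering invoked is legal. Using the projection I would assign coordinates $(\pi_1(p),\pi_2(p))$ to each $p$, with columns the level sets of $\pi_1$, and exploit the geometric constraint that a cover relation moves between adjacent columns in a prescribed way. I would then induct on the columns, sweeping one at a time and using commuting swaps to move a column's toggles into their correct position in a top-to-bottom (reverse linear extension) sweep whenever the elements being passed are non-covering; the transpositions that would involve a genuine cover relation occur only at the column boundaries, and these are precisely the ones absorbed by a cyclic rotation, contributing to $c$ rather than being performed as swaps. The delicate bookkeeping, where essentially all the work lies, is verifying case by case that the projection geometry forbids a cover exactly when a swap is needed, so that no illegal transposition is ever used and the induction closes.
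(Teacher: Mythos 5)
Your high-level framing is the same as the paper's: both $\togpro$ and $\row$ are products in which every toggle appears once, toggles are involutions, non-covering toggles commute, and conjugacy in the toggle group is exactly the equivariance one wants. But the paper (in its generalization, Theorem~\ref{thm:newprorow}) does \emph{not} work at the level of individual letters. It packages the toggles into column operators $T_H^i$ and into layer operators $R_H^i$ (built from successive sets of minimal elements), observes that each family satisfies $g_i^2=1$ and $(g_ig_j)^2=1$ for $|i-j|>1$, and invokes the reordering lemma of Humphreys (Lemma~\ref{lem:conjugatelem}) to conclude that any two orderings of either family are conjugate. The bridge between the two families is the element $\mathrm{Gyr}_H$ (toggle all even columns, then all odd columns) together with the parity lemma (Lemma~\ref{lem:parity}): an element lies in layer $L_i$ and column $j$ only if $i\equiv j\pmod 2$, so ``evens then odds'' means the same thing whether you group by layers or by columns. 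Hence $\togpro_H$ and $\row$ are each conjugate to $\mathrm{Gyr}_H$, and therefore to each other.

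The gap in your proposal is precisely where you defer to ``delicate bookkeeping.'' Your two moves are commuting swaps (which preserve the element) and cyclic rotation (which conjugates by a \emph{prefix} of the word). You assert that every transposition involving a genuine cover relation ``occurs only at the column boundaries'' and is ``absorbed by a cyclic rotation,'' but a cyclic rotation can only relocate a prefix to the end of the word; it cannot absorb an illegal swap that arises in the interior of the word while you are interleaving one column's toggles into row order. Nothing in your sketch explains how the column-ordered word actually reaches a reverse-linear-extension word without ever performing a forbidden interior swap, and no substitute is offered for the two ingredients that make the paper's argument close: the common conjugate $\mathrm{Gyr}_H$ and the parity statement identifying the even/odd classes of the row and column decompositions. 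Without those (or a worked-out induction replacing them), the core of the proof is missing.
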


\subsubsection{$K$-promotion on increasing tableaux and rowmotion on the product of three chains}
In \cite{Pechenik2014}, O.\ Pechenik generalized Sch\"utzenberger promotion on standard Young tableaux to \emph{$K$-promotion} on \emph{increasing tableaux}, using the  \emph{$K$-jeu de taquin} of H.\ Thomas and A.\ Yong \cite{TY2009}. Increasing tableaux, a special subset of semistandard Young tableaux, first appeared in \cite{BKSTY2008} in the context of $K$-theoretic Schubert calculus. We give the definitions of increasing tableaux and $K$-promotion below.

\begin{definition}
An \textit{increasing tableau} of shape $\lambda$ is a filling of boxes of partition shape $\lambda$ with positive integers such that the entries strictly increase from left to right across rows and strictly increase from top to bottom along columns. We will use $\inc{q}{\lambda}$ to indicate the set of increasing tableaux of shape $\lambda$ with entries at most $q$.
\end{definition}

\begin{definition}
\label{def:tabjdt}
Let $T \in \inc{q}{\lambda}$. Delete all labels 1
from $T$. Consider the set of boxes that are either empty or contain 2. We simultaneously delete
each label 2 that is adjacent to an empty box and place a 2 in each empty box that is adjacent to a 2. Now consider
the set of boxes that are either empty or contain 3, and repeat the above
process. Continue until all empty boxes are located at outer corners of $\lambda$.
Finally, label those boxes $q + 1$ and then subtract 1 from each entry. The
result is the \emph{$K$-promotion of $T$}, which we denote $\kpro(T)$. Note that $\kpro(T) \in \inc{q}{\lambda}$.
\end{definition}

This is not the only way to describe $K$-promotion, however. In \cite{DPS2015}, O.\ Pechenik and the first and second authors showed that $K$-promotion can be performed using a sequence of local involutions analogous to those of Bender and Knuth for semistandard Young tableaux~\cite{BK1972}. They called these involutions $\emph{K-Bender-Knuth involutions}$, denoted by $\kbk_i$. They are the special case of our Definition \ref{def:bkR} when $P$ is of partition shape $\lambda$, since  increasing labelings on such a poset are increasing tableaux.

\begin{proposition}[\protect{\cite[Proposition 2.5]{DPS2015}}]
\label{prop:tabbk=jdt}
For $T \in \inc{q}{\lambda}$, $\kpro(T)=\kbk_{q-1} \circ \dots \circ \kbk_{1}$.
\end{proposition}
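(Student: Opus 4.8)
The plan is to prove that the two recipes for $\kpro$---the $K$-jeu de taquin slide of Definition~\ref{def:tabjdt} and the composition $\kbk_{q-1}\circ\cdots\circ\kbk_1$---produce the same tableau by analyzing each at the level of local moves and then matching them. First I would record how a single $\kbk_i$ acts on an increasing tableau: since every entry is restricted to $\{1,\dots,q\}$ we have $R(p)_{>i}=i+1$, so $\kbk_i$ simultaneously changes each $i$ to $i+1$ wherever neither the box below nor the box to its right already contains $i+1$, and changes each $i+1$ to $i$ wherever neither the box above nor the box to its left already contains $i$; in short it flips the ``free'' $i$'s and $(i+1)$'s across the staircase boundary between the two values. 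In parallel I would record one stage of Definition~\ref{def:tabjdt}: after the $1$'s are deleted, the stage processing value $v$ simultaneously deletes each $v$ adjacent to a hole and inserts a $v$ into each hole adjacent to a $v$. The goal is a dictionary translating the simultaneous slide updates of the stage processing value $v$ into the flips performed by $\kbk_{v-1}$.

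I would then prove equality of the two outputs by induction on the maximal label $q$, with the base case $q=1$ (empty composition versus a slide that returns the all-$1$ tableau) immediate. The natural reduction deletes all entries equal to $q$: because a $q$ can have no box to its right or below, the $q$-boxes are removable corners, so their deletion yields an increasing tableau $T'$ of a smaller shape with labels at most $q-1$. One checks that $\kbk_{q-2}\circ\cdots\circ\kbk_1$ acts identically on the common cells whether or not the $q$'s are present (no $q$ can sit above or left of a smaller entry, so deleting the $q$'s never removes a neighbor relevant to a lowering move, and a deleted $q$ to the right or below never blocked a raising move), and similarly that the slide stages processing values $2,\dots,q-1$ are unaffected by the inert $q$'s. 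Thus the inductive hypothesis $\kpro=\kbk_{q-2}\circ\cdots\circ\kbk_1$ on $\inc{q-1}{\lambda'}$ handles everything except the interaction of the $q$'s, and the remaining work is to show that reinstating the $q$'s, running the top slide stage (value $q$), and performing the ``fill the outer corners with $q+1$, then subtract $1$'' step reproduces exactly the effect of appending $\kbk_{q-1}$. A point I would stress in the write-up is that the two procedures do not transport the physical labels the same way: on $\begin{smallmatrix}1&2\\3&4\end{smallmatrix}$ the slide cycles the corner $1$ out, whereas the $\kbk_i$ leave that $1$ fixed, yet the outputs agree. The induction must therefore establish equality of the resulting \emph{tableaux} through the local dictionary, not a bijection of label trajectories.

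I expect the main obstacle to be the genuinely $K$-theoretic behavior at this top stage, where labels are neither conserved nor transported along single paths: a hole flanked by two equal entries forces a \emph{split}, and an entry flanked by two holes forces a \emph{merge}, so the multiplicity of a value can change within a single stage. The crux is a local case analysis showing that the clause ``wherever possible'' in the definition of $\kbk_{q-1}$ fires in exactly the configurations where the corresponding simultaneous deletions and insertions of the slide fire, splits and merges included, and that the freeness conditions line up correctly against the shape change caused by vacating and refilling the $q$-cells. In the classical standard Young tableau setting each value occurs once and the slide is a single lattice path, so this verification is routine; here the repeated entries and the split/merge phenomena are precisely what make the correspondence delicate, and pinning down the freeness conditions so that they match the $K$-jeu de taquin updates is the technical heart of the argument.
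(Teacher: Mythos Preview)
The paper does not prove this proposition directly---it is cited from \cite{DPS2015}---but it does prove the generalization to arbitrary finite posets as Theorem~\ref{thm:bk=jdt}, and that is the natural proof to compare against.

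Your induction on $q$ is a correct but genuinely different argument. The paper's proof is a direct, element-by-element verification: it introduces the \emph{sliding subposet} $S(f)$ (the cells that are ever labeled $\boxempty$ during jeu de taquin), observes that $S(f)$ is a union of maximal saturated chains, and for each cell $x$ separately checks $\rho_{q-1}\circ\cdots\circ\rho_1(f)(x)=jdt(f)(x)-1$ via a three-way case split according to whether $x\in S(f)$ is non-maximal, $x\in S(f)$ is maximal, or $x\notin S(f)$. In the first two cases it walks along a chain in $S(f)$ and shows the Bender--Knuth involutions ratchet the label at $x$ up to one less than the minimum label among its covers; in the third it shows the label is simply decremented. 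No induction on $q$ appears, and the $K$-theoretic splits and merges are absorbed by the fact that $S(f)$ is a union of chains rather than a single path.

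One remark on your plan: the top stage you flag as the main obstacle is in fact the easy step. The observation you should make explicit is that, after stages $2,\ldots,q-1$, the hole positions in the jeu de taquin process are precisely the cells carrying $q-1$ in $\kpro_{q-1}(T')=\kbk_{q-2}\circ\cdots\circ\kbk_1(T')$, since those are exactly the cells that were filled with $q$ and then decremented. With that dictionary, the final comparison is a four-case local check matching ``hole/$q$, adjacent or not'' on the slide side with ``$(q{-}1)$/$q$, free or not'' on the $\kbk_{q-1}$ side; because the $q$-cells form an antichain of outer corners of $\lambda$ and the holes sit at outer corners of $\lambda'$, no splits or merges occur at this stage at all. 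All the genuinely $K$-theoretic branching lives in stages $2,\ldots,q-1$ and is swallowed by the inductive hypothesis. What each route buys: the paper's sliding-subposet argument is pointwise and transfers verbatim to arbitrary posets, which is the paper's goal; your induction on $q$ is arguably shorter once the hole$\,\leftrightarrow\,(q{-}1)$ correspondence is stated, and has the pleasant feature of hiding the $K$-theoretic complexity inside the induction rather than confronting it head-on.
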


In Theorem \ref{thm:bk=jdt}, we show this result generalizes to $\inc{q}{P}$.

\smallskip
Also in  \cite{DPS2015}, the authors built on this result to give a connection between increasing tableaux of rectangular shape with entries at most $q$ and order ideals in a product of three chains poset. While the bijection between the two is straightforward, it is non-trivial  that $K$-promotion on increasing tableaux is carried equivariantly to 
a  {toggle group action} they called \emph{hyperplane promotion} on order ideals 
in the product of three chains poset. We give the relevant definitions below.

\begin{definition}[\cite{DPS2015}]
We say that an \emph{$n$-dimensional lattice projection} of a ranked poset $P$ is an order and rank preserving map $\pi : P \rightarrow \mathbb{Z}^n$, where the rank function on $\mathbb{Z}^n$ is the sum of the coordinates and $x \le y$ in $\mathbb{Z}^n$ if and only if the componentwise difference $y-x$ is in $(\mathbb{Z}_{\ge 0})^n$.
\end{definition}

\begin{definition}[\cite{DPS2015}]
\label{def:HyperToggle}
Let $P$ be a poset with an $n$-dimensional lattice projection $\pi$, and let $v = (v_1,v_2,\dots,v_n)$ where $v_j \in \{\pm 1\}$.  Let $T_{\pi, v}^i$ be the product of toggles $t_x$ for all elements $x$ of $P$ that lie on the affine hyperplane $\langle \pi(x),v \rangle=i$.  If there is no such $x$, then this is the empty product, considered to be the identity.  Define \textit{(hyperplane) promotion with respect to $\pi$ and $v$} as the toggle product Pro$_{\pi,v}=\dots T_{\pi,v}^{-2} T_{\pi,v}^{-1} T_{\pi,v}^{0} T_{\pi,v}^{1} T_{\pi,v}^{2}\dots$
\end{definition}

The authors showed one such promotion is always rowmotion.

\begin{proposition}[\protect{\cite[Proposition 3.18]{DPS2015}}]
$\mathrm{Pro}_{\pi,(1,1,\dots,1)}=\mathrm{Row}$.
\end{proposition}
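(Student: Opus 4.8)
The plan is to show that rowmotion, realized via toggling top-to-bottom by rank (Theorem~\ref{thm:linextrow}), coincides with the hyperplane promotion $\mathrm{Pro}_{\pi,(1,1,\dots,1)}$ when we choose the all-ones vector $v=(1,1,\dots,1)$. First I would record the key observation: since $\pi$ is an \emph{order and rank preserving} lattice projection, the inner product $\langle \pi(x),v\rangle = \langle \pi(x),(1,\dots,1)\rangle$ is exactly the sum of the coordinates of $\pi(x)$, which by the rank-preserving hypothesis equals $\mathrm{rk}_P(x)$. Thus the affine hyperplane $\langle \pi(x),v\rangle=i$ collects precisely the rank-$i$ elements of $P$, and $T^i_{\pi,(1,\dots,1)}$ is the product of all toggles $t_x$ over elements $x$ of rank $i$.

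Next I would argue that $\mathrm{Pro}_{\pi,(1,\dots,1)} = \dots T^{-1}T^0T^1T^2\dots$, which proceeds from high rank down to low rank, is a valid way to compute rowmotion. By Theorem~\ref{thm:linextrow}, rowmotion is obtained by toggling in the reverse order of any linear extension, i.e.\ from maximal elements down to minimal elements. The only subtlety is that $T^i$ toggles all rank-$i$ elements \emph{simultaneously} (as a product in some fixed but unspecified order), whereas Theorem~\ref{thm:linextrow} toggles one element at a time. The resolution is Remark~\ref{rmk:ToggleCommute}: two toggles commute whenever neither element covers the other, and any two distinct elements of the same rank are incomparable, hence neither covers the other. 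Therefore all the toggles comprising a single $T^i$ commute with one another, so the product $T^i$ is well-defined independent of order and equals the composite of toggling the rank-$i$ elements in whatever order one likes.

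From here the conclusion follows by exhibiting a linear extension compatible with the rank function: order the elements of $P$ so that every element of rank $i$ precedes every element of rank $i+1$, breaking ties within a rank arbitrarily. Then toggling in the reverse of this linear extension means first toggling all top-rank elements, then all elements of the next rank down, and so on — which, after grouping each rank's toggles into the (order-independent) product $T^i$, is exactly the expression $\dots T^2 T^1 T^0 T^{-1}\dots$ defining $\mathrm{Pro}_{\pi,(1,\dots,1)}$. Since the toggle products agree as elements of the toggle group, they define the same action on $J(P)$, giving $\mathrm{Pro}_{\pi,(1,\dots,1)}=\mathrm{Row}$.

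I expect the main (though modest) obstacle to be the bookkeeping around well-definedness and ordering: one must verify that the within-rank commutativity lets $T^i$ be treated as a single well-defined toggle block, and that the global product descending through ranks genuinely matches the reverse-linear-extension prescription of Theorem~\ref{thm:linextrow}. Everything else reduces to the identification $\langle\pi(x),(1,\dots,1)\rangle=\mathrm{rk}_P(x)$, which is immediate from the definition of a lattice projection.
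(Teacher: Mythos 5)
Your argument is correct and is essentially the proof of this result (note the paper itself only cites it from \cite{DPS2015} without reproving it): with $v=(1,\dots,1)$ the hyperplanes are exactly the rank levels, same-rank toggles commute by Remark~\ref{rmk:ToggleCommute} since equal-rank elements are incomparable, and a rank-refining linear extension reduces the claim to Theorem~\ref{thm:linextrow}. The only blemish is a notational slip in your final sentence, where you write the composite as $\dots T^2T^1T^0T^{-1}\dots$ rather than in the composition order $\dots T^{-1}T^0T^1T^2\dots$ (rightmost factor, i.e.\ top rank, acting first) that you correctly used earlier and that matches Definition~\ref{def:HyperToggle}.
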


This proposition and the next theorem show that any hyperplane promotion is conjugate to the more natural toggle group action of {rowmotion}.

\begin{theorem}[\protect{\cite[Theorem 3.25]{DPS2015}}]
\label{thm:DPS}
Let $P$ be a poset with an $n$-dimensional lattice projection $\pi$. Let $v$ and $w$ be vectors in $\mathbb{R}^n$ with entries in $\{\pm 1\}$. 
Then there is an equivariant bijection
between $J(P)$ under $\pro_{\pi,v}$ and $J(P)$ under $\pro_{\pi,w}$. 
%(Note $\pro_{\pi,(1,1,\ldots,1)}$ is rowmotion.)
\end{theorem}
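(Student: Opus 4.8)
The plan is to recast each hyperplane promotion $\pro_{\pi,v}$ as a \emph{Coxeter-type toggle word} attached to an acyclic orientation of the covering graph of $P$, and then to show that flipping a single coordinate of $v$ moves this orientation within one source-to-sink flip class, forcing the two words to be conjugate. Since conjugacy is transitive and conjugate toggle-group elements give an equivariant bijection of $J(P)$, it suffices to treat the case where $v$ and $w$ differ in exactly one coordinate $j$; the general statement of Theorem~\ref{thm:DPS} then follows by flipping the disagreeing coordinates one at a time and composing the resulting conjugations.

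I would first reformulate $\pro_{\pi,v}$ orientation-theoretically. Let $\Gamma$ be the covering graph of $P$. Because $\pi$ is rank preserving, every cover $x \lessdot y$ satisfies $\pi(y)-\pi(x)=e_k$ for a single coordinate $k$, so $\langle \pi(y),v\rangle - \langle \pi(x),v\rangle = v_k \neq 0$. Orient $x\to y$ exactly when $\langle \pi(x),v\rangle < \langle \pi(y),v\rangle$, giving an acyclic orientation $O_v$. By Remark~\ref{rmk:ToggleCommute}, toggles of elements on a common hyperplane $\langle \pi(\cdot),v\rangle = i$ pairwise commute, so each $T^i_{\pi,v}$ is a well-defined involution and $\pro_{\pi,v}$ is the product of all $t_x$ in an order consistent with $O_v$. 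Any two orders consistent with $O_v$ differ by transpositions of elements incomparable in $O_v$, hence non-covering, hence commuting, so this toggle word $c_{O_v}:=\pro_{\pi,v}$ depends only on $O_v$. Passing from $v$ to $w$ reverses precisely the covers with $k=j$ and fixes all others, so $O_w$ is obtained from $O_v$ by reversing every ``coordinate-$j$'' edge of $\Gamma$.

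The group-theoretic engine is that a \emph{source-to-sink flip} realizes a conjugation: if $x$ is extremal in $O$ (a source), it may be taken as the first factor, $c_O = t_x\cdot r$, and then $t_x\,c_O\,t_x = r\,t_x = c_{O'}$, where $O'$ reverses all edges at $x$; thus flip-equivalent orientations yield conjugate words. It remains to connect $O_v$ and $O_w$ by such flips, for which I would use the standard flip invariant: along any cycle $C$ of $\Gamma$ with a fixed traversal, the balance $b(C,O) = (\#\text{forward edges}) - (\#\text{backward edges})$ is unchanged by a source-to-sink flip. Reversing the coordinate-$j$ edges changes $b(C,\cdot)$ by twice the difference between the numbers of backward and forward coordinate-$j$ edges of $C$; but pushing $C$ through $\pi$ gives a closed walk in $\mathbb{Z}^n$, which uses $+e_j$ and $-e_j$ steps equally often, and (for fixed sign $v_j$) these are exactly the forward and backward coordinate-$j$ edges. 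Hence $b(C,O_v) = b(C,O_w)$ for every $C$, and by the completeness of the cycle-balance invariant for source-to-sink flip equivalence (Pretzel's theorem), $O_v$ and $O_w$ are flip-equivalent. Composing the associated conjugations produces a toggle-group element $g$ with $g\,\pro_{\pi,v}\,g^{-1} = \pro_{\pi,w}$, and conjugation by $g$ is the desired equivariant bijection of $J(P)$.

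The main obstacle is this last flip-equivalence step, specifically proving rather than citing that equal cycle-balance forces source-to-sink flip equivalence. The clean route invokes Pretzel's theorem; a self-contained argument would instead build the flip sequence explicitly by a hyperplane sweep — peeling off the extreme $\langle \pi(\cdot),v\rangle$-layer, conjugating it to the opposite end, and inducting. The delicate bookkeeping there comes from the non-commuting coordinate-$j$ covers, which block free rearrangement and force each reversal to be effected one source-flip at a time; verifying that the sweep terminates at $O_w$ and correctly tracking the accumulated conjugator $g$ is the portion that requires real care.
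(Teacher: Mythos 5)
Your proposal is correct, but it takes a genuinely different route from the one this paper (and its predecessor) uses. Here, Theorem~\ref{thm:DPS} is subsumed by Theorem~\ref{thm:newprorow}: each $\pro_{\pi,v}$ is $\togpro_{H}$ for the column toggle order $H(p)=\langle\pi(p),v\rangle$, the hyperplane toggles $T_H^i$ are involutions satisfying $(T_H^iT_H^j)^2=1$ for $|i-j|>1$, and the Coxeter-style Lemma~\ref{lem:conjugatelem} then says any reordering of the product $\prod_i T_H^i$ is a conjugation; both $\togpro_H$ and $\row$ (decomposed via the layers $R_H^i$, with the parity bookkeeping of Lemma~\ref{lem:parity}) get conjugated to the same gyration element, so any two hyperplane promotions are conjugate through $\row$. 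You instead encode each sweep as a toggle word attached to an acyclic orientation $O_v$ of the covering graph, observe that a source-to-sink flip is conjugation by a single toggle, and connect $O_v$ to $O_w$ by checking that the cycle-balance invariant is unchanged (your verification that closed walks in $\mathbb{Z}^n$ use $+e_j$ and $-e_j$ steps equally often is exactly where the lattice-projection hypothesis enters, and it is correct), then invoke Pretzel's completeness theorem for that invariant. The trade-off: the paper's argument is elementary and self-contained, needing only the group-theoretic lemma, and it delivers the conjugacy to $\row$ as part of the same computation; your argument is more flexible in principle, since it applies to any two toggle words arising from acyclic orientations with equal cycle invariants rather than only to hyperplane or column sweeps, but as you acknowledge it outsources the hardest step to Pretzel's theorem, which you would need to cite carefully or reprove (e.g., by the hyperplane-sweep induction you sketch, which essentially reconstructs the paper's Lemma~\ref{lem:conjugatelem} computation in orientation language).
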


\begin{theorem}[\protect{\cite[Theorem 4.4]{DPS2015}}]
\label{thm:dpsequivarbij}
$J(\mathbf{a \times b \times c})$ under $\mathrm{Row}$ is in equivariant bijection with $\inc{a+b+c-1}{a \times b}$ under $\kpro$.
\end{theorem}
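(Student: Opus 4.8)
The plan is to factor the desired correspondence through the hyperplane-promotion machinery, reducing the statement to a single local check that the $K$-Bender–Knuth involutions become hyperplane toggles. First I would make the underlying bijection explicit. Writing cells of the $a\times b$ rectangle as pairs $(i,j)$ with $1\le i\le a$ and $1\le j\le b$, the strict row/column inequalities together with the global bound force $T(i,j)\in\{i+j-1,\dots,i+j+c-1\}$ for any $T\in\inc{a+b+c-1}{\mathbf{a}\times\mathbf{b}}$, a range of exactly $c+1$ admissible values, so $\ell(i,j)=T(i,j)-(i+j-1)\in\{0,\dots,c\}$ records a fill level in a third chain of length $c$. Because $\ell$ is weakly increasing in each of $i$ and $j$, the complementary profile $g(i,j)=c-\ell(i,j)$ is weakly decreasing, so $\Phi(T)=\{(i,j,k): k\le g(i,j)\}$ is down-closed. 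I would check that the strict inequalities for $T$ are exactly what make $\Phi(T)$ an order ideal, and that the bound $a+b+c-1$ together with the staircase minimum makes $\Phi$ a bijection onto $J(\mathbf{a}\times\mathbf{b}\times\mathbf{c})$. (The $\Gamma$-machinery of Theorem~\ref{thm:Gamma1MeetIrred}, identifying $\Gamma(\mathbf{a}\times\mathbf{b},a+b+c-1)$ with $\mathbf{a}\times\mathbf{b}\times\mathbf{c}$, gives an alternative route to this bijection.)

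Next, equip $\mathbf{a}\times\mathbf{b}\times\mathbf{c}$ with the identity lattice projection $\pi\colon(i,j,k)\mapsto(i,j,k)$ into $\mathbb{Z}^3$ and take the sign vector $v=(1,1,-1)$. The orientation reversal in the third coordinate built into $g$ is precisely what makes the boundary element above $(i,j)$ satisfy $T(i,j)=(i+j-k)+(c-1)$, so the level sets of the tableau value become the affine hyperplanes $\langle\pi(x),v\rangle=i+j-k=\text{const}$. The key lemma I would prove is that, under $\Phi$, each involution $\kbk_i$ corresponds to a single hyperplane toggle product $T^{j(i)}_{\pi,v}$ for an affine reindexing $j(i)=i-c+1$, i.e. $\Phi\circ\kbk_i=T^{j(i)}_{\pi,v}\circ\Phi$. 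Concretely, changing a value from $i$ to $i+1$ removes the current boundary element and changing it from $i+1$ to $i$ adds the element just above it, and both of these elements lie on the one hyperplane $\{i+j-k=i-c+1\}$; the toggles along that hyperplane commute since no two elements of a common hyperplane cover one another (Remark~\ref{rmk:ToggleCommute}), and the blocked cases of $\kbk_i$ match the blocked toggles one-for-one. This local matching—including the simultaneous multi-cell behavior and the corner cases—is the main obstacle and the step demanding the most care.

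With the lemma in hand, Proposition~\ref{prop:tabbk=jdt} gives $\kpro=\kbk_{q-1}\circ\cdots\circ\kbk_1$ with $q=a+b+c-1$, and since the hyperplanes outside the relevant contiguous block meet the poset in the empty set, the bi-infinite toggle product collapses and $\Phi\circ\kpro=\pro_{\pi,v}\circ\Phi$; thus $\Phi$ is an equivariant bijection between $\inc{a+b+c-1}{\mathbf{a}\times\mathbf{b}}$ under $\kpro$ and $J(\mathbf{a}\times\mathbf{b}\times\mathbf{c})$ under $\pro_{\pi,(1,1,-1)}$. Finally I would invoke Theorem~\ref{thm:DPS} with the sign vectors $v=(1,1,-1)$ and $w=(1,1,1)$ to obtain an equivariant bijection between $J(\mathbf{a}\times\mathbf{b}\times\mathbf{c})$ under $\pro_{\pi,(1,1,-1)}$ and under $\pro_{\pi,(1,1,1)}$, the latter being $\row$ by \cite[Proposition 3.18]{DPS2015}. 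Composing the two equivariant bijections yields the stated equivariant bijection between $\inc{a+b+c-1}{\mathbf{a}\times\mathbf{b}}$ under $\kpro$ and $J(\mathbf{a}\times\mathbf{b}\times\mathbf{c})$ under $\row$.
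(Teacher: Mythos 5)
Your proposal is correct and follows essentially the same route as the original argument in \cite{DPS2015} (this paper only cites the theorem rather than reproving it): the explicit bijection you describe is exactly the one obtained from $\Gamma(\mathbf{a}\times\mathbf{b},a+b+c-1)\cong\mathbf{a}\times\mathbf{b}\times\mathbf{c}$, the local matching of $\kbk_i$ with a single hyperplane toggle product is the specialization of Theorem~\ref{thm:proequalspro}, and the passage from $\pro_{\pi,(1,1,-1)}$ to $\row$ via Theorem~\ref{thm:DPS} mirrors Theorem~\ref{thm:newprorow}. Your index computations (the range $\{i+j-1,\dots,i+j+c-1\}$, the hyperplane $i+j-k=i-c+1$, and the commutation of toggles within a hyperplane) all check out.
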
 

This was a second, more general setting in which rowmotion was shown to have the same orbit structure as a previously studied promotion action.
\textbf{A main purpose of the present paper} is to find an even more general realm in which rowmotion coincides with a naturally defined generalized promotion. Theorem \ref{thm:prorow} presents this main result, which can be seen as an analogue of Theorem \ref{thm:dpsequivarbij}. 

\subsubsection{Resonance}
In \cite{DPS2015}, O.\ Pechenik and the first and second authors defined resonance to give language to describe a phenomenon observed when an action of interest (such as rowmotion) projects to a cyclic action of smaller order.

\begin{definition}
Suppose $G=\langle g \rangle$ is a cyclic group acting on a set $X$, $\mathcal{C}_{\omega}=\langle c \rangle$ a cyclic group of order $\omega$ acting nontrivially on a set $Y$, and $f:X \rightarrow Y$ a surjection. We say the triple $(X,G,f)$ exhibits \emph{resonance with frequency $\omega$} if, for all $x \in X$, $c \cdot f(x)=f(g \cdot x)$.
\end{definition}

Their prototypical example of resonance stems from increasing tableaux under $K$-promotion.

\begin{definition}
Define the binary content of an increasing tableau $T \in \inc{q}{\lambda}$ to be the sequence $\mathrm{Con}(T)=(a_1,a_2,\dots,a_q)$, where $a_i=1$ if $i$ is an entry of $T$ and $a_i=0$ if $i$ is not an entry of $T$.
\end{definition}

\begin{lemma}[\protect{\cite[Lemma 2.1]{DPS2015}}]
Let $T \in \inc{q}{\lambda}$. If $\mathrm{Con}(T)=(a_1,a_2,\dots,a_q)$, then $\mathrm{Con}(\kpro(T))$ $=(a_2,\dots,a_q,a_1)$.
\end{lemma}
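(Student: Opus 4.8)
The plan is to reduce the statement to tracking binary content through the $K$-Bender--Knuth decomposition of $\kpro$. By Proposition~\ref{prop:tabbk=jdt}, $\kpro(T) = \kbk_{q-1}\circ\cdots\circ\kbk_1(T)$, so it suffices to understand how each $\kbk_i$ acts on $\mathrm{Con}$. My central claim is that $\kbk_i$ acts on the binary content $(a_1,\dots,a_q)$ by transposing the $i$th and $(i+1)$th entries and fixing all others, the latter being immediate since $\kbk_i$ alters only boxes labeled $i$ or $i+1$. Granting this claim, the composition $\kbk_{q-1}\circ\cdots\circ\kbk_1$ performs the adjacent transpositions in positions $(1,2),(2,3),\dots,(q-1,q)$ in that order; the net effect carries the first entry to the end and shifts every other entry one step to the left, sending $(a_1,a_2,\dots,a_q)$ to $(a_2,\dots,a_q,a_1)$, which is exactly the asserted cyclic shift.

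The heart of the matter is therefore the claim that $\kbk_i$ swaps $a_i$ and $a_{i+1}$, which I would prove by a box-by-box analysis of which labels $\kbk_i$ can move. Recall from Definition~\ref{def:bkR} that $\kbk_i$ changes a label $i$ to $i+1$ exactly when no box covering it is labeled $i+1$ (call such an $i$ \emph{free}), and symmetrically changes a label $i+1$ to $i$ exactly when no box it covers is labeled $i$; when an $i$ is covered by an $i+1$, both are \emph{locked} and fixed by $\kbk_i$. I would then establish the two equalities $a_i^{\mathrm{new}}=a_{i+1}^{\mathrm{old}}$ and $a_{i+1}^{\mathrm{new}}=a_i^{\mathrm{old}}$ directly. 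For the first: if some box holds $i+1$ before $\kbk_i$, then either it is free and becomes an $i$, or it is locked, in which case a box it covers holds an $i$ that is itself covered by this $i+1$, hence locked and surviving; either way an $i$ is present afterward. Conversely, a box holds $i$ after $\kbk_i$ only if it held a surviving locked $i$ (forcing a covering $i+1$ to have been present) or a converted free $i+1$; in both cases an $i+1$ was present before. Thus an $i$ appears afterward precisely when an $i+1$ appeared before, and the second equality follows by the same argument with the roles of $i$ and $i+1$ reversed.

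The step I expect to require the most care is this box-by-box claim, in particular the bookkeeping around locked pairs: a locked adjacent $i$/$i+1$ pair keeps \emph{both} values present before and after, and I must confirm this does not break the transposition (it does not, since such a pair forces $a_i^{\mathrm{old}}=a_{i+1}^{\mathrm{old}}=1$ and likewise afterward, so the swap is trivially correct on those coordinates). I must also verify that simultaneously toggling all free labels cannot create or destroy an occurrence of $i$ or $i+1$ beyond what the case analysis records. This rests on the increasing property: each row and each column of $T$ contains at most one $i$ and at most one $i+1$, so any row or column carrying both must place them in covering position, which is exactly what distinguishes free from locked labels. Once the transposition claim is in hand, the remaining argument is the purely formal composition of adjacent transpositions described in the first paragraph.
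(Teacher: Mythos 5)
Your argument is correct, but it takes a genuinely different route from the one the paper relies on. This lemma is quoted from \cite{DPS2015}, and both there and in this paper's own analogue for increasing labelings (the unnumbered lemma preceding Corollary~\ref{thm:res}) the proof runs through the \emph{jeu de taquin} description of promotion (Definition~\ref{def:tabjdt}, resp.\ Definition~\ref{def:jdt}): the $1$s are deleted, the intermediate slides never create or destroy an occurrence of any label $2,\dots,q$ (a vanishing $i$ reappears in the box it vacates), the terminal empty boxes are filled with $q+1$ exactly when a $1$ was present to start the slide, and the final subtraction of $1$ makes the cyclic shift of $\mathrm{Con}$ immediate. You instead invoke Proposition~\ref{prop:tabbk=jdt} to write $\kpro=\kbk_{q-1}\circ\cdots\circ\kbk_1$ and prove the sharper local statement that each $\kbk_i$ transposes $a_i$ and $a_{i+1}$. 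Your free/locked case analysis is sound: an $i+1$ either decrements to $i$ or sits above a locked, surviving $i$, so $i$ is present afterward iff $i+1$ was present before, and symmetrically for the other coordinate; composing the adjacent transpositions $(1\,2),(2\,3),\dots,(q-1\,q)$ in that order then yields the long cycle $(a_1,\dots,a_q)\mapsto(a_2,\dots,a_q,a_1)$. The jdt route is shorter and is what the sources use; your route costs more bookkeeping but isolates a finer fact of independent interest, namely that each individual $K$-Bender--Knuth involution already acts on binary content by an adjacent transposition. One caveat worth recording: this finer fact is special to the global bound $q$, since for a general restriction function $R$ the involution $\rho_i$ exchanges $i$ with $R(p)_{>i}$, which need not equal $i+1$, so the transposition claim (and hence your composition argument) does not carry over to $\inc{R}{P}$.
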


In words, $K$-promotion on an increasing tableau cyclically shifts the binary content of the tableau. This gives the following statement of resonance, since though $K$-promotion on $\inc{q}{P}$ is often of order larger than $q$, this lemma shows it projects nicely to a cyclic action of order $q$.

\begin{theorem}[\protect{\cite[Theorem 2.2]{DPS2015}}]
$(\inc{q}{\lambda}, \langle \kpro \rangle, \mathrm{Con})$ exhibits resonance with frequency $q$.
\end{theorem}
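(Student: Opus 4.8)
The plan is to unpack the definition of resonance and observe that the content-shift lemma (\cite[Lemma 2.1]{DPS2015}, stated just above) supplies exactly the compatibility identity the definition demands, so that the whole proof reduces to setting up the ambient cyclic action correctly and checking that it is nontrivial.

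First I would fix the target of the surjection and the cyclic group acting on it. Let $Y$ be the image of $\mathrm{Con}$, i.e.\ the set of length-$q$ binary vectors that occur as the content of some $T\in\inc{q}{\lambda}$, and let $c:\{0,1\}^q\to\{0,1\}^q$ be the cyclic left shift $(a_1,a_2,\dots,a_q)\mapsto(a_2,\dots,a_q,a_1)$. As a permutation of the $q$ coordinate positions, $c$ is a single $q$-cycle, so $\langle c\rangle$ is cyclic of order exactly $q$; this is the group $\mathcal{C}_q$. By construction $\mathrm{Con}:\inc{q}{\lambda}\to Y$ is a surjection, and with $g=\kpro$ this provides the data $(X,G,f)=(\inc{q}{\lambda},\langle\kpro\rangle,\mathrm{Con})$ required by the definition.

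Next I would verify that $c$ preserves $Y$ and acts nontrivially on it. The content-shift lemma gives $c(\mathrm{Con}(T))=\mathrm{Con}(\kpro(T))$ for every $T$; since $\kpro(T)\in\inc{q}{\lambda}$, the right-hand side lies in $Y$, so $c(Y)\subseteq Y$, and because $\kpro$ is invertible, $c$ restricts to a bijection of $Y$. For nontriviality, observe that the only vectors in $\{0,1\}^q$ fixed by the full $q$-cycle $c$ are the two constant vectors $0^q$ and $1^q$; hence it suffices to exhibit a single $T\in\inc{q}{\lambda}$ whose content is nonconstant, which exists whenever $\lambda$ is nonempty and $q\geq 2$ (any tableau that uses some value in $\{1,\dots,q\}$ while omitting another provides such a $T$).

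Finally, the resonance identity is immediate: the definition asks for $c\cdot f(x)=f(g\cdot x)$ for all $x$, which with $f=\mathrm{Con}$ and $g=\kpro$ reads $c(\mathrm{Con}(T))=\mathrm{Con}(\kpro(T))$, precisely the content-shift lemma. Therefore the triple $(\inc{q}{\lambda},\langle\kpro\rangle,\mathrm{Con})$ satisfies every condition for resonance with frequency $q$. Within this argument the only step requiring genuine care is the nontriviality of the $\mathcal{C}_q$-action; the real conceptual content sits entirely inside the content-shift lemma, which I am taking as given, so the main anticipated obstacle — were the lemma not already available — would be establishing that $K$-promotion cyclically permutes the binary content rather than anything in the resonance bookkeeping itself.
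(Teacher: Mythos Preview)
Your proposal is correct and follows essentially the same approach the paper indicates: the theorem is cited from \cite{DPS2015} with no separate proof here, and the surrounding text makes clear that it follows immediately from the content-shift lemma (\cite[Lemma 2.1]{DPS2015}) by plugging into the definition of resonance. Your added care in verifying that $c$ restricts to a bijection of $Y$ and acts nontrivially is a welcome bit of rigor beyond what the paper spells out, but the underlying argument is the same.
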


Using Theorem~\ref{thm:dpsequivarbij}, they also obtained a resonance result for rowmotion.
We give analogous resonance results in the more general realm of increasing labelings in Corollaries~\ref{thm:res} and~\ref{cor:rowres}.

\section{Increasing labelings}
\label{sec:inc_labelings}
In this section, we extend the ideas behind the relationship between increasing tableaux of square shape and order ideals in the product of three chains poset to increasing labelings $\inc{R}{P}$ of any poset $P$ with restriction function $R$ and order ideals in the associated poset $\Gamma(P,R)$ of meet-irreducibles. In Subsection~\ref{subsec:gamma1}, we construct $\Gamma(P,R)$ and prove Theorem~\ref{thm:Gamma1MeetIrred} giving a bijection between $\inc{R}{P}$ and order ideals of $\Gamma(P,R)$. Next, in Subsection~\ref{subsec:q}, we restrict our attention to $\inc{q}{P}$ and its associated poset $\Gamma(P,q)$, which is simpler to describe than for general restriction functions.
Lastly, in Subsection~\ref{subsec:ranked} we consider how our framework naturally extended to the setting of weakly increasing labelings and prove an analogue of Theorem~\ref{thm:Gamma1MeetIrred} in this setting. We also discuss how weakly increasing labelings interact with strictly increasing labelings in the case where our poset is ranked.

\subsection{Construction of $\Gamma$}
\label{subsec:gamma1}
In this subsection, 
we construct $\Gamma(P,R)$, and prove Theorem~\ref{thm:Gamma1MeetIrred} showing the order ideals of $\Gamma(P,R)$ are in bijection with $\inc{R}{P}$, the increasing labelings of $P$ with ranges restricted by $R:P\mapsto \mathcal{P}(\mathbb{Z})$.

One natural restriction to place on $R$ is to require that if $k\in R(p)$, then there must be some increasing labeling $f\in\inc{R}{P}$ with $f(p)=k$. Otherwise, we could remove $k$ from the set of available labels for $p$ and not change the set  of allowable increasing labelings. We formalize this in the next proposition.

\begin{definition}
\label{def:consistent}
We say that a restriction function $R:P\mapsto \mathcal{P}(\mathbb{Z})$ is \emph{consistent} if for every covering relation $x\lessdot y$ in $P$, we have $\min(R(x)) < \min(R(y))$ and $\max(R(x))< \max(R(y))$.
\end{definition}

\begin{proposition}
\label{thm:inclabeliffconsistent}
Every $k\in R(p)$ has some increasing labeling $f\in\inc{R}{P}$ with $f(p)=k$ if and only if $R$ is consistent.
\end{proposition}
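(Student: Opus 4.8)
The plan is to prove the two implications separately. The direction ``all labels attainable $\Rightarrow$ $R$ consistent'' I would handle first, by contrapositive, since it is the easy one. If $R$ is not consistent, there is a cover $x\lessdot y$ with $\min R(x)\ge\min R(y)$ or $\max R(x)\ge\max R(y)$. In the first case the label $k=\min R(y)$ is unattainable at $y$: any $f\in\inc{R}{P}$ with $f(y)=\min R(y)$ would force $f(x)<\min R(y)\le\min R(x)\le f(x)$, which is absurd. In the second case $\max R(x)$ is unattainable at $x$, since it would force $f(y)>\max R(x)\ge\max R(y)\ge f(y)$. So consistency is necessary.

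For the forward (substantive) direction I would first record the reformulation that consistency is equivalent to $\min R$ and $\max R$ being strictly increasing along every chain of $P$, not merely across covers; this comes from composing the covering inequalities along a saturated chain. Two consequences: $a(q):=\min R(q)$ and $b(q):=\max R(q)$ are themselves increasing labelings in $\inc{R}{P}$, so $a(p)\le k\le b(p)$ for the given $k\in R(p)$. The task is then to interpolate a single labeling pinned to $f(p)=k$.

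I would construct such an $f$ by setting $f(q)=\min R(q)$ for every $q\not\ge p$, and defining $f$ on the principal filter $P_{\ge p}$ greedily: put $f(p)=k$, and, scanning $P_{\ge p}$ in a linear extension, take $f(q)$ to be the least element of $R(q)$ exceeding $\max\{f(q'):q'\lessdot q,\ q'\ge p\}$. To verify $f\in\inc{R}{P}$ I would check each cover $q_1\lessdot q_2$ according to its position relative to $p$: both in $P_{\ge p}$ (monotone by the greedy rule, which propagates from covers to all relations because a saturated chain between two elements of $P_{\ge p}$ stays in $P_{\ge p}$); both outside $P_{\ge p}$ (both values equal $\min R$, strictly increasing by consistency); and the mixed case $q_1\not\ge p$, $q_2\ge p$, where $f(q_1)=\min R(q_1)<\min R(q_2)\le f(q_2)$. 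The reverse mixed case is vacuous, since $q_1\ge p$ with $q_1\lessdot q_2$ forces $q_2\ge p$, and no cover can straddle $p$ (a relation $q_1<p<q_2$ with $q_1\lessdot q_2$ is impossible).

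The main obstacle is showing the greedy step never fails, i.e.\ that $\{r\in R(q):r>\max\{f(q'):q'\lessdot q,\ q'\ge p\}\}$ is always nonempty. I expect to prove this simultaneously with the bound $f(q)\le\max R(q)$ for all $q\in P_{\ge p}$, by induction along the linear extension: the base case is $f(p)=k\le\max R(p)$, and in the inductive step every relevant cover satisfies $f(q')\le\max R(q')<\max R(q)$ by consistency, so $\max R(q)$ itself lies in the set; hence the set is nonempty and the greedy minimum is $\le\max R(q)$. This closes the induction and certifies that $f\in\inc{R}{P}$ with $f(p)=k$, completing the attainability direction.
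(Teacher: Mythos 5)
Your proof is correct, and while your treatment of necessity coincides with the paper's (both exhibit, at an inconsistent cover $x\lessdot y$, the unattainable label $\min(R(y))$ or $\max(R(x))$), your sufficiency argument takes a genuinely different route. The paper fixes a maximal antichain $A$ containing $p$, partitions $P$ into $A$, the set $B$ of elements strictly below $A$, and the set $C$ of elements strictly above $A$, and defines $f(p)=k$, $f=\min R$ on $B$, $f=\max R$ on $C$, and an arbitrary allowed value on the rest of $A$; the verification is left as a routine check, and it implicitly uses the same chain-extended form of consistency you record (e.g.\ to compare $\min(R(b))$ with $\max(R(c))$ across a cover from $B$ into $C$). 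You instead set $f=\min R$ off the principal filter $P_{\geq p}$ and fill $P_{\geq p}$ greedily along a linear extension starting from $f(p)=k$. The trade-off is clear: your version requires an extra induction to certify that the greedy step never runs out of labels (proved simultaneously with the bound $f\leq\max R$ on the filter), but in exchange it needs no antichain, and it reduces the cross-boundary checks to the single mixed case $q_1\not\geq p$, $q_2\geq p$, settled by $\min(R(q_1))<\min(R(q_2))\leq f(q_2)$. Your greedy filling is in fact closer in spirit to the paper's later layer-by-layer construction of the meet irreducibles of $\inc{R}{P}$ than to its proof of this particular proposition. Both arguments are complete.
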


\begin{proof}
Assume $R$ is consistent. Pick an arbitrary $p\in P$ and $k\in R(p)$. Let $A$ be a maximal antichain in $P$ containing $p$. Let $B$ be the elements of $P$ strictly less than some element of $A$, and $C$ the set of elements in $P$ strictly greater than some element of $A$. By definition of $A$, every element of $P$ is in exactly one of $A$, $B$, or $C$. Define a function $f:P\mapsto \mathcal{P}(\mathbb{Z})$ by letting $f(p)=k$, $f(b)=\min(R(b))$ for $b\in B$, $f(c)=\max(R(c))$ for $c\in C$, and arbitrarily choosing $f(a)$ to be any element of $R(a)$. One can readily check from the definition of consistent that this defines an increasing labeling.

Conversely, assume $R$ is not consistent. Then there is some pair $p_1\lessdot p_2$ in $P$, with either $\min(R(p_1)) \geq \min(R(p_2))$ or $\max(R(p_1)) \geq \max(R(p_2))$. In the former case, there is clearly no increasing labeling where $f(p_2)=\min(R(p_2))$, and in the latter case there is clearly no labeling where $f(p_1)=\max(R(p_1))$.
\end{proof}

We now consider $\inc{R}{P}$ as a partially ordered set, where $f\leq g$ if and only if $f(p)\leq g(p)$ for all $p\in P$. Furthermore, it is a lattice, with meet given by $(f \wedge g)(p)=\min(f(p),g(p))$ and join given by $(f \vee g)(p)=\max(f(p),g(p))$. One may easily check that this lattice is distributive, so we may apply Birkhoff's Representation Theorem (also known as the fundamental theorem of finite distributive lattices), which says that every finite distributive lattice is isomorphic to the lattice of order ideals for some associated poset. 

\begin{remark}
\label{rem:meetvsjoin}
Typically with the fundamental theorem of finite distributive lattices, one represents the lattice as the lattice of order ideals on the induced subposet of its join irreducible elements. An equivalent formulation is to represent the lattice as the lattice of order filters on the induced subposet of its meet irreducible elements. We will then take the dual to obtain a representation of our lattice by order ideals. See \cite[Chapter 3]{Stanley1986} for further background/terminology.
\end{remark}

Now, we use the following definition and lemmas in Theorem~\ref{thm:meetirred} to describe the meet irreducible elements of $\inc{R}{P}$.

\begin{definition}
Say that an element $p\in P$ in an increasing labeling $f\in\inc{R}{P}$ is \emph{raisable} if there exists another increasing labeling $g\in\inc{R}{P}$ where $f(p)<g(p)$, and $f(p')=g(p')$ for all $p'\in P$, $p'\neq p$. 
\end{definition}

\begin{lemma}
If an increasing labeling has two raisable elements, then it is not meet irreducible.
\end{lemma}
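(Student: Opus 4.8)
The plan is to use the standard characterization that $f$ fails to be meet irreducible precisely when it can be written as $f = g_1 \wedge g_2$ for two elements $g_1, g_2 \in \inc{R}{P}$ with $g_1 \neq f \neq g_2$. So I will produce such a nontrivial meet decomposition directly from the hypothesis that $f$ has two raisable elements, exploiting the fact (established just above in the excerpt) that $\inc{R}{P}$ is a lattice with $(g_1 \wedge g_2)(p) = \min(g_1(p), g_2(p))$.

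First I would name the two distinct raisable elements $p_1 \neq p_2$ and invoke the definition of raisable to obtain witnessing labelings $g_1, g_2 \in \inc{R}{P}$: here $g_1$ agrees with $f$ at every element except that $g_1(p_1) > f(p_1)$, and $g_2$ agrees with $f$ at every element except that $g_2(p_2) > f(p_2)$. By construction $g_1 > f$ and $g_2 > f$ in the order on $\inc{R}{P}$, so in particular $g_1 \neq f$ and $g_2 \neq f$.

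Next I would compute $g_1 \wedge g_2$ pointwise, checking the three cases $p = p_1$, $p = p_2$, and $p \notin \{p_1, p_2\}$. Using $p_1 \neq p_2$, the labeling $g_2$ agrees with $f$ at $p_1$ and $g_1$ agrees with $f$ at $p_2$, so at $p_1$ we have $\min(g_1(p_1), g_2(p_1)) = \min(g_1(p_1), f(p_1)) = f(p_1)$, and symmetrically at $p_2$; off both points both values equal $f(p)$. Hence $(g_1 \wedge g_2)(p) = f(p)$ for all $p$, i.e.\ $g_1 \wedge g_2 = f$. This exhibits $f$ as the meet of two strictly larger elements of the lattice, so $f$ is not meet irreducible.

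There is essentially no obstacle here: the argument is a short pointwise computation. The only point requiring care is the distinctness $p_1 \neq p_2$, which is exactly what guarantees that the witness raising $p_1$ leaves the $p_2$-coordinate (and vice versa) untouched, so that neither witness interferes with the other and the minimum collapses back to $f$.
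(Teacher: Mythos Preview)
Your proposal is correct and takes essentially the same approach as the paper: obtain the two witnesses $g_1,g_2$ from the definition of raisable and observe that $f = g_1 \wedge g_2$. You spell out the pointwise computation and the distinctness $g_i \neq f$ in more detail than the paper (which simply asserts ``clearly $f = g_1 \wedge g_2$''), but the argument is identical.
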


\begin{proof}
Let $f\in\inc{R}{P}$ have raisable elements $p_1,p_2\in P$. Then by definition, there are increasing labelings $g_1,g_2\in\inc{R}{P}$ with $f(p_1)<g_1(p_1)$, $f(p_2)<g_2(p_2)$, $f(p')=g_1(p')$ for $p'\neq p_1$, and
$f(p'')=g_2(p'')$ for $p''\neq p_2$. Then clearly $f=g_1 \wedge g_2$.
\end{proof}

So meet irreducibles must have zero or one raisable elements.

\begin{lemma}
There is only one meet irreducible in $\inc{R}{P}$ with zero raisable elements.
\end{lemma}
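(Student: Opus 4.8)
The plan is to identify the unique meet irreducible with no raisable elements as the maximum $\hat 1$ of the lattice $\inc{R}{P}$, proceeding in two steps: first exhibiting $\hat 1$ as a meet irreducible having zero raisable elements, and then showing that every other labeling has at least one raisable element.

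For existence, recall $\inc{R}{P}$ is a finite distributive lattice, so it has a maximum $\hat 1$, given pointwise by $\hat 1(p)=\max_{f\in\inc{R}{P}} f(p)$. The essential point is to check that $\hat 1$ really is meet irreducible, rather than merely a labeling without raisable elements: this follows directly from the definition of meet, since $\hat 1=f\wedge g$ gives $\hat 1 = f\wedge g\le f\le\hat 1$ and hence $f=\hat 1$, and likewise $g=\hat 1$, so $\hat 1$ is not a meet of two labelings distinct from it. That $\hat 1$ has no raisable element is then immediate: raising a single $p$ would produce a labeling strictly above $\hat 1$, contradicting maximality.

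For uniqueness it suffices to show that every $f\ne\hat 1$ has a raisable element, for then $\hat 1$ is the only labeling, and a fortiori the only meet irreducible, with zero raisable elements. Since $f<\hat 1$, the labeling $f$ has an upper cover $g$ in the lattice, and I would prove that such a cover differs from $f$ in exactly one coordinate. Setting $S=\{p: f(p)<g(p)\}$ and supposing $|S|\ge 2$, choose $p^\ast$ \emph{minimal} in $S$ and define $h$ by $h(p^\ast)=f(p^\ast)$ and $h=g$ off $p^\ast$. Then $h\in\inc{R}{P}$: across a cover $q\lessdot p^\ast$, minimality of $p^\ast$ forces $q\notin S$, so $h(q)=f(q)<f(p^\ast)=h(p^\ast)$; across $p^\ast\lessdot r$ we have $h(p^\ast)=f(p^\ast)<f(r)\le g(r)=h(r)$; and all covers avoiding $p^\ast$ are fine since $h$ agrees with $g$ there. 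This gives $f<h<g$, contradicting that $g$ covers $f$. Hence $|S|=1$, and the unique element of $S$ is raisable in $f$.

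The main obstacle is this cover computation, and specifically the decision to lower a \emph{minimal} element of $S$: lowering a maximal element could collide with a smaller element of $S$ lying beneath it, whereas no element of $S$ lies below a minimal one, which is exactly what keeps $h$ increasing. Granting that covers move a single coordinate, the two steps combine to show that $\hat 1$ is the unique meet irreducible of $\inc{R}{P}$ with zero raisable elements.
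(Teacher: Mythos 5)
Your proof is correct, but it takes a genuinely different route from the paper's. The paper argues directly and recursively: it peels $P$ from the top, letting $I_1$ be the maximal elements and $I_n$ the maximal elements of what remains, and shows layer by layer (using consistency of $R$) that a labeling with no raisable elements must assign $\max(R(p))$ to every $p$; this pins down the unique candidate explicitly as $f(p)=\max(R(p))$, a description the paper reuses downstream (for instance, its meet irreducibles assign $\max(R(p'))$ off the principal ideal of the indexed element, and the empty order ideal of $\Gamma(P,R)$ corresponds to exactly this labeling). You instead work abstractly in the lattice: you note that the top element $\hat 1$ has no raisable elements and is trivially meet irreducible under the binary-meet definition the lemma presupposes, and then prove the structural fact that covering relations in $\inc{R}{P}$ change exactly one coordinate; your choice to lower a \emph{minimal} element of $S$ is exactly the right move, and the verification that $h$ remains increasing and sits strictly between $f$ and $g$ is complete, so every $f\neq\hat 1$ has a raisable coordinate. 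Your route buys a clean, reusable lemma about covers in these labeling lattices and needs nothing about $R$ beyond nonemptiness of $\inc{R}{P}$, whereas the paper's route buys the explicit formula $\hat 1(p)=\max(R(p))$ that its later constructions lean on. One shared wrinkle, which is not a gap in your argument: under the stricter convention that $\hat 1$, being the empty meet, is not meet irreducible, the lemma's count would be zero; both you and the paper implicitly use the binary definition here, and the top labeling is then silently set aside when the meet irreducibles are indexed by pairs $(p,k)$ in the subsequent proposition.
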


\begin{proof}
 Let $I_1$ be the set of maximal elements of $P$, and recursively let $I_n$ be the maximal elements of $P\setminus \displaystyle\cup_{j=1}^{n-1} I_j$. Clearly the maximal elements have no restrictions above them limiting how large they can be, so if they are not raisable elements, then they all must be assigned the maximum possible label. Then recursively, we see that the only thing limiting $I_n$ is entries in $I_{n-1}$, but if all entries in $I_{n-1}$ are as large as possible, then all entries in $I_n$ must be as large as possible to keep from being raisable. Thus, the only increasing labeling with no raisable elements is the one given by $f(p)=\max(R(p))$. 
\end{proof}

\begin{definition}
Let $R(p)^*$ denote $R(p)$ with its largest element removed.
\end{definition}

\begin{lemma}
For every element $p\in P$ and $k\in R(p)^*$, there is exactly one meet irreducible in $\inc{R}{P}$ with $f(p)=k$ and $p$ as its only raisable element.
\end{lemma}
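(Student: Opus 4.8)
The plan is to construct the claimed meet irreducible explicitly as the top element of a well-chosen sublattice, and then to pin down uniqueness by a covering argument carried out inside that sublattice. Throughout I assume $R$ is consistent (equivalently, by Proposition~\ref{thm:inclabeliffconsistent}, that the value $k$ is actually attained at $p$), so that $M := \{g \in \inc{R}{P} : g(p)=k\}$ is nonempty. Since $(g\vee h)(p)=\max(g(p),h(p))$ and $(g\wedge h)(p)=\min(g(p),h(p))$, the set $M$ is closed under meet and join, hence is a distributive sublattice of $\inc{R}{P}$ with a unique top element $f := \bigvee M$, which I would take to be the desired meet irreducible. First I would record the shape of $f$: for every $q$ with $q\not<p$ one has $f(q)=\max(R(q))$ (these entries are unconstrained from above by the requirement $g(p)=k$, and the all-maximal labeling is increasing), while $f(p)=k$. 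In particular, for each cover $p\lessdot q$, consistency gives $R(p)_{>k}\le \max(R(p))<\max(R(q))=f(q)$, so raising $p$ from $k$ to $R(p)_{>k}$ keeps $f$ increasing; thus $p$ is raisable in $f$, which also forces $k\neq\max(R(p))$, consistent with $k\in R(p)^{*}$.

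Next I would check that $p$ is the only raisable element of $f$ and that $f$ is meet irreducible. If some $q\neq p$ were raisable, the resulting labeling would still have value $k$ at $p$, hence lie in $M$ and strictly exceed $f=\max M$, a contradiction; so $p$ is the unique raisable element. For meet irreducibility I would show $f$ has a unique upper cover. Let $f^{+}$ be $f$ with the entry at $p$ raised to $R(p)_{>k}$; by the previous paragraph $f^{+}\in\inc{R}{P}$ and $f^{+}>f$. For any $g>f$ we have $g\notin M$ (as $f=\max M$) and $g(p)\ge f(p)=k$, so $g(p)>k$ and therefore $g(p)\ge R(p)_{>k}=f^{+}(p)$; since also $g(q)\ge f(q)=f^{+}(q)$ for $q\neq p$, we get $g\ge f^{+}$. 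Hence $f^{+}$ is the minimum of $\{g:g>f\}$, i.e.\ the unique upper cover of $f$, so $f$ is meet irreducible with $f(p)=k$ and $p$ as its only raisable element.

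For uniqueness, suppose $f'$ is any meet irreducible with $f'(p)=k$ and $p$ its unique raisable element. Then $f'\in M$, so $f'\le \max M=f$. Assume for contradiction $f'<f$. Because $M$ is a finite lattice with top $f$ and $f'\neq f$, the element $f'$ has an upper cover $h$ inside $M$. The key point is that $h$ is also an upper cover in the full lattice $\inc{R}{P}$: any $e$ with $f'\le e\le h$ satisfies $k=f'(p)\le e(p)\le h(p)=k$, so the whole interval $[f',h]$ lies in $M$, and any strict interpolant in $\inc{R}{P}$ would already be one in $M$. On the other hand, raising $p$ in $f'$ (valid, since $p$ is raisable in $f'$) produces an upper cover ${f'}^{+}$ of $f'$ with ${f'}^{+}(p)>k$, so ${f'}^{+}\notin M$ and hence ${f'}^{+}\neq h$. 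Thus $f'$ has two distinct upper covers, contradicting meet irreducibility. Therefore $f'=f$, and the meet irreducible is unique.

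The step I expect to be the main obstacle is establishing meet irreducibility itself, in both the existence and uniqueness parts. The tempting shortcut---that the upper covers of a labeling are in bijection with its raisable elements---is \emph{false} once $R$ has gaps, since a single covering step in the lattice may be forced to raise several entries simultaneously (for instance, when the next available label at a low element jumps past the current label of an element above it, so that no single entry can be raised on its own). The device that circumvents this is the sublattice $M$ together with the observation that any lattice interval between two elements of $M$ stays in $M$; this lets me both identify the unique upper cover of $f$ and transfer a hypothetical second cover of $f'$ from $M$ back to $\inc{R}{P}$, without ever having to classify the multi-entry covers directly.
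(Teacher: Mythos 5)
Your proof is correct, and it takes a genuinely different route from the paper's. The paper constructs the candidate labeling explicitly---layer by layer down the principal order ideal of $p$, taking each entry as large as its covers permit and $\max(R(\cdot))$ elsewhere---then verifies meet irreducibility by directly analyzing a hypothetical decomposition $f=g_1\wedge g_2$; uniqueness there comes from arguing that ``$p$ is the only raisable element'' forces every other entry to be maximal subject to its covers, which pins the labeling down recursively. You instead work inside the sublattice $M=\{g: g(p)=k\}$, take $f=\bigvee M$, and replace the direct check with the standard equivalence between meet irreducibility and having a unique upper cover, exhibiting $f^{+}$ (raise $p$ to $R(p)_{>k}$) as the minimum of everything strictly above $f$. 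Your uniqueness argument is the most novel piece: rather than showing the construction is forced, you produce two distinct upper covers of any other candidate $f'$ (one inside $M$, one leaving $M$), using the observation that lattice intervals between elements of $M$ stay in $M$. What your route buys is that it never needs an explicit description of $f$ below $p$ and never has to classify covering relations in $\inc{R}{P}$, which, as you correctly warn, need not correspond to raising a single entry once $R$ has gaps. What it costs is that the one claim requiring real work---that $\bigl(\bigvee M\bigr)(q)=\max(R(q))$ for all $q\not\le p$, which you need in order to see that $p$ is raisable in $f$---is asserted rather than proved; justifying it amounts to exhibiting an element of $M$ that is maximal off the down-set of $p$, i.e.\ essentially the paper's recursive, consistency-driven construction. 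This is a minor gap, comparable to the paper's own level of detail on the same point, but the sublattice machinery does not entirely dispense with that construction.
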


\begin{proof}
We will explicitly construct the only possible increasing labeling having $f(p)=k$ and $p$ as its only raisable element, and then show it is meet irreducible.

Let $I$ be the principle order ideal associated to $p$. Let $I_0=\{p\}$. Then recursively let $I_n$ be the maximal elements of $I\setminus\cup_{j=0}^{n-1} I_j$. In order for each $I_j$ to not have any raisable elements, each entry must be as large as possible while still respecting any relevant inequalities from $I_{j-1}$. So we may recursively define $f$ on each layer. For $p_1\in I_j$, we let $f(p_1)$ be the largest label in $R(p_1)$ that is less than $f(p_2)$ for every $p_1\lessdot p_2$ with $p_2$ in $I_{j-1}$. The restriction that $R$ is consistent ensures that such a largest label will always exist.

We may apply a similar logic to the elements of $P\setminus I$, except in this case there is no initial condition like $f(p)=k$, so every $p_1\in P\setminus I$ must have $f(p_1)=\max(R(p_1))$. The restriction that $R$ is consistent ensures that this will create an increasing labeling.

Now, we prove that this increasing labeling is meet irreducible. Assume that we have $f=g_1\wedge g_2$. Since $f$ takes the maximum possible value on elements incomparable to $p$, both $g_1$ and $g_2$ must also take the maximum value on these elements. Also, at least one of the two must map $p$ to $k$, so without loss of generality say $g_1(p)=k$. Then for every element less than $p$, $g_1$ must assign that element to something at least as big as $f$ does. But by construction of $f$, there is no larger possible assignment. Thus, $g_1=f$, and $f$ is meet irreducible.
\end{proof}

These lemmas yield the following proposition.

\begin{proposition}
\label{thm:meetirred}
The meet irreducibles of $\inc{R}{P}$ can be indexed by pairs $(p,k)$, where $p\in P$ and $k\in R(p)^*$.
\end{proposition}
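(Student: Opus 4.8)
The plan is to read the proposition off directly from the three preceding lemmas, which together classify the meet irreducibles of $\inc{R}{P}$ according to the number of raisable elements they possess. First I would invoke the first lemma to reduce to two exhaustive cases: since a labeling with two raisable elements is a proper meet $f = g_1 \wedge g_2$ and hence not meet irreducible, every meet irreducible has either zero or exactly one raisable element.

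For the zero-raisable case I would use the second lemma, whose proof shows that the only such labeling is $f(p)=\max(R(p))$ for all $p$. This is the top element $\hat{1}$ of the lattice $\inc{R}{P}$, i.e.\ the improper (empty-meet) meet irreducible. In the Birkhoff representation described in Remark~\ref{rem:meetvsjoin} one passes to the poset of meet irreducibles and takes order filters, and $\hat{1}$ corresponds to the empty filter; it therefore contributes no element to the poset $\Gamma(P,R)$ and no pair to the index set.

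For the remaining, one-raisable case I would apply the third lemma, which supplies, for each $p\in P$ and each $k\in R(p)^*$, a unique meet irreducible $f$ whose sole raisable element is $p$ and with $f(p)=k$; call this assignment $\alpha$. In the reverse direction, a meet irreducible with a single raisable element $p$ determines the pair $(p,f(p))$; since $p$ is raisable, $f(p)$ cannot be the largest available label, so $f(p)\in R(p)^*$, and this gives a well-defined map $\beta$. A short check against the uniqueness clause of the third lemma shows $\alpha$ and $\beta$ are mutually inverse, yielding the asserted bijection between the proper meet irreducibles of $\inc{R}{P}$ and the pairs $(p,k)$ with $p\in P$ and $k\in R(p)^*$.

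Because all of the real content already resides in the three lemmas, I do not expect a genuine obstacle; the proposition is essentially bookkeeping. The one point that requires care is the handling of the top of the lattice: one must recognize the unique zero-raisable labeling as $\hat{1}$ and set it aside, so that the surviving meet irreducibles match the pairs $(p,k)$ exactly. The only other thing to verify is that raisability of $p$ forces its value into $R(p)^*$ rather than all of $R(p)$, which is immediate since a raisable element cannot already carry the maximum label in $R(p)$.
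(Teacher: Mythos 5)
Your proposal is correct and follows exactly the route the paper intends: the paper gives no written proof, stating only that ``These lemmas yield the following proposition,'' and your argument is precisely that bookkeeping, combining the three lemmas and checking the two directions of the correspondence against the uniqueness clause of the third lemma. Your explicit handling of the all-maximal labeling as the top element to be set aside is a point the paper glosses over (its second lemma even calls that labeling a meet irreducible), so your care there is warranted rather than superfluous.
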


See Figure~\ref{fig:gamma1tlexample} for an example.

If we make a poset of these meet irreducibles with the induced partial order on increasing labelings, then order filters in this poset will be in bijection with increasing labelings. Since we prefer to work with order ideals, we will then take the dual of this poset. This means we want to make a poset on pairs $(p,k)$, where $p\in P$ and $k\in R(p)^*$ where $(p_1,k_1)\leq (p_2,k_2)$ if and only if the meet irreducible corresponding to $(p_2,k_2)$ is less than or equal to the meet irreducible for $(p_1,k_1)$ in the natural ordering on $\inc{R}{P}$.
Since the meet irreducibles are recursively defined by making things beneath the indexed element as large as possible, it is more natural to describe the covering relations than the general partial order relations.

\begin{figure}[htbp]
\begin{tikzpicture}
%\begin{scope}[xshift=0cm]
%\coordinate (a) at (0,0);
%\coordinate (b) at (-1,2);
%\coordinate (c) at (1,1);
%\coordinate (d) at (1,3);
%\coordinate (e) at (0,4);
%\draw[] (a) -- (b) -- (e) -- (d) -- (c) -- cycle ;
%\node[fill=white,draw,circle,inner sep=.5ex] at (a) {a};
%\node[fill=white,draw,circle,inner sep=.5ex] at (b) {b};
%\node[fill=white,draw,circle,inner sep=.5ex] at (c) {c};
%\node[fill=white,draw,circle,inner sep=.5ex] at (d) {d};
%\node[fill=white,draw,circle,inner sep=.5ex] at (e) {e};
%\node [below=.25cm] at (a) {\{1,4\}};
%\node [left=.25cm] at (b) {\{2,3,5\}};
%\node [right=.25cm] at (c) {\{2,4,5\}};
%\node [right=.25cm] at (d) {\{3,4,5,6\}};
%\node [above=.25cm] at (e) {\{4,6,7,9\}};
%\end{scope}

\begin{scope}[xshift=0cm]
\coordinate (a) at (0,0);
\coordinate (b) at (-1,2);
\coordinate (c) at (1,1);
\coordinate (d) at (1,3);
\coordinate (e) at (0,4);
\draw[] (a) -- (b) -- (e) -- (d) -- (c) -- cycle ;
\node[fill=white,draw,circle,inner sep=.5ex] at (a) {a};
\node[fill=white,draw,circle,inner sep=.5ex] at (b) {b};
\node[fill=white,draw,circle,inner sep=.5ex] at (c) {c};
\node[fill=white,draw,circle,inner sep=.5ex] at (d) {d};
\node[fill=white,draw,circle,inner sep=.5ex] at (e) {e};
\node [below=.25cm] at (a) {1};
\node [left=.25cm] at (b) {3};
\node [right=.25cm] at (c) {5};
\node [right=.25cm] at (d) {6};
\node [above=.25cm] at (e) {9};
\end{scope}
\begin{scope}[xshift=5cm]
\coordinate (a) at (0,0);
\coordinate (b) at (-1,2);
\coordinate (c) at (1,1);
\coordinate (d) at (1,3);
\coordinate (e) at (0,4);
\draw[] (a) -- (b) -- (e) -- (d) -- (c) -- cycle ;
\node[fill=white,draw,circle,inner sep=.5ex] at (a) {a};
\node[fill=white,draw,circle,inner sep=.5ex] at (b) {b};
\node[fill=white,draw,circle,inner sep=.5ex] at (c) {c};
\node[fill=white,draw,circle,inner sep=.5ex] at (d) {d};
\node[fill=white,draw,circle,inner sep=.5ex] at (e) {e};
\node [below=.25cm] at (a) {1};
\node [left=.25cm] at (b) {5};
\node [right=.25cm] at (c) {4};
\node [right=.25cm] at (d) {5};
\node [above=.25cm] at (e) {9};
\end{scope}
\end{tikzpicture}
\caption{Meet irreducible increasing labelings indexed by $(b,3)$ (left) and by $(d,5)$ (right) for the poset and restriction function shown in the top of Figure~\ref{fig:TogIsProEx_intro}.}
\label{fig:gamma1tlexample}
\end{figure}
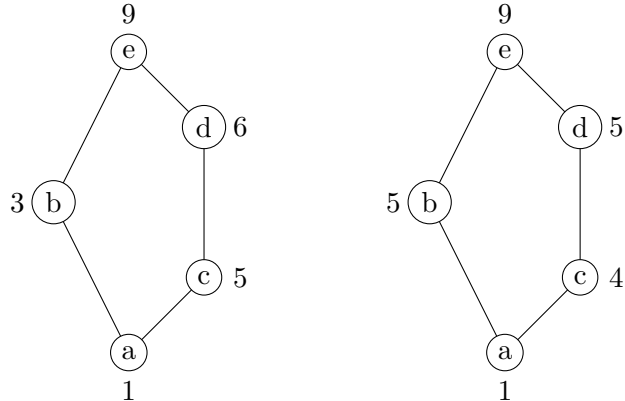

We will first introduce some convenient notation.
\begin{definition}
Let $R(p)_{>k}$ be the smallest label of $R(p)$ that is larger than $k$, and let $R(p)_{<k}$ be the largest label of $R(p)$ less than $k$.
\end{definition}

\begin{definition}
\label{def:GammaOne}
Let $P$ be a poset and $R$ a consistent map of possible labels. Then define $\Gamma(P,R)$ to be the poset whose elements are $(p,k)$ with $p\in P$ and $k\in R(p)^*$, and covering relations given by $(p_1,k_1)\lessdot (p_2,k_2)$ if and only if either

\begin{enumerate}
\item  $p_1=p_2$ and $R(p_1)_{>k_2}=k_1$ (i.e., $k_1$ is the next largest possible label after $k_2$), or

\item $p_1\lessdot p_2$ (in $P$), $k_1=R(p_1)_{<k_2}\neq \max(R(p_1))$, and no greater $k$ in $R(p_2)$ has $k_1=R(p_1)_{<k}$. That is to say, $k_1$ is the largest label of $R(p_1)$ less than $k_2$ ($k_1\neq \max(R(p_1)))$, and there is no greater $k\in R(p_2)$ having $k_1$ as the largest label of $R(p_1)$ less than $k$.
\end{enumerate}
\end{definition}

Regarding Remark~\ref{rem:meetvsjoin}, whether we proceeded with meet or join irreducibles, they would be both be indexed by pairs $(p,k)$, and the induced relations would give $(p_1,k_1)\leq (p_2,k_2)$ implies $p_2\leq p_1$. Since we prefer a representation where the order on $P$ is preserved instead of reversed, we would be taking a dual either way. Starting with order filters and dualizing naturally gives us order ideals, while starting with order ideals would require us to pass to the complementary order filter before dualizing.

\begin{remark}
In $\Gamma(P,R)$, we lose the information about $\max(R(p))$ for each $p\in P$. So when we draw $\Gamma(P,R)$, we add a label $(p,\max(R(p)))$ underneath the chain of elements of the form $(p,k)$ for $k\in R(p)^*$. This is a reminder that when an order ideal contains no elements of the form $(p,k)$, in the corresponding increasing labeling, the element $p$ is sent to $\max(R(p))$.
\end{remark}

\begin{example}
\label{ex:gamma1tlexample}
We show an example of the construction of $\Gamma(P,R)$ using the poset $P$ and restriction $R$ given by Figure \ref{fig:gamma1tlexample}, left. For each element of $P$, Condition (1) of Definition~\ref{def:GammaOne} gives us a vertical chain in $\Gamma(P,R)$. For example, $e$ has restricted entries $\{4,6,7,9\}$, which results in a chain in $\Gamma(P,R)$ with elements that correspond to 4, 6, and 7. As the largest restricted entry in $R(e)$, the 9 does not correspond to an element of $\Gamma(P,R)$. See Figure \ref{fig:gamma1tlexample}, right. Condition (2) gives us covering relations between these vertical chains. The first part of Condition (2) 
is essential for the bijection between order ideals of $\Gamma(P,R)$ and increasing labelings of $P$ with restriction function $R$. The second part of Condition (2)
is necessary in defining covering relations. For example, in Figure \ref{fig:gamma1tlexample}, consider $a$ and $b$ to be $p_1$ and $p_2$ respectively. If $k_2=2$, 1 is the largest label of $p_1$ less than $k_2$. However, because this also occurs when $k_2=3$ and $(p_2,3)\lessdot(p_2,2)$, we do not need a covering relation between $(p_1,1)$ and $(p_2,2)$ in $\Gamma(P,R)$.
 \end{example}

\begin{figure}[htbp]
\begin{tikzpicture}
\begin{scope}[xshift=0cm]
\coordinate (a) at (0,0);
\coordinate (b) at (-1,2);
\coordinate (c) at (1,1);
\coordinate (d) at (1,3);
\coordinate (e) at (0,4);
\draw[] (a) -- (b) -- (e) -- (d) -- (c) -- cycle ;
\node[fill=white,draw,circle,inner sep=.5ex] at (a) {a};
\node[fill=white,draw,circle,inner sep=.5ex] at (b) {b};
\node[fill=white,draw,circle,inner sep=.5ex] at (c) {c};
\node[fill=white,draw,circle,inner sep=.5ex] at (d) {d};
\node[fill=white,draw,circle,inner sep=.5ex] at (e) {e};
\node [below=.25cm] at (a) {\{1,4\}};
\node [left=.25cm] at (b) {\{2,3,5\}};
\node [right=.25cm] at (c) {\{2,4,5\}};
\node [right=.25cm] at (d) {\{3,4,5,6\}};
\node [above=.25cm] at (e) {\{4,6,7,9\}};
\end{scope}

\begin{scope}[xshift=8cm,scale=1.4]
\coordinate (a1) at (0,0);
\coordinate (a4) at (0,-.75);
\coordinate (b2) at (-2,2);
\coordinate (b3) at (-2,1);
\coordinate (b5) at (-2,0.25);
\coordinate (c2) at (2,2);
\coordinate (c4) at (2,1);
\coordinate (c5) at (2,0.25);
\coordinate (d3) at (1,4);
\coordinate (d4) at (1,3);
\coordinate (d5) at (1,2);
\coordinate (d6) at (1,1.25);
\coordinate (e4) at (0,5);
\coordinate (e6) at (0,4);
\coordinate (e7) at (0,3);
\coordinate (e9) at (0,2.25);
\draw[] (b2) -- (b3);
\draw[] (c2) -- (c4);
\draw[] (d3) -- (d4) -- (d5);
\draw[] (e4) -- (e6) -- (e7);
\draw[] (a1) -- (b3);
\draw[] (a1) -- (c4);
\draw[] (b3) -- (e4);
\draw[] (c4) -- (d5);
\draw[] (c2) -- (d4);
\draw[] (d5) -- (e6);
\draw[] (d3) -- (e4);
\node[fill=white,draw,circle,inner sep=.5ex] at (a1) {a,1};
\node[] at (a4) {a,4};
\node[fill=white,draw,circle,inner sep=.5ex] at (b2) {b,2};
\node[fill=white,draw,circle,inner sep=.5ex] at (b3) {b,3};
\node[] at (b5) {b,5};
\node[fill=white,draw,circle,inner sep=.5ex] at (c2) {c,2};
\node[fill=white,draw,circle,inner sep=.5ex] at (c4) {c,4};
\node[] at (c5) {c,5};
\node[fill=white,draw,circle,inner sep=.5ex] at (d3) {d,3};
\node[fill=white,draw,circle,inner sep=.5ex] at (d4) {d,4};
\node[fill=white,draw,circle,inner sep=.5ex] at (d5) {d,5};
\node[] at (d6) {d,6};
\node[fill=white,draw,circle,inner sep=.5ex] at (e4) {e,4};
\node[fill=white,draw,circle,inner sep=.5ex] at (e6) {e,6};
\node[fill=white,draw,circle,inner sep=.5ex] at (e7) {e,7};
\node[] at (e9) {e,9};
\end{scope}
\end{tikzpicture}
\caption{Left: A poset $P$ with restriction function $R$; Right: The corresponding poset $\Gamma(P,R)$.}
\label{fig:gamma1tlexample}
\end{figure}
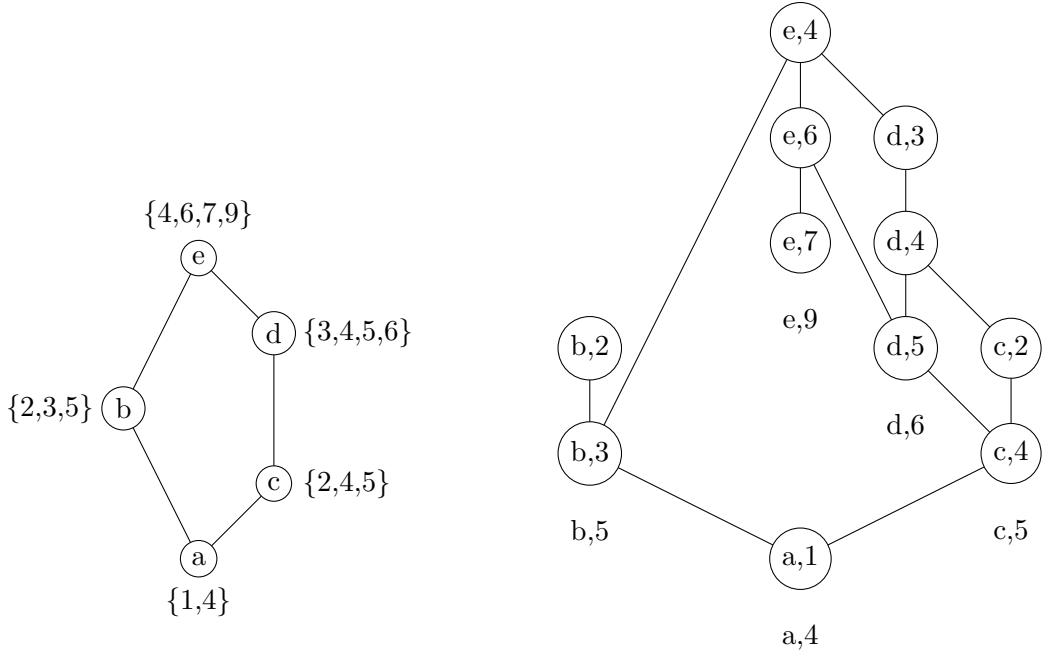

\begin{theorem}
\label{thm:Gamma1MeetIrred}
The poset $\Gamma(P,R)$ is isomorphic to the dual of the lattice of meet irreducibles of $\inc{R}{P}$. Therefore, order ideals of $\Gamma(P,R)$ are in bijection with $\inc{R}{P}$.
\end{theorem}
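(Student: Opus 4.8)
The plan is to leverage the fundamental theorem of finite distributive lattices in its meet-irreducible form (Remark~\ref{rem:meetvsjoin}) together with the identification of the meet irreducibles already obtained in Proposition~\ref{thm:meetirred}. Write $M$ for the induced subposet of meet irreducibles of $\inc{R}{P}$, and for each pair $(p,k)$ with $p\in P$ and $k\in R(p)^*$ let $m_{(p,k)}\in M$ denote the corresponding meet irreducible, namely the maximal increasing labeling with value $k$ at $p$ constructed in the lemmas preceding Proposition~\ref{thm:meetirred} (value $k$ at $p$, value $\max(R(p'))$ at every $p'\not\le p$, and the recursively-largest admissible value at every $p'<p$). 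Birkhoff's theorem says $\inc{R}{P}$ is isomorphic to the lattice of order filters of $M$, equivalently to $J(M^{\mathrm{op}})$. Since $\Gamma(P,R)$ has the same ground set as $M^{\mathrm{op}}$ by Proposition~\ref{thm:meetirred}, and a finite poset is determined by its cover relations, it suffices to prove that the covers of $M^{\mathrm{op}}$ are exactly those listed in Definition~\ref{def:GammaOne}; the final bijection statement is then immediate from Birkhoff.

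The key technical tool I would isolate first is a clean description of the order on $M$. From the maximality built into the construction of $m_{(p,k)}$, one checks that $m_{(p,k)}$ is the join of all increasing labelings taking value at most $k$ at $p$; consequently, for any $g\in\inc{R}{P}$ one has $g\le m_{(p,k)}$ if and only if $g(p)\le k$. Recalling that $\Gamma(P,R)=M^{\mathrm{op}}$, so that $(p_1,k_1)\le_{\Gamma}(p_2,k_2)$ means $m_{(p_2,k_2)}\le m_{(p_1,k_1)}$, applying this criterion with $g=m_{(p_2,k_2)}$ yields
\[
(p_1,k_1)\le_{\Gamma}(p_2,k_2)\quad\Longleftrightarrow\quad m_{(p_2,k_2)}(p_1)\le k_1 .
\]
Because $k_1<\max(R(p_1))$, this relation forces $p_1\le p_2$ (otherwise $m_{(p_2,k_2)}(p_1)=\max(R(p_1))>k_1$), matching the fact that both types of covers in Definition~\ref{def:GammaOne} have $p_1\le p_2$.

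From this criterion the two families of covers fall out. When $p_1=p_2=p$ the criterion reduces to $k_2\le k_1$, so on each fiber the order $\le_\Gamma$ is the reverse of the label order and its covers are exactly $k_1=R(p)_{>k_2}$; this is Condition~(1). When $p_1\ne p_2$, one analyzes $m_{(p_2,k_2)}(p_1)$ for $p_1$ below $p_2$: the relation holds as soon as $m_{(p_2,k_2)}(p_1)\le k_1$, and I would show that the covers among these relations are precisely the pairs with $p_1\lessdot p_2$ in $P$ for which $k_1=R(p_1)_{<k_2}$ is the tightest admissible lower label (so the relation is not slack) and for which no larger $k\in R(p_2)$ already realizes $k_1=R(p_1)_{<k}$ (so the relation does not factor through some $(p_2,k')$ via a Condition~(1) step). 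These are exactly the two clauses of Condition~(2).

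The main obstacle is this last cover analysis for Condition~(2). Two points require care. First, lowering a label propagates downward through the recursive construction of $m_{(p_2,k_2)}$, so covers in $M$ are generally \emph{not} covers of the ambient lattice $\inc{R}{P}$; all comparisons must be made within the induced subposet $M$, and I would argue primitivity by exhibiting, for any non-cover relation, an intermediate meet irreducible of the form $(p_2,k')$ or $(p_1',k')$. Second, the ``no greater $k$'' clause must be shown to correspond exactly to the absence of such an intermediate element, which is the content illustrated in Example~\ref{ex:gamma1tlexample}; verifying that this clause neither discards a genuine cover nor retains a redundant relation is the crux of the argument.
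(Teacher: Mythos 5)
Your overall route is the paper's: take the meet irreducibles as classified in Proposition~\ref{thm:meetirred}, invoke Birkhoff in the meet-irreducible/order-filter form of Remark~\ref{rem:meetvsjoin}, dualize, and then verify that the cover relations of the resulting poset are exactly those of Definition~\ref{def:GammaOne}. Where you genuinely improve on the paper is the isolated criterion $g\le m_{(p,k)}$ if and only if $g(p)\le k$, which is correct (it follows by induction down the layers $I_j$ of the principal order ideal of $p$, using that $m_{(p,k)}$ is built to be recursively maximal there and equal to $\max(R(\cdot))$ off that ideal) and which yields the clean characterization $(p_1,k_1)\le_\Gamma(p_2,k_2)\iff m_{(p_2,k_2)}(p_1)\le k_1$. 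The paper never states this; in fact its proof asserts that for $p_1\lessdot p_2$ the two meet irreducibles are comparable whenever $k_1<k_2$, which your criterion shows is too strong (for a two-chain with $R(p_1)=\{1,2,3\}$, $R(p_2)=\{2,3,4\}$, the elements $(p_1,1)$ and $(p_2,3)$ are incomparable since $m_{(p_2,3)}(p_1)=2>1$; the correct condition is $R(p_1)_{<k_2}\le k_1$). So your intermediate lemma is a real gain in precision.

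The gap is that you stop exactly at what you yourself call the crux: the Condition~(2) cover analysis is announced (``I would show that the covers among these relations are precisely\ldots'') but not carried out, and the quantity your criterion requires, namely $m_{(p_2,k_2)}(p_1)$ for $p_1<p_2$, is never computed. To close it you need: (a) for $p_1\lessdot p_2$, the identity $m_{(p_2,k_2)}(p_1)=R(p_1)_{<k_2}$, which holds because $p_2$ is then the \emph{only} upper cover of $p_1$ inside the principal order ideal of $p_2$ (any other would contradict $p_1\lessdot p_2$), so the relation reads $R(p_1)_{<k_2}\le k_1$; (b) the pruning: a relation with $k_1>R(p_1)_{<k_2}$ factors through $(p_1,R(p_1)_{<k_2})$, a relation for which some $k\in R(p_2)$ with $k>k_2$ has $R(p_1)_{<k}=k_1$ factors through $(p_2,k)$ (noting that when that $k$ can only be $\max(R(p_2))$, consistency forces $k_1=\max(R(p_1))$, so this case is already excluded by the clause $k_1\ne\max(R(p_1))$ -- a point worth checking explicitly since $(p_2,\max(R(p_2)))$ is not an element of $\Gamma(P,R)$); and (c) for $p_1<p_2$ not a cover in $P$, an intermediate of the form $(p',m_{(p_2,k_2)}(p'))$ with $p_1\lessdot p'<p_2$, including the verification that $m_{(p_2,k_2)}(p')<\max(R(p'))$ so that this pair indexes an actual element. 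All of these go through with your criterion, but as written the proposal is an outline of the hardest part of the theorem rather than a proof of it.
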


\begin{proof}
We will show that we have a covering relation of the meet irreducible indexed by $(p_2,k_2)$ being covered by the meet irreducible indexed by $(p_1,k_1)$ in the lattice of meet irreducibles of $\inc{R}{P}$ precisely when the conditions given for $(p_1,k_1)$ being covered by $(p_2,k_2)$ in $\Gamma(P,R)$ hold.

First, we show that the meet irreducible indexed by $(p_2,k_2)$ can only be covered by the meet irreducible indexed by $(p_1,k_1)$ if $p_1=p_2$, or $p_1\lessdot p_2$ in $P$.

Let $f\in\inc{R}{P}$ be the meet irreducible indexed by $(p_1,k_1)$ and $g$ the meet irreducible indexed by $(p_2,k_2)$. If $p_1$ and $p_2$ are incomparable, then $f(p_1)=k_1<\max(R(p_1))=f(p_2)$ and $g(p_2)=k_2<\max(R(p_2))=g(p_1)$, and thus the two meet irreducibles are incomparable. 

Similarly, if $p_1>p_2$ in $P$, then $g(p_2)=\max(R(p_2))>k_1=f(p_2)$, and thus the meet irreducible indexed by $(p_2,k_2)$ is not less than the meet irreducible indexed by $(p_1,k_1)$.

Now assume that $p_1<p_2$, but is not a covering relation in $P$. We know that $f(p_2)=\max(R(p_2))>k_2=g(p_2)$. If $g(p_1)>k_1$, then they are incomparable. If $g(p_1)\leq k_1 < \max(R(p_1))$, then there is some $p'$ with $p_1\lessdot p'$ and $g(p')<\max(R(p'))$. This is because the meet irreducible $g$ is defined so that everything is as large as it can possibly be for a valid increasing labeling, subject only to the condition that $g(p_2)=k_2$, and if all $p_1\lessdot p'$ had $g(p')=\max(R(p'))$, then $g(p_1)$ would be $\max(R(p_1))$. One can check that the meet irreducible indexed by $(p',g(p'))$ will be strictly between $f$ and $g$.

When $p_1=p_2=p$, then $g$ is less than or equal to $f$ if and only if $k_2\leq k_1$. The only thing we need to check is that no other meet irreducibles lie between them when there is no value of $k$ in $R(p)$ between $k_1$ and $k_2$. We only need to verify for meet irreducibles indexed by $(p',k)$ where either $p'\lessdot p$ or $p\lessdot p'$.

Assume there is no entry in $R(p)$ between $k_1$ and $k_2$. Assume that we had a meet irreducible $h$ indexed by $(p',k)$ such that $g<h<f$ in $\inc{R}{P}$. As there is no entry in $R(p)$ between $k_1$ and $k_2$, we must have $p'\neq p$. If $p'<p$, then $h(p)=\max(R(p))$ and $f(p)=k_1<\max(R(p))$, contradicting $h<f$. Similarly, if $p<p'$, then $g(p')=\max(R(p'))$, and $h(p')=k<\max(R(p'))$, contradicting $g<h$.

Lastly, we consider when $p_1\lessdot p_2$ in $P$. Then we can only fail to be a covering relation between $g$ and $f$ if there is a meet irreducible $h$ indexed by $(p,k)$ such that $g<h<f$. Without loss of generality, we can assume that $h\lessdot f$ in $\inc{R}{P}$. This means we must have $p_1=p$ or $p_1\lessdot p$. If $p_1=p$, then $h$ can exist if any only if there is a larger label of $R(p_1)$ that is still less than $k_2$. If $p_1\lessdot p$, then since $p<p_2$ and $p_1\lessdot p_2$, we must have $p=p_2$. Then $k$ would be a greater element of $R(p_2)$ having $k_1$ as the largest label in $R(p_1)$ less than $k$.

%Now, we need to figure out which of these relations can be transitively implied by other relations to determine the covering relations. If there is a $k\in R(p_1)$ with $k_1<k<k_2$, consider the meet irreducible indexed by $(p_1,k)$. Then the relation between $(p_2,k_2)$ and $(p_1,k_1)$ is not a covering relation, as it is transitively implied by the relations $(p_2,k_2)<(p_1,k)$  and $(p_1,k)<(p_1,k_1)$. Similarly, if there is a $k\in R(p_2)$ with $k_1<k<k_2$, then the relation between $(p_2,k_2)$ and $(p_1,k_1)$ is not a covering relation, as it is transitively implied by the relations $(p_2,k_2)<(p_2,k)$  and $(p_2,k)<(p_1,k_1)$.

%Lastly, we need to determine the relations (and covering relations) between the meet irreducibles indexed by $(p_1,k_1)$ and $(p_2,k_2)$ when $p_1\lessdot p_2$. For the general relation, we will have $(p_2,k_2)\leq (p_1,k_1)$ when $k_1<k_2$. Now, we need to figure out which of these relations can be transitively implied by other relations to determine the covering relations. If there is a $k\in R(p_1)$ with $k_1<k<k_2$, consider the meet irreducible indexed by $(p_1,k)$. Then the relation between $(p_2,k_2)$ and $(p_1,k_1)$ is not a covering relation, as it is transitively implied by the relations $(p_2,k_2)<(p_1,k)$  and $(p_1,k)<(p_1,k_1)$. Similarly, if there is a $k\in R(p_2)$ with $k_1<k<k_2$, then the relation between $(p_2,k_2)$ and $(p_1,k_1)$ is not a covering relation, as it is transitively implied by the relations $(p_2,k_2)<(p_2,k)$  and $(p_2,k)<(p_1,k_1)$.

\end{proof}

\begin{remark}
Note that if $R(p)$ consists of a single element, then there is no element in $\Gamma(P,R)$ indexed by $p$. This is not an issue, because the only restrictions on an increasing labeling $f$ involving $p$ are that $f(p_1)<f(p)$ if $p_1\lessdot p$ and $f(p)<f(p_2)$ if $p\lessdot p_2$. The restriction that $R$ is consistent forces $f(p_1)\leq \max(R(p_1)) <\max(R(p))=f(p)$ and $f(p)=\min(R(p))<\min(R(p_2))\leq f(p_2)$.
\end{remark}

\begin{remark}
Presumably, one could extend this to the case where $P$ is infinite with sufficient care. Additionally, one could also extend this to the case where $R(p)$ is possibly unbounded. Consistency would require that things unbounded above be covered by things also unbounded above, and similarly that things unbounded below only cover things also unbounded below. Then the construction of $\Gamma(P,R)$ would have exactly the same definition, but its order ideals would be in bijection with increasing labelings of $P$ restricted by $R$ where entries unbounded above may be labeled $+\infty$ and things unbounded below may be labeled $-\infty$ (allowing $+\infty<+\infty$ and $-\infty<-\infty$).
\end{remark}

\subsection{Restricting the global set of labels}
\label{subsec:q}
One special case of interest is when the only restriction we place is that the  labels are in the bounded set $[q]=\{1,\ldots, q\}$. For example, in $K$-theory of the Grassmannian, increasing tableaux that only use the labels between $1$ and some fixed number $q$ are of interest \cite{BPS2016, BKSTY2008, DPS2015, Pechenik2017, TY2009, TY2011}. We denote this set $\inc{q}{P}$.

However, it is not as simple as setting $R(p)=[q]$ for every $p\in P$, as this is not a consistent set of possible labels. For example, minimal elements are the only elements of $P$ that can possibly be labeled $1$ (or else something that element covers would not have any potential labels), and maximal elements are the only ones that can possibly be labeled $q$ (or else something covering that element would not have any potential labels).

In general, the range of possible values for a particular element is determined by the length of a maximum length chain containing that element.

\begin{definition}
\label{def:MaxChainLen}
Following the notation of Stanley~\cite{Stanley1972}, for $p\in P$, let $\delta(p)$ be the number of elements less than $p$ in a maximum length chain containing $p$, and let $\nu(p)$ be the number of elements greater than $p$ in a maximum length chain containing $p$.
\end{definition}

\begin{lemma}
If every chain in $P$ has length at most $q$, then the map $R$ taking $p$ to $[1+\delta(p),q-\nu(p)]$ is consistent.
\end{lemma}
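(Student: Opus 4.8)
The plan is to verify directly that the candidate map $R$ taking $p \mapsto [1+\delta(p), q-\nu(p)]$ satisfies the two inequalities in Definition~\ref{def:consistent}, namely that for each covering relation $x \lessdot y$ we have $\min(R(x)) < \min(R(y))$ and $\max(R(x)) < \max(R(y))$. Since $\min(R(p)) = 1+\delta(p)$ and $\max(R(p)) = q-\nu(p)$, this reduces to the two claims $\delta(x) < \delta(y)$ and $\nu(x) > \nu(y)$ for every covering relation $x \lessdot y$. These are statements purely about the chain-length statistics $\delta$ and $\nu$, independent of $q$.

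First I would establish $\delta(x) < \delta(y)$ when $x \lessdot y$. The key observation is that any maximum-length chain $C$ realizing $\delta(x)$ (a chain through $x$ whose portion below $x$ has $\delta(x)$ elements) can be extended by appending $y$ immediately above $x$, producing a chain through $y$ whose portion strictly below $y$ contains all of $C$'s elements at or below $x$, that is at least $\delta(x)+1$ elements. Hence $\delta(y) \geq \delta(x)+1 > \delta(x)$. Symmetrically, for $\nu(x) > \nu(y)$, I would take a maximum-length chain realizing $\nu(y)$ and prepend $x$ below $y$, showing the number of elements strictly above $x$ is at least $\nu(y)+1$, so $\nu(x) \geq \nu(y)+1 > \nu(y)$.

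A small subtlety worth addressing is the exact bookkeeping in Definition~\ref{def:MaxChainLen}: $\delta(p)$ counts elements \emph{less than} $p$ in a maximum-length chain containing $p$, and I must confirm that extending a chain witnessing $\delta(x)$ by adjoining $y$ does not accidentally fail to be a chain or fail to contain $y$ as required. Since $x \lessdot y$ guarantees $x < y$, the extended set remains totally ordered, and every element previously below $x$ remains below $y$ by transitivity, so the count genuinely increases by at least the one new element $x$ itself now sitting strictly below $y$. The analogous care applies to the $\nu$ argument.

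I expect the main (though still mild) obstacle to be purely one of careful definition-tracking rather than any deep idea: one must ensure the chains being manipulated are genuinely of the form required to witness $\delta$ and $\nu$, and that the inequalities are strict rather than merely weak. The hypothesis that every chain has length at most $q$ is what guarantees $R(p)$ is a nonempty (indeed valid) interval, since it forces $1+\delta(p) \leq q-\nu(p)$; I would note this ensures the output of $R$ is a legitimate nonempty finite subset of $\mathbb{Z}$ as required of a restriction function, completing the verification that $R$ is consistent.
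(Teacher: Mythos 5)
Your proof is correct and follows essentially the same route as the paper's: both reduce consistency to the strict inequalities $\delta(x)<\delta(y)$ and $\nu(x)>\nu(y)$ across covering relations and establish these by extending a witnessing chain by the covering element, with the chain-length hypothesis guaranteeing the intervals are nonempty. Your explicit care in truncating the chain at $x$ before appending $y$ (so the result is genuinely a chain) is a nice touch that the paper leaves implicit.
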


\begin{proof}
If $P$ has any chains of length greater than $q$, there is no way to label all these elements in a strictly increasing fashion with $q$ numbers, and $\inc{q}{P}$ would be empty. This is equivalent to there being some $p\in P$ with $\delta(p)+\nu(p)+1>q$, which implies the interval $[1+\delta(p),q-\nu(p)]$ is empty.

Otherwise, for an increasing labeling on $P$ with largest possible entry $q$, an element $p\in P$ can only be labeled by values on the interval $[1+\delta(p),q-\nu(p)]$. Additionally, if $p_1\lessdot p_2$, then $\delta(p_1)<\delta(p_2)$ (any maximum length chain of length $\ell$ with $p_1$ as its top element can be extended to a chain of length $\ell +1$ with $p_2$ as its top element), and $\nu(p_1)>\nu(p_2)$ (any maximum length chain of length $\ell$ with $p_2$ as its bottom element can be extended to a chain of length $\ell +1$ with $p_1$ as its bottom element). Thus, the map $R(p)=[1+\delta(p),q-\nu(p)]$ will be consistent.
\end{proof}

For the rest of this section, we will ignore the degenerate case, and assume all chains containing $p$ are of length less than $q$.

\begin{definition}
\label{def:GammaOneq}
Let $\Gamma(P,q)$ be the poset $\Gamma(P,R)$ for the restriction function given by $R(p)=[1+\delta(p),q-\nu(p)]$.
\end{definition}

\begin{example}
Consider $b$, the leftmost element of the poset $P$ in Figure \ref{fig:qexample}. The maximum chain containing $b$ contains itself and its cover, $e$. We see that $\delta(b)=0$, $\nu(b)=1$, and $q=5$, so the $b$ can be given any label in the interval $[1,4]$.
On the other hand, $e$ is contained in a maximum chain of length three. We obtain $\delta(e)=2$ and $\nu(e)=0$, making the restriction for $e$ the interval $[3,5]$. 
Applying this definition to every element in the poset results in a consistent labeling. The corresponding poset, $\Gamma(P,5)$, is shown in Figure \ref{fig:gamma1qexample}. See Figures~\ref{fig:inclabeling} and \ref{fig:orderideal} for an example of an increasing labeling in $\inc{5}{P}$ and its corresponding order ideal in $J(\Gamma(P,5))$.
\end{example}

We may obtain a simpler description of the covering relations in $\Gamma(P,q)$ than in the case of general ranges, because the range of each possible entry is an interval. (See Figure~\ref{fig:notranked})

\begin{theorem}
\label{thm:interval}
Let $R$ be a consistent restriction function for a poset $P$ such that $R(p)$ is always a non-empty interval. Then $\Gamma(P,R)$ is the poset with elements $\{(p,k) \ | \ p\in P \mbox{ and } k\in R(p)^* \}$ and covering relations given by $(p_1,k_1)\lessdot (p_2,k_2)$ if and only if either
\begin{enumerate}
\item
$p_1=p_2$ and $k_1=k_2+1$, or
\item
$p_1\lessdot_P p_2$ and $k_1+1=k_2$.
\end{enumerate}
\end{theorem}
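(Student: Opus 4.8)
The plan is to substitute the interval hypothesis directly into the two covering conditions of Definition~\ref{def:GammaOne} and check that each reduces to the stated condition, in both directions. The underlying element set $\{(p,k) : p \in P,\ k \in R(p)^*\}$ is identical in both descriptions, so only the covering relations require attention. Throughout I would write $R(p_1) = [a_1,b_1]$ and $R(p_2) = [a_2,b_2]$; since $R$ is consistent, whenever $p_1 \lessdot_P p_2$ we have $a_1 < a_2$ and $b_1 < b_2$, which is exactly what guarantees the relevant labels below and above exist in what follows.

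First I would dispose of Condition (1). Here $p_1 = p_2 = p$, and since $(p,k_2)$ is an element we have $k_2 \in R(p)^*$, so $k_2 < \max(R(p))$ and hence $k_2+1 \in R(p)$ because $R(p)$ is an interval. Thus $R(p)_{>k_2} = k_2+1$, and the clause $R(p)_{>k_2}=k_1$ of Definition~\ref{def:GammaOne} is equivalent to $k_1 = k_2+1$, giving Condition (1) of the theorem.

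The content lies in Condition (2), where $p_1 \lessdot_P p_2$. I would analyze the requirement $k_1 = R(p_1)_{<k_2} \neq \max(R(p_1))$. Because $k_2 \geq a_2 > a_1$, there is always a label of $R(p_1)$ below $k_2$, and since $R(p_1) = [a_1,b_1]$ we get $R(p_1)_{<k_2} = \min(k_2-1, b_1)$. If $k_2 \geq b_1+2$ this equals $b_1 = \max(R(p_1))$, which is excluded; hence $k_2 \leq b_1+1$, so $R(p_1)_{<k_2} = k_2-1$ and $k_1 = k_2-1$. The exclusion $k_1 \neq b_1$ then forces $k_2 \leq b_1$, so in particular $k_1 = k_2-1 \in R(p_1)^*$. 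Thus the first two clauses of Definition~\ref{def:GammaOne}'s Condition (2) are equivalent to $k_1+1 = k_2$ together with the membership facts confirming $(p_1,k_1)$ is an element.

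Finally I would verify that the third clause --- that no greater $k \in R(p_2)$ satisfies $k_1 = R(p_1)_{<k}$ --- is automatic once $k_1+1 = k_2$ and $k_1 < b_1$. For any $k \in R(p_2)$ with $k > k_2$ we have $k \geq k_2+1 = k_1+2$, so $R(p_1)_{<k} = \min(k-1,b_1) \geq \min(k_1+1,b_1) = k_1+1 > k_1$, using $k_1+1 \leq b_1$. Hence the third clause holds for free, and Condition (2) of Definition~\ref{def:GammaOne} is equivalent to $p_1 \lessdot_P p_2$ with $k_1+1 = k_2$, which is Condition (2) of the theorem. I expect the only real obstacle to be bookkeeping: keeping straight the boundary case $k_1 = \max(R(p_1))$ and confirming that the ``no greater $k$'' clause is discharged automatically, both of which hinge on the interval structure and on consistency of $R$.
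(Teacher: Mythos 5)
Your proposal is correct and follows essentially the same route as the paper's proof: both arguments substitute the interval hypothesis into the two covering conditions of Definition~\ref{def:GammaOne}, observe that Condition (1) simplifies immediately, show that the clause $k_1=R(p_1)_{<k_2}\neq\max(R(p_1))$ forces $k_1=k_2-1$ because an interval has no gaps, and then check that the ``no greater $k$'' clause is automatic since $k_1+1=k_2$ already lies in $R(p_1)$. Your explicit formula $R(p_1)_{<k_2}=\min(k_2-1,b_1)$ just makes the same bookkeeping slightly more systematic.
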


\begin{proof}
We need to see how the covering relations in Definition~\ref{def:GammaOne} simplify when we know that our restriction function consists of intervals.

The first type of covering relation in $\Gamma(P,R)$ corresponds to the chain of elements for a fixed $p\in P$. Since the range of possible values for each $p$ is an interval, it is clear that the first type of covering relation in Definition~\ref{def:GammaOne} simplifies as above.

For the second type of covering relation, in Definition~\ref{def:GammaOne}, we need $k_1$ to be largest label of $R(p_1)$ less than $k_2$ ($k_1\neq \max(R(p_1)))$, and there can be no greater $k\in R(p_2)$ having $k_1$ as the largest label of $R(p_1)$ less than $k$.

First, we claim that if such a covering relation exists, then we necessarily have $k_1=k_2-1$. For assume we have a covering relation $(p_1,k_1)\lessdot (p_2,k_2)$ in $\Gamma(P,q)$ with $k_1<k_2-1$. By definition, $k_1$ is the largest label of $R(p_1)$ less than $k_2$, and $k_1$ is not $\max(R(p_1))$. But since $R(p_1)$ is an interval, and $k_1$ is not the maximum element, then $k_1+1$ would be a larger element of $R(p_1)$ less than $k_2$, which is a contradiction.

The second thing we need to show is that if we have $(p_1,k_2-1)$ and $(p_2,k_2)$ with $p_1\lessdot p_2$, $k_2\in R(p_1)^*$, and $k_2-1\in R(p_2)^*$, then these two elements form a covering relation in $\Gamma(P,R)$. This could only fail to be a covering relation if there was a greater $k\in R(p_2)$ having $k_2-1$ as the largest label of $R(p_1)$ less than $k$. However, since $k_2-1$ is in $R(p_1)^*$, it is not the largest element, so $k_2\in R(p_1)$. Thus, any $k\in R(p_2)$ greater that $k_2$ will have the largest element of $R(p_1)$ less than it be at least $k_2$.
\end{proof}

\begin{corollary}
\label{cor:GammaOneQ}
$\Gamma(P,q)$ is the poset with elements $\{(p,k) \ | \ p\in P \mbox{ and } k\in [1+\delta(p),q-\nu(p)-1]\}$, and covering relations given by $(p_1,k_1)\lessdot (p_2,k_2)$ if and only if either

\begin{enumerate}
\item
$p_1=p_2$ and $k_1=k_2+1$, or
\item
$p_1\lessdot_P p_2$ and $k_1+1=k_2$.
\end{enumerate}
\end{corollary}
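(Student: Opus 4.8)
The plan is to observe that Corollary~\ref{cor:GammaOneQ} is an immediate specialization of Theorem~\ref{thm:interval}. By Definition~\ref{def:GammaOneq}, the poset $\Gamma(P,q)$ is precisely $\Gamma(P,R)$ for the restriction function $R(p)=[1+\delta(p),q-\nu(p)]$. To invoke Theorem~\ref{thm:interval} I must verify its two hypotheses for this particular $R$: first, that $R$ is consistent, and second, that each $R(p)$ is a nonempty interval.

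The first hypothesis is exactly the content of the lemma established earlier in this subsection, which shows that $R(p)=[1+\delta(p),q-\nu(p)]$ is consistent whenever every chain of $P$ has length at most $q$; since we have agreed to work in the nondegenerate case throughout (all chains containing $p$ have length less than $q$), $R$ is consistent. The second hypothesis is immediate, as $R(p)$ is by construction the integer interval $[1+\delta(p),q-\nu(p)]$, which is nonempty precisely because $\delta(p)+\nu(p)+1\le q$ in the nondegenerate case.

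With both hypotheses in hand, Theorem~\ref{thm:interval} applies and yields the stated covering relations, conditions~(1) and~(2), verbatim. The only remaining bookkeeping is to identify the element set: Theorem~\ref{thm:interval} indexes elements by pairs $(p,k)$ with $k\in R(p)^*$, and since $R(p)^*$ is $R(p)$ with its largest element $q-\nu(p)$ removed, we get $R(p)^*=[1+\delta(p),q-\nu(p)-1]$, which matches the index set asserted in the corollary. I do not anticipate any genuine obstacle; the entire content is the routine translation of the general interval result into the specific intervals determined by $\delta$ and $\nu$, so the corollary follows at once.
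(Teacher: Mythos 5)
Your proposal is correct and matches the paper's intended argument exactly: the corollary is stated without a separate proof precisely because it is the specialization of Theorem~\ref{thm:interval} to the restriction function $R(p)=[1+\delta(p),q-\nu(p)]$ of Definition~\ref{def:GammaOneq}, whose consistency is the preceding lemma and whose interval form is immediate. Your bookkeeping identifying $R(p)^*=[1+\delta(p),q-\nu(p)-1]$ is the only remaining step, and you carry it out correctly.
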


\begin{figure}[htbp]
\begin{tikzpicture}
\coordinate (a) at (0,0);
\coordinate (d) at (1,1);
\coordinate (c) at (-1,1);
\coordinate (e) at (-2,2);
\coordinate (b) at (-3,1);
\draw[] (d) -- (a) -- (c) -- (e) -- (b) ;
\node[fill=white,draw,circle,inner sep=.5ex] at (a) {a};
\node[fill=white,draw,circle,inner sep=.5ex] at (d) {d};
\node[fill=white,draw,circle,inner sep=.5ex] at (c) {c};
\node[fill=white,draw,circle,inner sep=.5ex] at (e) {e};
\node[fill=white,draw,circle,inner sep=.5ex] at (b) {b};
\node[below=.25cm] at (a) {$\{1,2,3\}$};
\node[above=.25cm] at (d) {$\{2,3,4,5\}$};
\node[above=.25cm] at (c) {\hspace{.4cm}$\{2,3,4\}$};
\node[above=.25cm] at (e) {$\{3,4,5\}$};
\node[below=.25cm] at (b) {$\{1,2,3,4\}$};
\end{tikzpicture}
\caption{A poset $P$ with the ranges of possible values for each entry in an increasing labeling in $\inc{5}{P}$.}
\label{fig:qexample}
\end{figure}
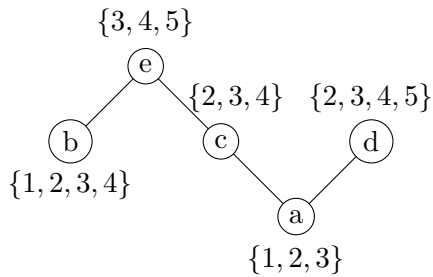

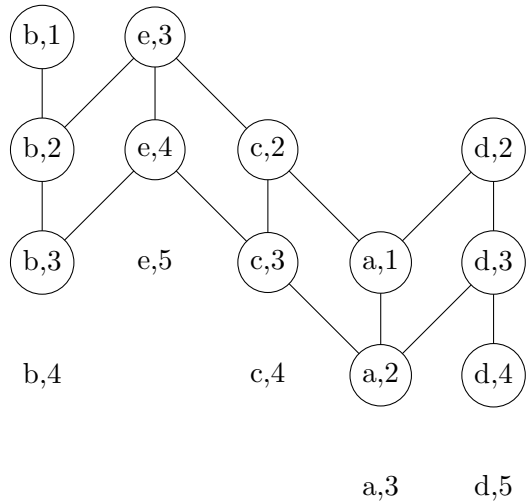
\begin{figure}[htbp]
\begin{tikzpicture}[scale=1.5]
\coordinate (a0) at (0,-1);
\coordinate (a) at (0,0);
\coordinate (a1) at (0,1);
\coordinate (d-1) at (1,-1);
\coordinate (d0) at (1,0);
\coordinate (d) at (1,1);
\coordinate (d1) at (1,2);
\coordinate (c0) at (-1,0);
\coordinate (c) at (-1,1);
\coordinate (c1) at (-1,2);
\coordinate (e0) at (-2,1);
\coordinate (e) at (-2,2);
\coordinate (e1) at (-2,3);
\coordinate (b0) at (-3,0);
\coordinate (b) at (-3,1);
\coordinate (b1) at (-3,2);
\coordinate (b2) at (-3,3);
\draw[] (d) -- (a) -- (c) -- (e) -- (b) ;
\draw[] (d1) -- (a1) -- (c1) -- (e1) -- (b1) ;
\draw (a1) -- (a);
\draw (d1) -- (d);
\draw (c1) -- (c);
\draw (e1) -- (e);
\draw (b1) -- (b);
\draw (d0) -- (d);
\draw (b2) -- (b1);
\node[inner sep=.5ex] at (a0) {a,3};
\node[fill=white,draw,circle,inner sep=.5ex] at (a) {a,2};
\node[fill=white,draw,circle,inner sep=.5ex] at (a1) {a,1};
\node[inner sep=.5ex] at (d-1) {d,5};
\node[fill=white,draw,circle,inner sep=.5ex] at (d0) {d,4};
\node[fill=white,draw,circle,inner sep=.5ex] at (d) {d,3};
\node[fill=white,draw,circle,inner sep=.5ex] at (d1) {d,2};
\node[inner sep=.5ex] at (c0) {c,4};
\node[fill=white,draw,circle,inner sep=.5ex] at (c) {c,3};
\node[fill=white,draw,circle,inner sep=.5ex] at (c1) {c,2};
\node[inner sep=.5ex] at (e0) {e,5};
\node[fill=white,draw,circle,inner sep=.5ex] at (e) {e,4};
\node[fill=white,draw,circle,inner sep=.5ex] at (e1) {e,3};
\node[inner sep=.5ex] at (b0) {b,4};
\node[fill=white,draw,circle,inner sep=.5ex] at (b) {b,3};
\node[fill=white,draw,circle,inner sep=.5ex] at (b1) {b,2};
\node[fill=white,draw,circle,inner sep=.5ex] at (b2) {b,1};
\end{tikzpicture}
\caption{$\Gamma(P,5)$, the poset whose order ideals are in bijection with $\inc{5}{P}$.}
\label{fig:gamma1qexample}
\end{figure}

\begin{figure}[htbp]
\begin{tikzpicture}
\coordinate (a) at (0,0);
\coordinate (d) at (1,1);
\coordinate (c) at (-1,1);
\coordinate (e) at (-2,2);
\coordinate (b) at (-3,1);
\draw[] (d) -- (a) -- (c) -- (e) -- (b) ;
\node[fill=white,draw,circle,inner sep=.5ex] at (a) {a};
\node[fill=white,draw,circle,inner sep=.5ex] at (d) {d};
\node[fill=white,draw,circle,inner sep=.5ex] at (c) {c};
\node[fill=white,draw,circle,inner sep=.5ex] at (e) {e};
\node[fill=white,draw,circle,inner sep=.5ex] at (b) {b};
\node[below=.25cm] at (a) {1};
\node[above=.25cm] at (d) {3};
\node[right=.25cm] at (c) {3};
\node[above=.25cm] at (e) {5};
\node[below=.25cm] at (b) {2};
\end{tikzpicture}
\caption{An increasing labeling on $P$ with labels at most 5.}
\label{fig:inclabeling}
\end{figure}
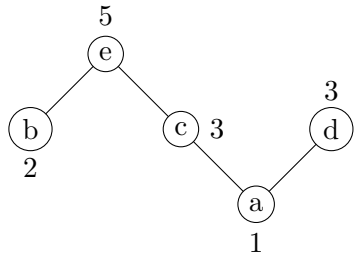

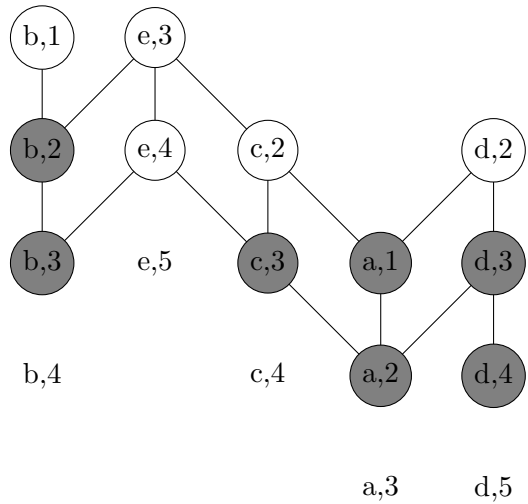
\begin{figure}[htbp]
\begin{tikzpicture}[scale=1.5]
\coordinate (a0) at (0,-1);
\coordinate (a) at (0,0);
\coordinate (a1) at (0,1);
\coordinate (d-1) at (1,-1);
\coordinate (d0) at (1,0);
\coordinate (d) at (1,1);
\coordinate (d1) at (1,2);
\coordinate (c0) at (-1,0);
\coordinate (c) at (-1,1);
\coordinate (c1) at (-1,2);
\coordinate (e0) at (-2,1);
\coordinate (e) at (-2,2);
\coordinate (e1) at (-2,3);
\coordinate (b0) at (-3,0);
\coordinate (b) at (-3,1);
\coordinate (b1) at (-3,2);
\coordinate (b2) at (-3,3);
\draw[] (d) -- (a) -- (c) -- (e) -- (b) ;
\draw[] (d1) -- (a1) -- (c1) -- (e1) -- (b1) ;
\draw (a1) -- (a);
\draw (d1) -- (d);
\draw (c1) -- (c);
\draw (e1) -- (e);
\draw (b1) -- (b);
\draw (d0) -- (d);
\draw (b2) -- (b1);
\node[inner sep=.5ex] at (a0) {a,3};
\node[fill=gray,draw,circle,inner sep=.5ex] at (a) {a,2};
\node[fill=gray,draw,circle,inner sep=.5ex] at (a1) {a,1};
\node[inner sep=.5ex] at (d-1) {d,5};
\node[fill=gray,draw,circle,inner sep=.5ex] at (d0) {d,4};
\node[fill=gray,draw,circle,inner sep=.5ex] at (d) {d,3};
\node[fill=white,draw,circle,inner sep=.5ex] at (d1) {d,2};
\node[inner sep=.5ex] at (c0) {c,4};
\node[fill=gray,draw,circle,inner sep=.5ex] at (c) {c,3};
\node[fill=white,draw,circle,inner sep=.5ex] at (c1) {c,2};
\node[inner sep=.5ex] at (e0) {e,5};
\node[fill=white,draw,circle,inner sep=.5ex] at (e) {e,4};
\node[fill=white,draw,circle,inner sep=.5ex] at (e1) {e,3};
\node[inner sep=.5ex] at (b0) {b,4};
\node[fill=gray,draw,circle,inner sep=.5ex] at (b) {b,3};
\node[fill=gray,draw,circle,inner sep=.5ex] at (b1) {b,2};
\node[fill=white,draw,circle,inner sep=.5ex] at (b2) {b,1};
\end{tikzpicture}
\caption{The  order ideal in $\Gamma(P,5)$ corresponding to the increasing labeling in Figure~\ref{fig:inclabeling}.}
\label{fig:orderideal}
\end{figure}

\begin{figure}[htbp]
\label{fig:unranked}
\begin{tikzpicture}
\begin{scope}[xshift=0cm]
\coordinate (a) at (0,0);
\coordinate (b) at (-1,2);
\coordinate (c) at (1,1);
\coordinate (d) at (1,3);
\coordinate (e) at (0,4);
\draw[] (a) -- (b) -- (e) -- (d) -- (c) -- cycle ;
\node[fill=white,draw,circle,inner sep=.5ex] at (a) {a};
\node[fill=white,draw,circle,inner sep=.5ex] at (b) {b};
\node[fill=white,draw,circle,inner sep=.5ex] at (c) {c};
\node[fill=white,draw,circle,inner sep=.5ex] at (d) {d};
\node[fill=white,draw,circle,inner sep=.5ex] at (e) {e};
\end{scope}

\begin{scope}[xshift=8cm,scale=1.4]
\coordinate (a1) at (0,0);
\coordinate (a2) at (0,-1);
\coordinate (a3) at (0,-1.75);
\coordinate (b1) at (-2,3);
\coordinate (b2) at (-2,2);
\coordinate (b3) at (-2,1);
\coordinate (b4) at (-2,0);
\coordinate (b5) at (-2,-.75);
\coordinate (c2) at (2,2);
\coordinate (c3) at (2,1);
\coordinate (c4) at (2,0.25);
\coordinate (d3) at (1,4);
\coordinate (d4) at (1,3);
\coordinate (d5) at (1,2.25);
\coordinate (e4) at (0,5);
\coordinate (e5) at (0,4);
\coordinate (e6) at (0,3);
\draw[] (a2) -- (c3);
\draw[] (a1) -- (c2);
\draw[] (a2) -- (b3);
\draw[] (a1) -- (b2);
\draw[] (b4) -- (e5);
\draw[] (b3) -- (e4);
\draw[] (c3) -- (d4);
\draw[] (c2) -- (d3);
\draw[] (d4) -- (e5);
\draw[] (d3) -- (e4);
\draw[] (a1) -- (a2);
\draw[] (b2) -- (b3) -- (b4);
\draw[] (c2) -- (c3);
\draw[] (d3) -- (d4);
\draw[] (e4) -- (e5);
\node[fill=white,draw,circle,inner sep=.5ex] at (a1) {a,1};
\node[fill=white,draw,circle,inner sep=.5ex] at (a2) {a,2};
\node[] at (a3) {a,3};
\node[fill=white,draw,circle,inner sep=.5ex] at (b2) {b,2};
\node[fill=white,draw,circle,inner sep=.5ex] at (b3) {b,3};
\node[fill=white,draw,circle,inner sep=.5ex] at (b4) {b,4};
\node[] at (b5) {b,5};
\node[fill=white,draw,circle,inner sep=.5ex] at (c2) {c,2};
\node[fill=white,draw,circle,inner sep=.5ex] at (c3) {c,3};
\node[] at (c4) {c,4};
\node[fill=white,draw,circle,inner sep=.5ex] at (d3) {d,3};
\node[fill=white,draw,circle,inner sep=.5ex] at (d4) {d,4};
\node[] at (d5) {d,5};
\node[fill=white,draw,circle,inner sep=.5ex] at (e4) {e,4};
\node[fill=white,draw,circle,inner sep=.5ex] at (e5) {e,5};
\node[] at (e6) {e,6};
\end{scope}
\end{tikzpicture}
\caption{A poset $P$ that is not ranked and $\Gamma(P,6)$.}
\label{fig:notranked}
\end{figure}
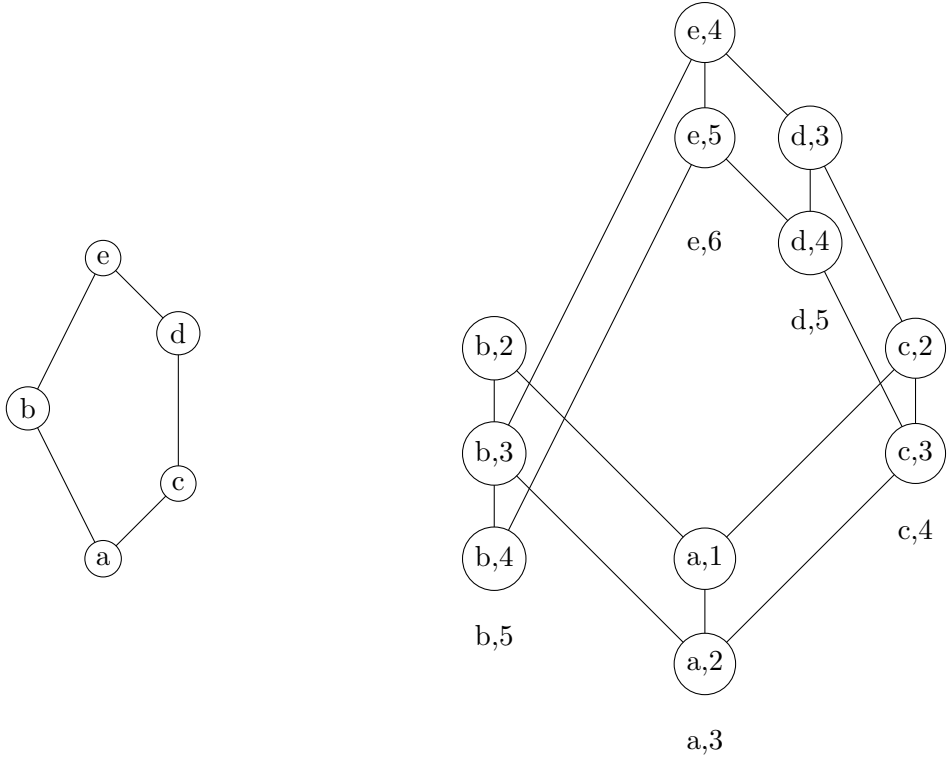

\subsection{Weakly increasing labelings}
\label{subsec:ranked}
With minor modification, everything that has been said so far about strictly increasing labelings also applies to weakly increasing labelings.

One can say that a restriction function $R:P\mapsto \mathcal{P}(\mathbb{Z})$ is \emph{weakly-consistent} if for every covering relation $x\lessdot y$ in $P$, we have $\min(R(x)) \leq \min(R(y))$ and $\max(R(x))\leq \max(R(y))$; this is Definition~\ref{def:consistent} with weak inequalities instead of strict. Then the same argument as in the proof of Theorem~\ref{thm:inclabeliffconsistent} shows that every potential label is used in some weakly increasing labeling if and only if the restriction function is weakly consistent. We denote weakly increasing labelings on $P$ with entries restricted by $R$ as $\winc{R}{P}$. 

The set of weakly increasing labelings is again partially ordered by component-wise comparison, with join and meet given by taking maxima and minima componentwise (respectively), and meet irreducibles indexed by $(p,k)$ where $k\neq \max(R(p))$.

\begin{definition}
Given a consistent restriction function $R$, let $R(p)_{\geq k}$ be the smallest label of $R(p)$ that is larger than or equal to $k$, and let $R(p)_{\leq k}$ be the largest label of $R(p)$ less than or equal to k.
\end{definition}

\begin{definition}
\label{def:GammaTwo}
Let $P$ be a poset and $R$ a weakly-consistent map of possible labels. Then define $\Gamma'(P,R)$ to be the poset whose elements are $(p,k)$ with $p\in P$ and $k\in R(p)^*$, and covering relations given by $(p_1,k_1)\lessdot (p_2,k_2)$ if and only if either

\begin{enumerate}
\item  $p_1=p_2$ and $R(p)_{>k_2}=k_1$ , or

\item $p_1\lessdot p_2$ (in $P$), $R(p_1)_{\leq k_2}=k_1$, and no greater $k\in R(p_2)$ has $R(p_1)_{\leq k}=k_1$
\end{enumerate}
\end{definition}

\begin{theorem}
The poset $\Gamma'(P,R)$ is isomorphic to the dual of the lattice of meet irreducibles of $\winc{R}{P}$. Therefore, order ideals of $\Gamma'(P,R)$ are in bijection with $\winc{R}{P}$.
\end{theorem}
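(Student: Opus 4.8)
The plan is to follow the proof of Theorem~\ref{thm:Gamma1MeetIrred} essentially line by line, systematically replacing every strict inequality and every instance of $R(p)_{<k}$ by the corresponding weak inequality and $R(p)_{\leq k}$, and checking that each step survives the relaxation. Since $\winc{R}{P}$ is a finite distributive lattice under the componentwise order (with meet and join computed componentwise, as already noted), Birkhoff's theorem applies, so it suffices to identify $\Gamma'(P,R)$ with the dual of the poset of meet irreducibles of $\winc{R}{P}$; the asserted bijection with $J(\Gamma'(P,R))$ then follows exactly as in the strict case.

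First I would re-establish, in the weak setting, the three lemmas preceding Proposition~\ref{thm:meetirred}. The statements "two raisable elements implies not meet irreducible" and "there is a unique meet irreducible with no raisable element, namely $f(p)=\max(R(p))$" carry over verbatim, since neither argument uses strictness. The third lemma---for each $p\in P$ and $k\in R(p)^*$ there is a unique meet irreducible $f$ with $f(p)=k$ and $p$ its only raisable element---requires the one genuine modification: the recursive construction of $f$ on the principal order ideal below $p$ now sets $f(p_1)$ to be the largest label of $R(p_1)$ that is at most $f(p_2)$ for every cover $p_1\lessdot p_2$ lying above it (i.e. $R(p_1)_{\leq \cdot}$ rather than $R(p_1)_{< \cdot}$), where weak-consistency (the weak-inequality version of Definition~\ref{def:consistent}) guarantees such a label exists. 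This reproduces the indexing of meet irreducibles by pairs $(p,k)$ with $k\in R(p)^*$ that the section already asserts.

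Next I would carry out the case analysis showing that the induced dual order on these pairs is exactly the covering relation of Definition~\ref{def:GammaTwo}. If $p_1,p_2$ are incomparable, the associated meet irreducibles are incomparable by the same valuation argument as before; if they are comparable but $p_1\lessdot p_2$ fails, the relation is transitively implied by an intermediate meet irreducible indexed by some $(p',f(p'))$; and the case $p_1=p_2$ produces the chain of Condition~(1), which is unchanged from the strict setting.

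The main obstacle is the case $p_1\lessdot p_2$, which is where the weak setting genuinely departs from the strict one. The meet irreducible indexed by $(p_2,k_2)$ now assigns $p_1$ the value $R(p_1)_{\leq k_2}$, which equals $k_2$ itself whenever $k_2\in R(p_1)$, because weakly increasing labelings permit $f(p_1)=f(p_2)$ across a cover. Consequently covering relations of the form $(p_1,k)\lessdot(p_2,k)$ with equal second coordinate can occur, a phenomenon entirely absent from $\Gamma(P,R)$. The careful points are therefore (i) confirming that the order relation between the two meet irreducibles is governed by comparing $R(p_1)_{\leq k_2}$ with $k_1$, rather than by comparing $k_1$ with $k_2$ directly as in the strict case, and (ii) verifying that the second clause of Condition~(2)---that no greater $k\in R(p_2)$ has $R(p_1)_{\leq k}=k_1$---removes precisely the transitively implied relations, with particular attention to the boundary distinction between $k_2\in R(p_1)$ and $k_2\notin R(p_1)$. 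Once these points are checked, the argument closes exactly as in the proof of Theorem~\ref{thm:Gamma1MeetIrred}.
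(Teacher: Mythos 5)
Your proposal is correct and is essentially the proof the paper intends: the paper states this theorem without proof, leaving the reader to adapt the argument for Theorem~\ref{thm:Gamma1MeetIrred} by replacing strict inequalities and $R(p)_{<k}$ with their weak counterparts, which is precisely what you do. Your one substantive addition---flagging that $R(p_1)_{\leq k_2}$ equals $k_2$ whenever $k_2\in R(p_1)$, so that covering relations $(p_1,k)\lessdot(p_2,k)$ with equal second coordinate now occur---correctly isolates the only place where the adaptation is more than mechanical.
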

There is a simple connection between strictly increasing labelings and weakly increasing labelings in the case when $P$ is ranked. We define a poset to be \emph{ranked} if there exists a rank function $\rk:P\rightarrow \mathbb{Z}$ such that $p_1\lessdot_P p_2$ implies $\rk(p_2)=\rk(p_1)+1$.

\begin{theorem}
If $P$ is a ranked poset with rank function $\rk$, then $R_1:P\mapsto \mathcal{P}(\mathbb{Z})$ is a consistent restriction function if and only if $R_2:P\mapsto \mathcal{P}(\mathbb{Z})$ given by $R_1(p)=R_2(p)+\rk(p)$ is a weakly-consistent labeling. Also, $\Gamma(P,R_1)\equiv\Gamma'(P,R_2)$.
\end{theorem}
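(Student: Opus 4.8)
The plan is to prove the two assertions separately, both hinging on the single arithmetic fact that, since $\rk$ increases by exactly $1$ across each covering relation, a strict inequality between rank-shifted quantities is equivalent to a weak inequality between the unshifted ones. Throughout, I read $R_2(p)+\rk(p)$ as the set $\{m+\rk(p) : m\in R_2(p)\}$, so that $\min(R_1(p))=\min(R_2(p))+\rk(p)$ and $\max(R_1(p))=\max(R_2(p))+\rk(p)$, and likewise the indexing operators translate by $\rk(p)$.

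For the consistency equivalence, I would fix a covering relation $x\lessdot y$, so that $\rk(y)=\rk(x)+1$. The condition $\min(R_1(x))<\min(R_1(y))$ rewrites as $\min(R_2(x))+\rk(x)<\min(R_2(y))+\rk(x)+1$, i.e. $\min(R_2(x))<\min(R_2(y))+1$, which for integers is exactly $\min(R_2(x))\leq\min(R_2(y))$. The identical computation handles the $\max$ conditions. Hence $R_1$ satisfies Definition~\ref{def:consistent} at $x\lessdot y$ if and only if $R_2$ satisfies its weak analogue there, and since this holds at every covering relation, $R_1$ is consistent iff $R_2$ is weakly-consistent.

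For the isomorphism, I would exhibit the explicit map $\phi(p,k)=(p,\,k-\rk(p))$ from $\Gamma(P,R_1)$ to $\Gamma'(P,R_2)$ and show it carries covering relations to covering relations in both directions. Since subtracting the constant $\rk(p)$ is order-preserving on $R_1(p)$, it sends $R_1(p)^*$ bijectively onto $R_2(p)^*$, so $\phi$ is a well-defined bijection of underlying sets. Condition (1) (the vertical chains, $p_1=p_2=p$) transports immediately: $R_1(p)_{>k_2}=k_1$ becomes $R_2(p)_{>k_2-\rk(p)}=k_1-\rk(p)$ after subtracting $\rk(p)$, matching Condition (1) of Definition~\ref{def:GammaTwo}. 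Condition (2) (with $p_1\lessdot p_2$, hence $\rk(p_2)=\rk(p_1)+1$) is where the strict/weak discrepancy between Definitions~\ref{def:GammaOne} and~\ref{def:GammaTwo} is exactly accounted for: writing $k_i'=k_i-\rk(p_i)$, the defining clause $k_1=R_1(p_1)_{<k_2}$ says $k_1'$ is the largest element of $R_2(p_1)$ strictly below $k_2'+1$, which for integers is precisely $R_2(p_1)_{\leq k_2'}=k_1'$. The same substitution $m<k'+1\iff m\leq k'$ converts the final ``no greater $k$'' clause of Definition~\ref{def:GammaOne} into the corresponding clause of Definition~\ref{def:GammaTwo}.

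I expect the main obstacle to be the bookkeeping in Condition (2): one must carefully track that $k_2$ sits at rank $\rk(p_2)=\rk(p_1)+1$ while $k_1$ sits at rank $\rk(p_1)$, so that the ``$+1$'' contributed by the rank jump is exactly what turns the strict indexing $R_1(\cdot)_{<\,\cdot}$ into the weak indexing $R_2(\cdot)_{\leq\,\cdot}$. Once this single translation is verified for the primary clause, the auxiliary clause follows by the identical integer substitution, and since the same reasoning applied to $\phi^{-1}(p,k')=(p,\,k'+\rk(p))$ shows covering relations are reflected as well, $\phi$ is the desired poset isomorphism.
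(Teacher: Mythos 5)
Your proposal is correct and is exactly the argument the paper intends: the paper's entire proof is the one-line remark that one "uses the rank function to convert strict inequalities to weak inequalities," and your write-up simply carries out that translation in full detail, both for the consistency equivalence and for the covering relations of $\Gamma(P,R_1)$ versus $\Gamma'(P,R_2)$ via the shift $(p,k)\mapsto(p,k-\rk(p))$. No gaps; your version is just more explicit than the paper's.
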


The proof of this theorem consists of using the rank function to convert strict inequalities to weak inequalities.

In the case of strictly increasing labelings with global bound $q$ on a ranked poset, the description when we convert to weakly increasing labelings becomes even nicer. In particular, the covering relations on the ordered pairs $(p,k)$ appearing in $\Gamma'(P,q)$ are exactly the same as the induced subposet of the Cartesian product $P\times\mathbb{Z}^{rev}$. There is a natural bijection between strictly increasing labelings with entries in $[q]$ and strictly decreasing labelings with entries in $q$ (replace each label $i$ with $q+1-i$) which we exploit in order to get the following Cartesian product representation with the usual ordering on $\mathbb{Z}$.

\begin{theorem}
$\Gamma(P,q)$ is isomorphic to the induced subposet of $P\times \mathbb{Z}$ on the elements $\{(p,k)\in P\times[\mathbf{q}] \ | \ k\in [\nu(p)+\rk(p)+1,q-\delta(p)+\rk(p)-1] \}$
via the map $(p,k)\mapsto (p,q+1-k-\rk(p))$.
\end{theorem}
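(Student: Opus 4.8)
The plan is to prove this by a direct comparison of Hasse diagrams, using the explicit description of $\Gamma(P,q)$ from Corollary~\ref{cor:GammaOneQ}. In the ambient poset $P\times\mathbb{Z}$ (usual product order) the covers come in exactly two kinds: $(p,m)\lessdot(p,m+1)$, and $(p_1,m)\lessdot(p_2,m)$ whenever $p_1\lessdot_P p_2$. By Corollary~\ref{cor:GammaOneQ}, $\Gamma(P,q)$ also has exactly two kinds of covers: $(p,k+1)\lessdot(p,k)$, and $(p_1,k_1)\lessdot(p_2,k_1+1)$ when $p_1\lessdot_P p_2$. So I would look for a map $(p,k)\mapsto(p,m(p,k))$ carrying the first kind of $\Gamma$-cover to a vertical $P\times\mathbb{Z}$-cover and the second to a horizontal one. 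The first requirement forces $m$ to be affine and strictly decreasing in $k$ at fixed $p$, say $m(p,k)=-k+c(p)$; the second forces $m(p_1,k_1)=m(p_2,k_1+1)$, i.e. $c(p_2)=c(p_1)+1$ whenever $p_1\lessdot_P p_2$, so $c(p)=\rk(p)+\text{const}$. Pinning the constant so the image is exactly the stated window gives $m(p,k)=q-k+\rk(p)$, the map of the theorem. It is automatically injective, since it is affine in $k$ and preserves the first coordinate.

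Next I would verify this is a bijection onto the stated set $S$ and that it matches covers in both directions. As $k$ ranges over $[1+\delta(p),\,q-\nu(p)-1]$ and $m=q-k+\rk(p)$ is decreasing in $k$, its image is exactly $[\nu(p)+\rk(p)+1,\,q-\delta(p)+\rk(p)-1]$, giving surjectivity onto $S$. For covers: a type-(1) cover $(p,k+1)\lessdot(p,k)$ maps to two pairs with the same first coordinate and second coordinates differing by $1$, a vertical cover; a type-(2) cover $(p_1,k_1)\lessdot(p_2,k_1+1)$ maps, using $\rk(p_2)=\rk(p_1)+1$, to two pairs with equal second coordinate $q-k_1+\rk(p_1)$ and $p_1\lessdot_P p_2$, a horizontal cover. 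The two $\pm 1$ contributions cancel precisely because $P$ is ranked; this is exactly where rankedness enters.

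The step I expect to be the real obstacle is that $S$ is only an \emph{induced} subposet, so a priori it could acquire covering relations that are not restrictions of covers of $P\times\mathbb{Z}$ (a saturated chain of $P\times\mathbb{Z}$ between two points of $S$ could leave $S$), which would break the Hasse-diagram matching. To rule this out I would show $S$ is \emph{convex} in $P\times\mathbb{Z}$: if $(p_1,m_1),(p_2,m_2)\in S$ and $(p_1,m_1)\le(p,m)\le(p_2,m_2)$, then $(p,m)\in S$. This reduces to monotonicity of the two boundary functions of the window along covers of $P$. Across a cover $p'\lessdot_P p$ one has $\rk(p)=\rk(p')+1$ while $\nu(p')\ge\nu(p)+1$ and $\delta(p)\ge\delta(p')+1$ (extend a maximal chain through one endpoint past the other), so both $p\mapsto \nu(p)+\rk(p)+1$ and $p\mapsto q-\delta(p)+\rk(p)-1$ are order-reversing. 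Hence for $p_1\le p\le p_2$ the lower boundary at $p$ is at most that at $p_1$, so $m\ge m_1\ge \nu(p)+\rk(p)+1$, and the upper boundary at $p$ is at least that at $p_2$, so $m\le m_2\le q-\delta(p)+\rk(p)-1$; thus $(p,m)\in S$. Convexity forces every cover of $S$ to be a cover of $P\times\mathbb{Z}$ restricted to $S$, so the map and its inverse both send covers to covers, making it a Hasse-diagram bijection and therefore a poset isomorphism.
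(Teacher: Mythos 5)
Your argument is correct and complete, and it is considerably more explicit than what the paper provides: the paper states this theorem without proof, deriving it implicitly by composing the strict-to-weak conversion $R_1(p)=R_2(p)+\rk(p)$ (which identifies $\Gamma(P,R_1)$ with $\Gamma'(P,R_2)$) with the label reversal $i\mapsto q+1-i$ that turns the induced subposet of $P\times\mathbb{Z}^{rev}$ into one of $P\times\mathbb{Z}$. Your direct cover-by-cover verification from Corollary~\ref{cor:GammaOneQ} is a legitimate alternative route, and your convexity argument addresses a genuine subtlety --- that an induced subposet could acquire covering relations which are not covers of the ambient $P\times\mathbb{Z}$ --- which the paper passes over in silence; the monotonicity of the two boundary functions $p\mapsto\nu(p)+\rk(p)+1$ and $p\mapsto q-\delta(p)+\rk(p)-1$ along covers of $P$ is exactly the right tool there.

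One discrepancy you should not gloss over: the map you derive is $(p,k)\mapsto(p,\,q-k+\rk(p))$, whereas the theorem as printed says $(p,k)\mapsto(p,\,q+1-k-\rk(p))$, and these are not the same, so you should not call yours ``the map of the theorem.'' In fact yours is the one that works: under the printed map a type-(2) cover $(p_1,k_1)\lessdot(p_2,k_1+1)$ with $p_1\lessdot_P p_2$ is sent to a pair whose second coordinates differ by $2$ (since $\rk(p_2)=\rk(p_1)+1$), which is not a cover of $P\times\mathbb{Z}$, and the endpoints of $[1+\delta(p),\,q-\nu(p)-1]$ do not land on the endpoints of the stated window, while $q-k+\rk(p)$ passes both checks. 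So you have proved a corrected version of the statement (the printed formula appears to be a typo); it would strengthen your write-up to say this explicitly rather than to assert agreement with the printed map.
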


An additional special case of interest is when $P$ is ranked and the maximum length of a chain containing an element is the same for every element in $P$ (which Stanley calls the \emph{$\lambda$ chain condition}). In this case, $\delta(p)+1+\nu(p)$ is the same for every element $p$ (call this common number $\rk(P)$). One can check that  $\delta(p)$ is a rank function for this poset, and then $[\nu(p)+\rk(p)+1,q-\delta(p)+\rk(p)-1]=[\rk(P),q-1]$, which is independent of $p$. If we subtract $\rk(P)-1$ from both components (so our interval goes from 1 to $q-\rk(P)$ instead of from $\rk(P)$ to $q-1$) we obtain the following corollary.

\begin{corollary}
\label{cor:RankedIsRectangle}
Let $P$ be a ranked poset where the maximum length of a chain containing an element is the same for every element of $P$. Then $\Gamma(P,q)$ is in bijection with $P\times[{q-\mathrm{rk}(P)}]$.
\end{corollary}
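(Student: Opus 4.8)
The plan is to read the corollary off the preceding theorem, which already presents $\Gamma(P,q)$ as the induced subposet of $P\times\mathbb{Z}$ supported on the pairs $(p,k)$ with $k$ lying in the window $[\nu(p)+\rk(p)+1,\,q-\delta(p)+\rk(p)-1]$. In general this window depends on $p$, so the induced subposet is a genuinely staircase-shaped region inside $P\times\mathbb{Z}$; the whole content of the corollary is that under the stated chain-length hypothesis the window becomes independent of $p$, at which point the region is an honest Cartesian product.

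First I would extract the two facts the hypothesis provides: that $\delta(p)+1+\nu(p)$ takes a common value $\rk(P)$, and that $\delta$ is itself a rank function, so that $P$ is ranked and we may take $\rk=\delta$ in the preceding theorem. Substituting $\rk(p)=\delta(p)$ collapses the window: its lower endpoint $\nu(p)+\delta(p)+1$ is exactly $\rk(P)$, and its upper endpoint $q-\delta(p)+\delta(p)-1$ is exactly $q-1$, so every element $p$ contributes precisely the second coordinates in the fixed interval $[\rk(P),q-1]$. I expect the verification that $\delta$ is a genuine rank function---i.e.\ that $\delta(p_2)=\delta(p_1)+1$ for each cover $p_1\lessdot p_2$, with no larger jumps---to be the one step where the hypothesis must really be used, and hence the main obstacle; the endpoint computation afterward is routine.

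Once the window is the constant interval $[\rk(P),q-1]$, the induced subposet of $P\times\mathbb{Z}$ consists of \emph{all} pairs $(p,k)$ with $p\in P$ and $k\in[\rk(P),q-1]$, carrying the restricted product order; this is by definition the Cartesian product of $P$ with the chain $[\rk(P),q-1]$. Finally I would subtract $\rk(P)-1$ from the second coordinate, an order isomorphism of chains taking $[\rk(P),q-1]$ onto $[1,q-\rk(P)]=[q-\rk(P)]$, to obtain $\Gamma(P,q)\cong P\times[q-\rk(P)]$. Composing this shift with the isomorphism of the preceding theorem yields the asserted bijection explicitly.
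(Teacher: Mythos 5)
Your proposal is correct and follows essentially the same route as the paper: the authors likewise specialize the preceding theorem's window $[\nu(p)+\rk(p)+1,\,q-\delta(p)+\rk(p)-1]$ using $\rk=\delta$ and the constancy of $\delta(p)+1+\nu(p)=\rk(P)$ to get the fixed interval $[\rk(P),q-1]$, then shift by $\rk(P)-1$. Your remark that checking $\delta$ is a genuine rank function is where the hypothesis does real work matches the paper's (unelaborated) ``one can check'' step.
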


\begin{remark}
The previous work of \cite{DPS2015} was largely concerned with the special case of this corollary. When $P$ is the product of chains poset $[a] \times [b]$, increasing labelings are increasing tableaux. Furthermore, $\Gamma(P,q)$ is in bijection with $\inc{q}{P}$ by Theorem \ref{thm:Gamma1MeetIrred}, and also a product of three chains poset by the previous result, Corollary \ref{cor:RankedIsRectangle}. Composing these bijections yields the bijection between the product of chains poset and increasing tableaux used in \cite{DPS2015} to prove Theorem \ref{thm:dpsequivarbij}.
\end{remark}

\begin{remark}
\label{rem:Ppart}
The connection between weakly increasing labelings being in bijection with order ideals in a Cartesian product goes back to Birkhoff~\cite{Birkhoff1940} (in the equivalent context of \emph{isotone functions}). The connection between strictly and weakly increasing labelings was looked at in the equivalent context of \emph{P-partitions} by Stanley~ \cite{Stanley1972}. Stanley primarily considers increasing labelings in the case where they can be transformed to weakly increasing labelings, and primarily the case with restricted largest possible entry. We consider increasing labelings of all posets, with arbitrary restricted parts, and work out the structure of the poset of join irreducibles in this broader context. Stanley derived the poset theoretic conditions under which the injective map $f(p)\mapsto f(p)+\delta(x)$ from all weakly increasing labelings of $P$ to all strictly increasing labelings is bijective. We are primarily interested in this relationship only when it helps makes the structure of $\Gamma(P,R)$ as a Cartesian product more transparent.
\end{remark}

\begin{remark}
\label{rem:Ppart2}
In the language of $P$-partitions, our weakly and strictly increasing labelings correspond to $(P,\omega)$ partitions where $\omega$ is a natural (resp. dual natural) labeling. Our results could be extended to labelings where we arbitrarily choose which inequalities are weak vs strict (corresponding to arbitrary $(P,\omega)$ partitions). A particular application of interest would be to semistandard Young tableaux, where the inequalities are weak along rows and strict along columns.
\end{remark}

\section{Promotion and resonance on increasing labelings}
\label{sec:Promotion}
In this section, we generalize jeu de taquin promotion to $\inc{q}{P}$ and show in
Theorem~\ref{thm:bk=jdt} this action is equivalent to increasing labeling promotion $\incpro$, defined in terms of generalized Bender-Knuth involutions. 
We then give a resonance result on $\incpro$, as a corollary of Theorem~\ref{thm:bk=jdt}.

We begin by noting how Definition~\ref{def:bkR} simplifies in the case of $\inc{q}{P}$.
\begin{proposition}
\label{def:bk}
For each $i\in\mathbb{Z}$,
 the $i$th Bender-Knuth involution $\rho_i:\inc{q}{P}\rightarrow \inc{q}{P}$ acts as follows. For $x\in P$,

\[\rho_i(f)(x)=\begin{cases}
i+1 &  f(x)=i \mbox{ and the resulting labeling is still in } \inc{q}{P}\\
i &  f(x)=i+1 \mbox{ and the resulting labeling is still in } \inc{q}{P}\\
f(x) & \mbox{otherwise.}
\end{cases}\]

That is, $\rho_i$ increments $i$ and/or decrements $i+1$ \emph{wherever possible}. Then increasing labeling promotion on $f$ is the product $\incpro(f)=\rho_{q-1}\circ\cdots\circ\rho_{3}\circ\rho_{2}\circ\rho_{1}(f)$.
\end{proposition}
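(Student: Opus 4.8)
The plan is a direct verification in two parts: first that each $\rho_i$ from Definition~\ref{def:bkR} reduces to the displayed swap when the range is the interval $R(p)=[1+\delta(p),q-\nu(p)]$ of Definition~\ref{def:GammaOneq}, and then that only finitely many $\rho_i$ act nontrivially so that the infinite product of Definition~\ref{def:bkR} truncates to $\rho_{q-1}\circ\cdots\circ\rho_1$. I would begin by recording the identification $\inc{q}{P}=\inc{R}{P}$ for this $R$: the chain bounds force any increasing labeling valued in $\{1,\ldots,q\}$ to satisfy $f(p)\in[1+\delta(p),q-\nu(p)]$, and conversely. The key consequence is that for a labeling obtained from $f$ by altering only the value at a single element $p$, membership in $\inc{R}{P}$ and membership in $\inc{q}{P}$ are equivalent, so the ``still in $\inc{R}{P}$'' test of Definition~\ref{def:bkR} and the ``still in $\inc{q}{P}$'' test of the proposition coincide. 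Thus the only discrepancy to resolve is between the target value $R(p)_{>i}$ and the value $i+1$.

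The heart of the argument is that $R(p)$ is an \emph{interval}. I would show the two descriptions of $\rho_i$ agree elementwise by arguing that whenever either version moves the label at $p$, one necessarily has $R(p)_{>i}=i+1$. Indeed, a raise requires $f(p)=i\in R(p)$ with $R(p)_{>i}$ existing, so $i$ is not the maximum of the interval and $R(p)_{>i}=i+1$; a lower requires $f(p)=R(p)_{>i}$ together with $i\in R(p)$ for the result to be valid, and two elements of an interval with no element of $R(p)$ between them are consecutive, again giving $R(p)_{>i}=i+1$. Consequently a move is legal in one version exactly when it is legal in the other, and the outcomes match. The one place to be careful---and the main obstacle---is the boundary, where $i$ or $i+1$ may fall outside $R(p)$ and $R(p)_{>i}$ can genuinely differ from $i+1$ (for instance $R(p)_{>i}=\min R(p)$ when $i<\min R(p)-1$). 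Here I would check case by case ($i\geq\max R(p)$, $i=\min R(p)-1$, and $i\leq\min R(p)-2$) that any attempted swap would push a label below $\min R(p)$ or above $\max R(p)$, hence out of $\inc{q}{P}$, so both versions fix $p$; the interior range $\min R(p)\leq i<\max R(p)$ is immediate since there $R(p)_{>i}=i+1$ on the nose.

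Finally I would truncate the product. Since every label lies in $\{1,\ldots,q\}$, the swap $\rho_i$ is the identity on $\inc{q}{P}$ whenever $i\leq 0$ or $i\geq q$: a raise or lower across either boundary would create a label outside $\{1,\ldots,q\}$, which is never allowed, and no label equals such an $i$ or $i+1$ in the first place. As composition is applied right to left and all factors with $i\notin\{1,\ldots,q-1\}$ are trivial, the infinite product $\cdots\rho_2\rho_1\cdots$ of Definition~\ref{def:bkR} collapses to $\rho_{q-1}\circ\cdots\circ\rho_2\circ\rho_1$, which is exactly the claimed expression for $\incpro$ on $\inc{q}{P}$.
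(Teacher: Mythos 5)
Your proof is correct and follows essentially the same route as the paper's, which simply observes that the induced restriction function $R(x)=[1+\delta(x),q-\nu(x)]$ is an interval so that $R(x)_{>i}=i+1$ wherever a swap can occur. Your version is in fact somewhat more careful than the paper's one-line argument, since you explicitly handle the boundary values of $i$ where $R(x)_{>i}\neq i+1$ (checking both versions fix $x$ there) and justify the truncation of the infinite product to $\rho_{q-1}\circ\cdots\circ\rho_1$.
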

\begin{proof}
The induced restriction function in $\inc{q}{P}$ is $R(x)=[1+\delta(x),q-\nu(x)]$ for all $x\in P$, where $\delta$ and $\nu$ are as in Definition~\ref{def:MaxChainLen}. Since $R(x)$ is always an interval, $R(x)_{>i}=i+1$ for all $i\neq q-\nu(x)$, and Definition~\ref{def:bkR} reduces to the above.
\end{proof}

\begin{remark}
If $P$ is a partition shaped poset, $\incpro$ on $\inc{q}{P}$ is $K$-promotion on increasing tableaux of that partition shape with entries at most $q$.
If we restrict to bijective labelings of an arbitrary poset $P$, this action reduces to  promotion on linear extensions of $P$. 
\end{remark}

We give another definition in terms of generalized jeu de taquin slides. 
\begin{definition}
\label{def:jdt}
Let $\mathbb{Z}_{\boxempty}(P)$ denote the set of labelings $g:P\rightarrow(\mathbb{Z}\cup\boxempty)$.
Define the $i$th \emph{jeu de taquin slide} $\sigma_i:\mathbb{Z}_{\boxempty}(P)\rightarrow \mathbb{Z}_{\boxempty}(P)$ 
as follows: 

\[\sigma_i(g)(x)=\begin{cases}
i &  g(x)=\boxempty \mbox{ and } g(y)=i \mbox{ for some } y\gtrdot x \\
\boxempty &  g(x)=i \mbox{ and } g(z)=\boxempty \mbox{ for some } z\lessdot x\\
g(x) & \mbox{otherwise.}
\end{cases}\]
In words, $\sigma_i(g)(x)$ replaces a label $\boxempty$ with $i$ if $i$ is the label of a cover of $x$, replaces a label $i$ by $\boxempty$ if $x$ covers an element labeled by $\boxempty$, and leaves all other labels unchanged.

Let $\sigma_{i\rightarrow j}:\mathbb{Z}_{\boxempty}(P)\rightarrow \mathbb{Z}_{\boxempty}(P)$ be defined as \[\sigma_{i\rightarrow j}(g)(x)=\begin{cases} j & g(x)=i\\  g(x) &\mbox{otherwise}.\end{cases}\]
In words, $\sigma_{i\rightarrow j}(g)(x)$ replaces all labels $i$ by $j$. 

For $f\in\inc{q}{P}$, let $\jdt(f)=\sigma_{\boxempty\rightarrow (q+1)}\sigma_{q}\circ\sigma_{q-1}\circ\cdots\circ\sigma_{3}\circ\sigma_2\circ\sigma_{1\rightarrow \boxempty}(f)$. That is, first replace all $1$ labels by $\boxempty$. Then perform the $i$th jeu de taquin slide $\sigma_i$ for all $2\leq i\leq q$. Next, replace all labels $\boxempty$ by $q+1$. 
Define \emph{jeu de taquin promotion} on $f$ as $\jdtpro(f)(x)=\jdt(f)(x)-1$.
\end{definition}

\begin{proposition}
For $f\in\inc{q}{P}$, $\jdtpro(f)\in\inc{q}{P}$.
\end{proposition}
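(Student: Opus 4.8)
The plan is to separate the two requirements for membership in $\inc{q}{P}$: that $\jdtpro(f)$ take values in $\{1,\dots,q\}$, and that it be strictly increasing. The value range is the easy half. The only labels introduced during the slides $\sigma_2,\dots,\sigma_q$ are labels already present, and $\sigma_{1\to\boxempty}$ only deletes, so throughout the computation every non-empty label lies in $\{2,\dots,q\}$; after $\sigma_{\boxempty\to(q+1)}$ fills the remaining holes, $jdt(f)$ takes values in $\{2,\dots,q+1\}$ and $\jdtpro(f)=jdt(f)-1$ takes values in $\{1,\dots,q\}$. Moreover, once $\jdtpro(f)$ is known to be strictly increasing with labels in $\{1,\dots,q\}$, it automatically lies in $\inc{q}{P}$: reading labels along a maximum chain through $p$ forces $\jdtpro(f)(p)\in[1+\delta(p),q-\nu(p)]=R(p)$. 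So the real content is strict increase.

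To prove strict increase I would track the intermediate labelings. Write $g_1=\sigma_{1\to\boxempty}(f)$ and $g_i=\sigma_i(g_{i-1})$ for $2\le i\le q$, so that $jdt(f)=\sigma_{\boxempty\to(q+1)}(g_q)$. I would prove by induction on $i$ the invariant $\mathcal I(i)$ with three clauses: (a) $g_i(x)=f(x)$ whenever $f(x)>i$; (b) for every covering relation $x\lessdot y$ with $g_i(x),g_i(y)$ both non-empty, $g_i(x)<g_i(y)$; and (c) if $g_i(x)=\boxempty$ and $y\gtrdot x$, then $g_i(y)$ is non-empty with $g_i(y)>i$. Clause (c) is the crucial structural statement: it says the holes always sit immediately below the untouched region of labels exceeding $i$. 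The base case $\mathcal I(1)$ is immediate, since the holes of $g_1$ are exactly the minimal elements formerly labeled $1$, whose covers carry labels $>1$.

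The inductive step is where the work lies, and it is the main obstacle. Because these are $K$-jeu de taquin slides (a hole may pull a value down from several covers at once, and one value may be copied into several holes), labels need not form a permutation, so one cannot argue as for ordinary jeu de taquin. The key preliminary observations feeding the case analysis for $\sigma_{i+1}$ are that, by (a), the boxes carrying value $i+1$ in $g_i$ are exactly those with $f(x)=i+1$, and any box first filled during $\sigma_2,\dots,\sigma_i$ receives a value $\le i$; hence ``filled with value $>i$'' coincides with ``unchanged from $f$.'' Using (c) to locate the holes, I would examine each type of covering relation $x\lessdot y$ in $g_{i+1}$ and show no strictness violation is created. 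Every potential violation is ruled out by strictness of $f$ itself: a box keeping value $i+1$ cannot sit below a newly filled hole of value $i+1$, as this would force two equal $f$-values along a chain; and a newly filled hole of value $i+1$ can be covered in $g_{i+1}$ only by boxes with $f>i+1$, since by (c) its covers were high in $g_i$ and any cover equal to $i+1$ would have vacated (it covered that hole). The same bookkeeping re-establishes (a) and (c) for $g_{i+1}$.

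Finally I would specialize to $i=q$. Clause (c) of $\mathcal I(q)$ asserts that every cover of a hole is filled with value $>q$; but no filled box exceeds $q$, so a hole of $g_q$ has no covers, i.e.\ every remaining hole is a maximal element of $P$. Consequently $\sigma_{\boxempty\to(q+1)}$ assigns $q+1$ only to maximal elements, and combining this with clause (b) shows $jdt(f)$ is strictly increasing: on covering relations between filled boxes strictness is exactly (b), a covering relation $x\lessdot y$ with $y$ a hole has $jdt(f)(x)\le q<q+1=jdt(f)(y)$, and the case with $x$ a hole cannot occur. Subtracting $1$ preserves strictness, so $\jdtpro(f)\in\inc{q}{P}$.
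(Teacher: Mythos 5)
Your proof is correct, and it reaches the conclusion by a more structured route than the paper does. The paper's entire justification of strictness is one sentence: since the slides are applied in the order $\sigma_2,\ldots,\sigma_q$, each $\boxempty$ is filled by the smallest label among its covers; the deduction of strict increase from this, and the fact that the holes surviving to the end sit at maximal elements of $P$, are left implicit. Your argument replaces that observation with the inductive invariant $\mathcal{I}(i)$, whose clause (c) --- holes are covered only by non-empty boxes with labels exceeding $i$ --- is the precise structural fact doing the work: it forces the residual holes to be maximal (take $i=q$), and combined with clause (a) it reduces every potential strictness violation created by $\sigma_{i+1}$ to a violation of strict increase of $f$ itself, exactly as in the two cases you single out. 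The two routes analyze the same sliding process, but yours supplies a complete, checkable verification where the paper offers only the key heuristic; the only cost is length. Your closing observation that strict increase together with labels in $[q]$ automatically places $\jdtpro(f)(p)$ in the induced interval $[1+\delta(p),q-\nu(p)]$ is also a point the paper's proof passes over entirely.
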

\begin{proof}
$\jdtpro(f)$ is a labeling of $P$ by integers in $[q]$, by construction. We need only show the labeling 
preserves strict order relations. 
Since we are applying the jeu de taquin slides $\sigma_i$ in order $i=2,3,\ldots,q$, each $\boxempty$ will be filled by the smallest of the labels of its covers. 
\end{proof}

\begin{remark}
Note that the above proposition would not hold if we used $\inc{R}{P}$ instead of $\inc{q}{P}$, since the result of the generalized jeu de taquin slides and then subtracting one would no longer be guaranteed to produce labels in the ranges required by $R$.
\end{remark}

The next theorem shows that $\incpro(f)=\jdtpro(f)$ for all $f\in\inc{q}{P}$. We will need the following definition.

\begin{definition}
Define the \emph{sliding subposet} of $f$ as the induced subposet  $S(f) = \{x\in P \ | \ x$  takes the label  $\boxempty$ at some point when applying $\jdtpro \mbox{ to } f\}$.
%$S(f)\subseteq P$ such that the label on $x$ is $\boxempty$ at some point during the $\jdtpro$ algorithm of Definition~\ref{def:jdt}.
\end{definition}

\begin{remark}
The sliding subposet coincides with the \emph{flow paths} of Pechenik in the case of increasing tableaux \cite{Pechenik2014} and with the \emph{jeu de taquin sliding path} or \emph{promotion path} in the case of standard Young tableaux \cite{Stanley2009}.
\end{remark}

\begin{theorem}
\label{thm:bk=jdt}
For $f\in\inc{q}{P}$, Bender-Knuth promotion on $f$ equals jeu de taquin promotion on $f$, that is, $\incpro(f)=\jdtpro(f)$.
\end{theorem}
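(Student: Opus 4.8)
The plan is to run the two algorithms in parallel and maintain a precise dictionary between their intermediate states, so that the equality $\incpro(f)=\jdtpro(f)$ falls out at the last step. By Proposition~\ref{def:bk}, write $\incpro(f)=\rho_{q-1}\circ\cdots\circ\rho_1(f)$, and set $f_0=f$ and $f_i=\rho_i(f_{i-1})$. On the jeu de taquin side set $g_1=\sigma_{1\to\boxempty}(f)$ and $g_i=\sigma_i(g_{i-1})$ for $2\le i\le q$, so that $jdt(f)=\sigma_{\boxempty\to(q+1)}(g_q)$. The heart of the argument is the claim that for each $0\le i\le q-1$ the two states $f_i$ and $g_{i+1}$ are related elementwise by
\[
g_{i+1}(x)=\begin{cases}
f_i(x)+1 & \text{if } f_i(x)\le i,\\
\boxempty & \text{if } f_i(x)=i+1,\\
f_i(x) & \text{if } f_i(x)\ge i+2.
\end{cases}
\]
In words, after $i$ Bender--Knuth involutions and the corresponding $i$ slides, the label value $i+1$ of $f_i$ marks exactly the current holes of $g_{i+1}$, every smaller value is shifted up by one, and every larger value is untouched.

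First I would check the base case $i=0$: since $g_1$ is obtained from $f$ by turning every $1$ into $\boxempty$ and leaving all other labels fixed, the displayed relation holds with $f_0=f$. Then I would carry out the inductive step by passing from $f_i,g_{i+1}$ to $f_{i+1},g_{i+2}$, partitioning $P$ according to whether $f_i(x)$ is $\le i$, equal to $i+1$, equal to $i+2$, or $\ge i+3$. The operation $\rho_{i+1}$ only moves labels between $i+1$ and $i+2$, while $\sigma_{i+2}$ only moves labels between $\boxempty$ and $i+2$; under the inductive dictionary these act on exactly the same elements, because an $f_i$-value of $i+1$ corresponds to a hole of $g_{i+1}$ and an $f_i$-value of $i+2$ corresponds to a $g_{i+1}$-value of $i+2$. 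The key observation is a complementarity: $\rho_{i+1}$ raises an $(i+1)$ precisely when no cover has value $i+2$, whereas $\sigma_{i+2}$ fills the corresponding hole precisely when some cover has value $i+2$; dually for lowering an $(i+2)$ versus emptying it. A short case analysis (with or without a blocking cover, and with or without a blocking covered element) then shows that in each case the new pair $f_{i+1}(x),g_{i+2}(x)$ again satisfies the displayed relation with $i$ replaced by $i+1$.

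Finally I would read off the theorem from the invariant at $i=q-1$: there $g_q(x)=f_{q-1}(x)+1$ whenever $f_{q-1}(x)\le q-1$ and $g_q(x)=\boxempty$ whenever $f_{q-1}(x)=q$. Applying $\sigma_{\boxempty\to(q+1)}$ turns each such hole into $q+1=f_{q-1}(x)+1$, so after this step every element satisfies $jdt(f)(x)=f_{q-1}(x)+1=\incpro(f)(x)+1$. Subtracting $1$ yields $\jdtpro(f)(x)=\incpro(f)(x)$ for all $x$, as desired.

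I expect the inductive step to be the main obstacle, and within it the delicate point is that $\rho_{i+1}$ and $\sigma_{i+2}$ are applied \emph{simultaneously}: each element's fate is decided from the pre-operation state. I would stress that the dictionary preserves exactly the adjacency data these rules consult---the values of covers and of covered elements---so the blocking conditions translate faithfully, including the interaction where an $(i+1)$ sits directly below an $(i+2)$, in which the slide performs a genuine swap while the Bender--Knuth step leaves both labels fixed. Verifying that this swap case is consistent with the shifted invariant is the crux of the verification.
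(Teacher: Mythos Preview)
Your proof is correct and takes a genuinely different route from the paper's. The paper fixes an element $x$ and splits into three cases according to whether $x$ lies in the \emph{sliding subposet} $S(f)$ (the set of elements that ever carry a $\boxempty$ during jeu de taquin) and whether $x$ is maximal; it then tracks the label of $x$ through the entire sequence of Bender--Knuth involutions, comparing the final result to the jeu de taquin output. Your approach instead fixes the step index $i$ and proves an explicit invariant relating the entire intermediate states $f_i$ and $g_{i+1}$ by induction on $i$. The paper's argument yields the structural byproduct that $S(f)$ is a union of maximal saturated chains and pinpoints the role of the ``flow paths,'' whereas your approach is more mechanical and avoids introducing $S(f)$ altogether; it also makes the complementarity between $\rho_{i+1}$ and $\sigma_{i+2}$ completely transparent. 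One point worth making explicit when you write it up: your characterization ``$\rho_{i+1}$ raises $i{+}1$ iff no cover has value $i{+}2$'' tacitly assumes the range condition $i{+}2\in R(x)$ is automatic. It is, because if the incremented labeling is strictly increasing with values in $[q]$ then a longest chain above $x$ forces $i+2\le q-\nu(x)$; the dual check for decrementing $i{+}2$ uses a longest chain below $x$. This is immediate but deserves a sentence, since $\inc{q}{P}$ is defined via the induced restriction function.
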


\begin{proof}
Let $f\in\inc{q}{P}$. We wish to show that for all $x\in P$, 
\begin{equation}
\label{eq:jdtrho}
\rho_{q-1}\circ\rho_{q-2}\circ\cdots\circ\rho_2\circ\rho_1(f)(x)=\jdt(f)(x)-1.
\end{equation}

Note the sliding subposet $S(f)$ is a  subposet of $P$ whose minimal elements are minimal in $P$ and whose maximal elements are maximal in $P$, that is, $S(f)$ is a union of maximal saturated chains. This is because the $\boxempty$ labels are first introduced as labels of minimal elements, then at the intermediate steps $\sigma_i$, anywhere a $\boxempty$ label is replaced, other $\boxempty$ label(s) are introduced in a cover(s) of the previous element(s) labeled by $\boxempty$.

\smallskip
We prove Equation \eqref{eq:jdtrho} using three cases:

\smallskip
\textbf{Case $x\in S(f)$ and $x$ is not maximal in $P$:} In this case, $\jdt(f)(x)=f(y)$ where $y$ is such that $f(y)=\min\left\{f(z) \ | \ z\gtrdot x \right\}$. 
We now need to show $\rho_{q-1}\circ\rho_{q-2}\circ\cdots\circ\rho_2\circ\rho_1(f)(x)=f(y)-1$.

Let $e_0,e_1,\ldots,e_{k-1},e_k=x,e_{k+1},\ldots,e_{m}$ be a saturated chain in the sliding subposet $S(f)$ from a minimal element to a maximal element. It must be that each $e_i$ is such that $f(e_i)$ is the minimal label among all labels of covers of $e_{i-1}$, or else $e_i$ would not be on $S(f)$. We know $f(e_0)=1$ since $e_0$ is a minimal element of $P$ in $S(f)$. The Bender-Knuth involutions increment this label until it can no longer be incremented, that is, until it is one less than the smallest label of one (or more) of its covers. A cover with this smallest label is $e_1$, or else $e_1$ would not be on the sliding subposet. So $\rho_{f(e_1)-1}\circ\cdots\circ\rho_2\circ\rho_1(f)(e_0)=f(e_1)-1$.

Now suppose that $\rho_{f(e_i)-1}\circ\cdots\circ\rho_2\circ\rho_1(f)(e_{i-1})=f(e_i)-1$. We wish to show $\rho_{f(e_{i+1})-1}\circ\cdots\circ\rho_2\circ\rho_1(f)(e_{i})=f(e_{i+1})-1$. We have $f(e_{i})>f(e_{i-1})$ since $f$ is an increasing labeling, so we know $\rho_{f(e_{i-1})-1}\circ\cdots\circ\rho_2\circ\rho_1(f)(e_{i})=f(e_i)$. Then since $e_i\gtrdot e_{i-1}$ and $\rho_{f(e_i)-1}\circ\cdots\circ\rho_2\circ\rho_1(f)(e_{i-1})=f(e_i)-1$, $\rho_{f(e_i)-1}$ cannot decrement $f(e_i)$, since it covers an element with label $f(e_i)-1$. So $\rho_{f(e_i)-1}\circ\cdots\circ\rho_2\circ\rho_1(f)(e_{i})=f(e_i)$ and then the subsequent Bender-Knuth involutions increment the label of $e_i$ until it can no longer be incremented, that is, until the label on $e_i$ equals $f(e_{i+1})-1$. This proves the statement.

\smallskip
\textbf{Case $x\in S(f)$ and $x$ is maximal in $P$:} In this case, since $x$ is on the sliding subposet, it must be that $\sigma_{q}\circ\sigma_{q-1}\circ\cdots\circ\sigma_{3}\circ\sigma_2\circ\sigma_{1\rightarrow \boxempty}(f)=\boxempty$, so $\sigma_{\boxempty\rightarrow q+1}$ fills $\boxempty$ with $q+1$. Thus  $\jdt(f)(x)=q+1$, so $\jdt(f)(x)-1=q$. Thus we need to show $\rho_{q-1}\circ\rho_{q-2}\circ\cdots\circ\rho_2\circ\rho_1(f)(x)=q$ as well. 
The proof follows similarly as in the previous case, except in the last part: since $x$ is maximal in $P$, the label on $x$ will be incremented all the way to $q$.

\smallskip
\textbf{Case $x\notin S(f)$:} 
In this case, $jdt(f)(x)-1=f(x)-1$, since the only thing that happens to $x$ during the algorithm is that its label is decremented at the end. We wish to show that $\rho_{q-1}\circ\rho_{q-2}\circ\cdots\circ\rho_2\circ\rho_1(f)(x)=f(x)-1$. If $x$ covers no element with label $f(x)-1$ after the application of $\rho_{f(x)-2}\circ\cdots\rho_2\circ\rho_1$, then we are done, because $\rho_{f(x)-1}$ will decrement the label on $x$ and all other Bender Knuth involutions act trivially at $x$. 

Suppose $x$ covers an element or multiple elements with label $f(x)-1$ after the application of $\rho_{f(x)-2}\circ\cdots\rho_2\circ\rho_1$. Let $X_1$ be the set of all such elements. (Also, let $X_0=\{x\}$.)
Likewise, let $X_j$ be the set of all elements that are covered by some element of $X_{j-1}$ and have label $f(x)-j$ after the application of $\rho_{f(x)-j-1}\circ\cdots\rho_2\circ\rho_1$. 
The only way for the label on $x$ to not be decremented by $\rho_{f(x)-1}$ is if each $X_j$ is nonempty. 
For if some $X_j$ is empty, then no element of $X_{j-1}$ covers an element that has label $f(x)-j$ after the application of $\rho_{f(x)-j-1}\circ\cdots\rho_2\circ\rho_1$. Then, there is no possibility that any element of $X_{j-1}$ may be incremented by $\rho_{f(x)-j-2}$, thus $X_{j-1}$ must also be empty. Therefore, this implies $X_{k}$ is empty for all $k\leq j$. 

Assume the $X_j$ are all nonempty. Then consider the subposet $Y = \cup_{j} X_j$. Note that every element of $Y$ is comparable with (less than) $x$ in $P$. The minimal elements of $Y$ cannot initially have label $1$, since otherwise, all elements of $Y$ greater than the element labeled $1$ will be in the sliding subposet $S(f)$, and we had assumed $x\not\in S(f)$. Suppose no minimal element of $Y$ is labeled $1$. A minimal element of $Y$ labeled by $i>1$ will be decremented by $\rho_{i-1}$, thus this element cannot be in $Y$. 
So in either case, $\rho_{q-1}\circ\rho_{q-2}\circ\cdots\circ\rho_2\circ\rho_1(f)(x)=f(x)-1$.
\end{proof}

See Figures~\ref{fig:slide} through \ref{fig:ProResult} for an example.

\begin{figure}[htbp]
\begin{tikzpicture}
\begin{scope}[scale=1,xshift=0cm,yshift=0cm]
\coordinate (a1) at (1,0);
\coordinate (a2) at (2,0);
\coordinate (a3) at (3,0);
\coordinate (b1) at (.5,1);
\coordinate (b2) at (1.5,1);
\coordinate (b3) at (2.5,1);
\coordinate (b4) at (3.5,1);
\coordinate (c1) at (1,2);
\coordinate (c2) at (2,2);
\coordinate (c3) at (3,2);
\draw[] (a1) -- (b1);
\draw[] (a1) -- (b3);
\draw[] (a2) -- (b1);
\draw[] (a2) -- (b2);
\draw[] (a2) -- (b3);
\draw[] (a2) -- (b4);
\draw[] (a3) -- (b2);
\draw[] (a3) -- (b4);
\draw[] (b1) -- (c1);
\draw[] (b1) -- (c2);
\draw[] (b2) -- (c1);
\draw[] (b2) -- (c2);
\draw[] (b3) -- (c2);
\draw[] (b3) -- (c3);
\draw[] (b4) -- (c2);
\draw[] (b4) -- (c3);
\node[fill=white,draw,circle,inner sep=.5ex] at (a1) {1};
\node[fill=white,draw,circle,inner sep=.5ex] at (a2) {3};
\node[fill=white,draw,circle,inner sep=.5ex] at (a3) {1};
\node[fill=white,draw,circle,inner sep=.5ex] at (b1) {5};
\node[fill=white,draw,circle,inner sep=.5ex] at (b2) {4};
\node[fill=white,draw,circle,inner sep=.5ex] at (b3) {6};
\node[fill=white,draw,circle,inner sep=.5ex] at (b4) {4};
\node[fill=white,draw,circle,inner sep=.5ex] at (c1) {7};
\node[fill=white,draw,circle,inner sep=.5ex] at (c2) {8};
\node[fill=white,draw,circle,inner sep=.5ex] at (c3) {7};
\node[] at (4.2,1.6) {$\underrightarrow{\sigma_{1\rightarrow \boxempty}}$};
\end{scope}

\begin{scope}[scale=1,xshift=4.2cm,yshift=0cm]
\coordinate (a1) at (1,0);
\coordinate (a2) at (2,0);
\coordinate (a3) at (3,0);
\coordinate (b1) at (.5,1);
\coordinate (b2) at (1.5,1);
\coordinate (b3) at (2.5,1);
\coordinate (b4) at (3.5,1);
\coordinate (c1) at (1,2);
\coordinate (c2) at (2,2);
\coordinate (c3) at (3,2);
\draw[] (a1) -- (b1);
\draw[] (a1) -- (b3);
\draw[] (a2) -- (b1);
\draw[] (a2) -- (b2);
\draw[] (a2) -- (b3);
\draw[] (a2) -- (b4);
\draw[] (a3) -- (b2);
\draw[] (a3) -- (b4);
\draw[] (b1) -- (c1);
\draw[] (b1) -- (c2);
\draw[] (b2) -- (c1);
\draw[] (b2) -- (c2);
\draw[] (b3) -- (c2);
\draw[] (b3) -- (c3);
\draw[] (b4) -- (c2);
\draw[] (b4) -- (c3);
\node[fill=white,draw,circle,inner sep=.5ex] at (a1) {\phantom{a}};
\node[fill=white,draw,circle,inner sep=.5ex] at (a2) {3};
\node[fill=white,draw,circle,inner sep=.5ex] at (a3) {\phantom{a}};
\node[fill=white,draw,circle,inner sep=.5ex] at (b1) {5};
\node[fill=white,draw,circle,inner sep=.5ex] at (b2) {4};
\node[fill=white,draw,circle,inner sep=.5ex] at (b3) {6};
\node[fill=white,draw,circle,inner sep=.5ex] at (b4) {4};
\node[fill=white,draw,circle,inner sep=.5ex] at (c1) {7};
\node[fill=white,draw,circle,inner sep=.5ex] at (c2) {8};
\node[fill=white,draw,circle,inner sep=.5ex] at (c3) {7};
\node[] at (4.2,1.6) {$\underrightarrow{\sigma_{4}}$};
\end{scope}

\begin{scope}[scale=1,xshift=8.4cm,yshift=0cm]
\coordinate (a1) at (1,0);
\coordinate (a2) at (2,0);
\coordinate (a3) at (3,0);
\coordinate (b1) at (.5,1);
\coordinate (b2) at (1.5,1);
\coordinate (b3) at (2.5,1);
\coordinate (b4) at (3.5,1);
\coordinate (c1) at (1,2);
\coordinate (c2) at (2,2);
\coordinate (c3) at (3,2);
\draw[] (a1) -- (b1);
\draw[] (a1) -- (b3);
\draw[] (a2) -- (b1);
\draw[] (a2) -- (b2);
\draw[] (a2) -- (b3);
\draw[] (a2) -- (b4);
\draw[] (a3) -- (b2);
\draw[] (a3) -- (b4);
\draw[] (b1) -- (c1);
\draw[] (b1) -- (c2);
\draw[] (b2) -- (c1);
\draw[] (b2) -- (c2);
\draw[] (b3) -- (c2);
\draw[] (b3) -- (c3);
\draw[] (b4) -- (c2);
\draw[] (b4) -- (c3);
\node[fill=white,draw,circle,inner sep=.5ex] at (a1) {\phantom{a}};
\node[fill=white,draw,circle,inner sep=.5ex] at (a2) {3};
\node[fill=white,draw,circle,inner sep=.5ex] at (a3) {4};
\node[fill=white,draw,circle,inner sep=.5ex] at (b1) {5};
\node[fill=white,draw,circle,inner sep=.5ex] at (b2) {\phantom{a}};
\node[fill=white,draw,circle,inner sep=.5ex] at (b3) {6};
\node[fill=white,draw,circle,inner sep=.5ex] at (b4) {\phantom{a}};
\node[fill=white,draw,circle,inner sep=.5ex] at (c1) {7};
\node[fill=white,draw,circle,inner sep=.5ex] at (c2) {8};
\node[fill=white,draw,circle,inner sep=.5ex] at (c3) {7};
\node[] at (4.2,1.6) {$\underrightarrow{\sigma_{5}}$};
\end{scope}

\begin{scope}[scale=1,xshift=12.6cm,yshift=0cm]
\coordinate (a1) at (1,0);
\coordinate (a2) at (2,0);
\coordinate (a3) at (3,0);
\coordinate (b1) at (.5,1);
\coordinate (b2) at (1.5,1);
\coordinate (b3) at (2.5,1);
\coordinate (b4) at (3.5,1);
\coordinate (c1) at (1,2);
\coordinate (c2) at (2,2);
\coordinate (c3) at (3,2);
\draw[] (a1) -- (b1);
\draw[] (a1) -- (b3);
\draw[] (a2) -- (b1);
\draw[] (a2) -- (b2);
\draw[] (a2) -- (b3);
\draw[] (a2) -- (b4);
\draw[] (a3) -- (b2);
\draw[] (a3) -- (b4);
\draw[] (b1) -- (c1);
\draw[] (b1) -- (c2);
\draw[] (b2) -- (c1);
\draw[] (b2) -- (c2);
\draw[] (b3) -- (c2);
\draw[] (b3) -- (c3);
\draw[] (b4) -- (c2);
\draw[] (b4) -- (c3);
\node[fill=white,draw,circle,inner sep=.5ex] at (a1) {5};
\node[fill=white,draw,circle,inner sep=.5ex] at (a2) {3};
\node[fill=white,draw,circle,inner sep=.5ex] at (a3) {4};
\node[fill=white,draw,circle,inner sep=.5ex] at (b1) {\phantom{a}};
\node[fill=white,draw,circle,inner sep=.5ex] at (b2) {\phantom{a}};
\node[fill=white,draw,circle,inner sep=.5ex] at (b3) {6};
\node[fill=white,draw,circle,inner sep=.5ex] at (b4) {\phantom{a}};
\node[fill=white,draw,circle,inner sep=.5ex] at (c1) {7};
\node[fill=white,draw,circle,inner sep=.5ex] at (c2) {8};
\node[fill=white,draw,circle,inner sep=.5ex] at (c3) {7};
\end{scope}

\begin{scope}[scale=1,xshift=12.6cm,yshift=-4cm]
\coordinate (a1) at (1,0);
\coordinate (a2) at (2,0);
\coordinate (a3) at (3,0);
\coordinate (b1) at (.5,1);
\coordinate (b2) at (1.5,1);
\coordinate (b3) at (2.5,1);
\coordinate (b4) at (3.5,1);
\coordinate (c1) at (1,2);
\coordinate (c2) at (2,2);
\coordinate (c3) at (3,2);
\draw[] (a1) -- (b1);
\draw[] (a1) -- (b3);
\draw[] (a2) -- (b1);
\draw[] (a2) -- (b2);
\draw[] (a2) -- (b3);
\draw[] (a2) -- (b4);
\draw[] (a3) -- (b2);
\draw[] (a3) -- (b4);
\draw[] (b1) -- (c1);
\draw[] (b1) -- (c2);
\draw[] (b2) -- (c1);
\draw[] (b2) -- (c2);
\draw[] (b3) -- (c2);
\draw[] (b3) -- (c3);
\draw[] (b4) -- (c2);
\draw[] (b4) -- (c3);
\node[fill=white,draw,circle,inner sep=.5ex] at (a1) {5};
\node[fill=white,draw,circle,inner sep=.5ex] at (a2) {3};
\node[fill=white,draw,circle,inner sep=.5ex] at (a3) {4};
\node[fill=white,draw,circle,inner sep=.5ex] at (b1) {7};
\node[fill=white,draw,circle,inner sep=.5ex] at (b2) {7};
\node[fill=white,draw,circle,inner sep=.5ex] at (b3) {6};
\node[fill=white,draw,circle,inner sep=.5ex] at (b4) {7};
\node[fill=white,draw,circle,inner sep=.5ex] at (c1) {\phantom{a}};
\node[fill=white,draw,circle,inner sep=.5ex] at (c2) {8};
\node[fill=white,draw,circle,inner sep=.5ex] at (c3) {\phantom{a}};
\draw[thick,->] (2,3.3) -- (2,2.7) node[midway,left] {$\sigma_{7}$};
\end{scope}

\begin{scope}[scale=1,xshift=6.3cm,yshift=-4cm]
\coordinate (a1) at (1,0);
\coordinate (a2) at (2,0);
\coordinate (a3) at (3,0);
\coordinate (b1) at (.5,1);
\coordinate (b2) at (1.5,1);
\coordinate (b3) at (2.5,1);
\coordinate (b4) at (3.5,1);
\coordinate (c1) at (1,2);
\coordinate (c2) at (2,2);
\coordinate (c3) at (3,2);
\draw[] (a1) -- (b1);
\draw[] (a1) -- (b3);
\draw[] (a2) -- (b1);
\draw[] (a2) -- (b2);
\draw[] (a2) -- (b3);
\draw[] (a2) -- (b4);
\draw[] (a3) -- (b2);
\draw[] (a3) -- (b4);
\draw[] (b1) -- (c1);
\draw[] (b1) -- (c2);
\draw[] (b2) -- (c1);
\draw[] (b2) -- (c2);
\draw[] (b3) -- (c2);
\draw[] (b3) -- (c3);
\draw[] (b4) -- (c2);
\draw[] (b4) -- (c3);
\node[fill=white,draw,circle,inner sep=.5ex] at (a1) {5};
\node[fill=white,draw,circle,inner sep=.5ex] at (a2) {3};
\node[fill=white,draw,circle,inner sep=.5ex] at (a3) {4};
\node[fill=white,draw,circle,inner sep=.5ex] at (b1) {7};
\node[fill=white,draw,circle,inner sep=.5ex] at (b2) {7};
\node[fill=white,draw,circle,inner sep=.5ex] at (b3) {6};
\node[fill=white,draw,circle,inner sep=.5ex] at (b4) {7};
\node[fill=white,draw,circle,inner sep=.5ex] at (c1) {9};
\node[fill=white,draw,circle,inner sep=.5ex] at (c2) {8};
\node[fill=white,draw,circle,inner sep=.5ex] at (c3) {9};
\node[] at (5.1,1.6) {$\underleftarrow{\sigma_{\boxempty\rightarrow (q+1)}}$};
\end{scope}

\begin{scope}[scale=1,xshift=0cm,yshift=-4cm]
\coordinate (a1) at (1,0);
\coordinate (a2) at (2,0);
\coordinate (a3) at (3,0);
\coordinate (b1) at (.5,1);
\coordinate (b2) at (1.5,1);
\coordinate (b3) at (2.5,1);
\coordinate (b4) at (3.5,1);
\coordinate (c1) at (1,2);
\coordinate (c2) at (2,2);
\coordinate (c3) at (3,2);
\draw[] (a1) -- (b1);
\draw[] (a1) -- (b3);
\draw[] (a2) -- (b1);
\draw[] (a2) -- (b2);
\draw[] (a2) -- (b3);
\draw[] (a2) -- (b4);
\draw[] (a3) -- (b2);
\draw[] (a3) -- (b4);
\draw[] (b1) -- (c1);
\draw[] (b1) -- (c2);
\draw[] (b2) -- (c1);
\draw[] (b2) -- (c2);
\draw[] (b3) -- (c2);
\draw[] (b3) -- (c3);
\draw[] (b4) -- (c2);
\draw[] (b4) -- (c3);
\node[fill=white,draw,circle,inner sep=.5ex] at (a1) {4};
\node[fill=white,draw,circle,inner sep=.5ex] at (a2) {2};
\node[fill=white,draw,circle,inner sep=.5ex] at (a3) {3};
\node[fill=white,draw,circle,inner sep=.5ex] at (b1) {6};
\node[fill=white,draw,circle,inner sep=.5ex] at (b2) {6};
\node[fill=white,draw,circle,inner sep=.5ex] at (b3) {5};
\node[fill=white,draw,circle,inner sep=.5ex] at (b4) {6};
\node[fill=white,draw,circle,inner sep=.5ex] at (c1) {8};
\node[fill=white,draw,circle,inner sep=.5ex] at (c2) {7};
\node[fill=white,draw,circle,inner sep=.5ex] at (c3) {8};
\node[] at (5.1,1.6) {$\underleftarrow{\textrm{subtract 1}}$};
\end{scope}
\end{tikzpicture}
\caption{Example of $\jdtpro$ acting on an increasing labeling of a poset, showing only $\sigma_i$ acting non-trivially.}
\label{fig:slide}
\end{figure}
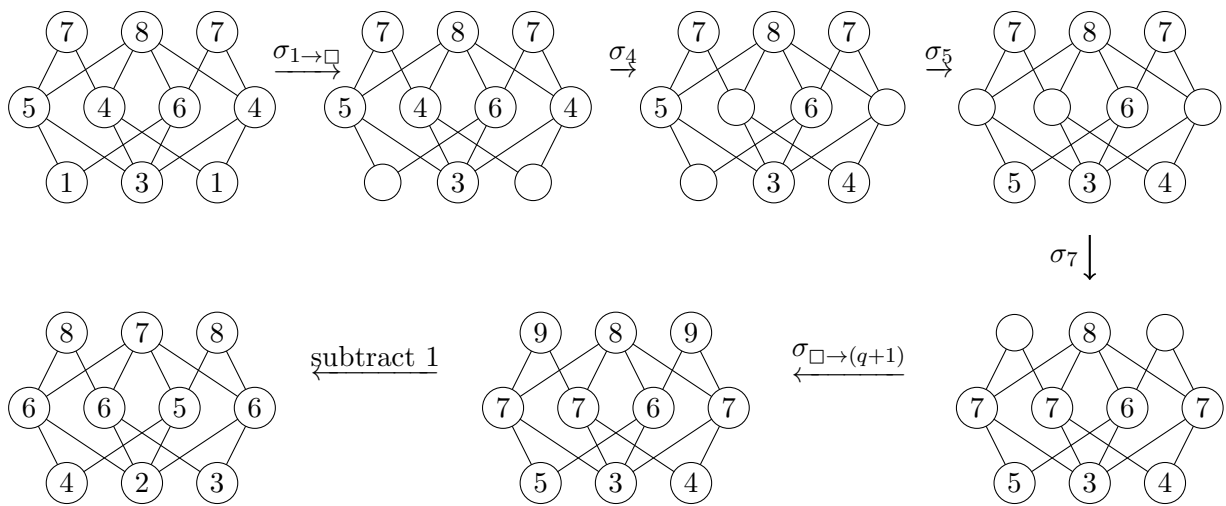

\begin{figure}[htbp]
\begin{tikzpicture}[scale=2]
\usetikzlibrary{decorations.markings}
\usetikzlibrary{arrows}
\coordinate (a1) at (1,0);
\coordinate (a2) at (2,0);
\coordinate (a3) at (3,0);
\coordinate (b1) at (.5,1);
\coordinate (b2) at (1.5,1);
\coordinate (b3) at (2.5,1);
\coordinate (b4) at (3.5,1);
\coordinate (c1) at (1,2);
\coordinate (c2) at (2,2);
\coordinate (c3) at (3,2);
\draw[very thick,decoration={markings,
    mark=at position 0.6 with {\arrow[scale=1.5]{latex}}},
    postaction=decorate] (a1) -- (b1);
\draw[] (a1) -- (b3);
\draw[] (a2) -- (b1);
\draw[] (a2) -- (b2);
\draw[] (a2) -- (b3);
\draw[] (a2) -- (b4);
\draw[very thick,decoration={markings,
    mark=at position 0.55 with {\arrow[scale=1.5]{latex}}},
    postaction=decorate] (a3) -- (b2);
\draw[very thick,decoration={markings,
    mark=at position 0.6 with {\arrow[scale=1.5]{latex}}},
    postaction=decorate] (a3) -- (b4);
\draw[very thick,decoration={markings,
    mark=at position 0.6 with {\arrow[scale=1.5]{latex}}},
    postaction=decorate] (b1) -- (c1);
\draw[] (b1) -- (c2);
\draw[very thick,decoration={markings,
    mark=at position 0.6 with {\arrow[scale=1.5]{latex}}},
    postaction=decorate] (b2) -- (c1);
\draw[] (b2) -- (c2);
\draw[] (b3) -- (c2);
\draw[] (b3) -- (c3);
\draw[] (b4) -- (c2);
\draw[very thick,decoration={markings,
    mark=at position 0.6 with {\arrow[scale=1.5]{latex}}},
    postaction=decorate] (b4) -- (c3);
\node[fill=white,draw,circle,inner sep=.5ex] at (a1) {1};
\node[fill=white,draw,circle,inner sep=.5ex] at (a2) {3};
\node[fill=white,draw,circle,inner sep=.5ex] at (a3) {1};
\node[fill=white,draw,circle,inner sep=.5ex] at (b1) {5};
\node[fill=white,draw,circle,inner sep=.5ex] at (b2) {4};
\node[fill=white,draw,circle,inner sep=.5ex] at (b3) {6};
\node[fill=white,draw,circle,inner sep=.5ex] at (b4) {4};
\node[fill=white,draw,circle,inner sep=.5ex] at (c1) {7};
\node[fill=white,draw,circle,inner sep=.5ex] at (c2) {8};
\node[fill=white,draw,circle,inner sep=.5ex] at (c3) {7};
\end{tikzpicture}
\caption{An increasing labeling of a poset with $q=8$; the sliding subposet when performing jeu de taquin promotion is indicated by edges with arrows.}
\label{fig:slide2}
\end{figure}
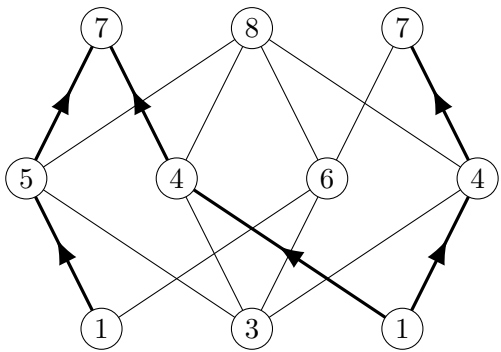

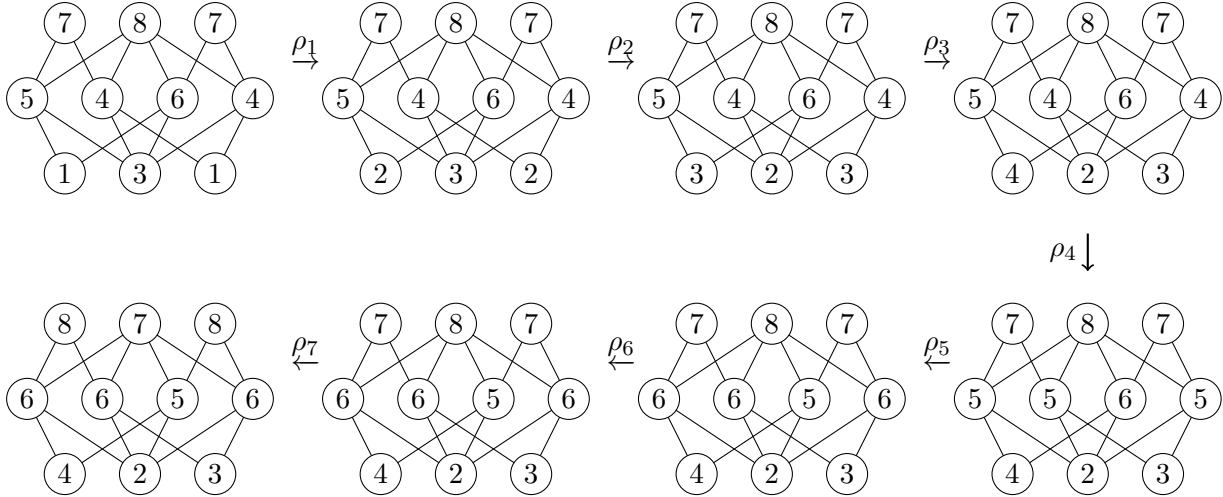
\begin{figure}[htbp]
\begin{tikzpicture}
\begin{scope}[scale=1,xshift=0cm,yshift=0cm]
\coordinate (a1) at (1,0);
\coordinate (a2) at (2,0);
\coordinate (a3) at (3,0);
\coordinate (b1) at (.5,1);
\coordinate (b2) at (1.5,1);
\coordinate (b3) at (2.5,1);
\coordinate (b4) at (3.5,1);
\coordinate (c1) at (1,2);
\coordinate (c2) at (2,2);
\coordinate (c3) at (3,2);
\draw[] (a1) -- (b1);
\draw[] (a1) -- (b3);
\draw[] (a2) -- (b1);
\draw[] (a2) -- (b2);
\draw[] (a2) -- (b3);
\draw[] (a2) -- (b4);
\draw[] (a3) -- (b2);
\draw[] (a3) -- (b4);
\draw[] (b1) -- (c1);
\draw[] (b1) -- (c2);
\draw[] (b2) -- (c1);
\draw[] (b2) -- (c2);
\draw[] (b3) -- (c2);
\draw[] (b3) -- (c3);
\draw[] (b4) -- (c2);
\draw[] (b4) -- (c3);
\node[fill=white,draw,circle,inner sep=.5ex] at (a1) {1};
\node[fill=white,draw,circle,inner sep=.5ex] at (a2) {3};
\node[fill=white,draw,circle,inner sep=.5ex] at (a3) {1};
\node[fill=white,draw,circle,inner sep=.5ex] at (b1) {5};
\node[fill=white,draw,circle,inner sep=.5ex] at (b2) {4};
\node[fill=white,draw,circle,inner sep=.5ex] at (b3) {6};
\node[fill=white,draw,circle,inner sep=.5ex] at (b4) {4};
\node[fill=white,draw,circle,inner sep=.5ex] at (c1) {7};
\node[fill=white,draw,circle,inner sep=.5ex] at (c2) {8};
\node[fill=white,draw,circle,inner sep=.5ex] at (c3) {7};
\node[] at (4.2,1.6) {$\underrightarrow{\rho_{1}}$};
\end{scope}

\begin{scope}[scale=1,xshift=4.2cm,yshift=0cm]
\coordinate (a1) at (1,0);
\coordinate (a2) at (2,0);
\coordinate (a3) at (3,0);
\coordinate (b1) at (.5,1);
\coordinate (b2) at (1.5,1);
\coordinate (b3) at (2.5,1);
\coordinate (b4) at (3.5,1);
\coordinate (c1) at (1,2);
\coordinate (c2) at (2,2);
\coordinate (c3) at (3,2);
\draw[] (a1) -- (b1);
\draw[] (a1) -- (b3);
\draw[] (a2) -- (b1);
\draw[] (a2) -- (b2);
\draw[] (a2) -- (b3);
\draw[] (a2) -- (b4);
\draw[] (a3) -- (b2);
\draw[] (a3) -- (b4);
\draw[] (b1) -- (c1);
\draw[] (b1) -- (c2);
\draw[] (b2) -- (c1);
\draw[] (b2) -- (c2);
\draw[] (b3) -- (c2);
\draw[] (b3) -- (c3);
\draw[] (b4) -- (c2);
\draw[] (b4) -- (c3);
\node[fill=white,draw,circle,inner sep=.5ex] at (a1) {2};
\node[fill=white,draw,circle,inner sep=.5ex] at (a2) {3};
\node[fill=white,draw,circle,inner sep=.5ex] at (a3) {2};
\node[fill=white,draw,circle,inner sep=.5ex] at (b1) {5};
\node[fill=white,draw,circle,inner sep=.5ex] at (b2) {4};
\node[fill=white,draw,circle,inner sep=.5ex] at (b3) {6};
\node[fill=white,draw,circle,inner sep=.5ex] at (b4) {4};
\node[fill=white,draw,circle,inner sep=.5ex] at (c1) {7};
\node[fill=white,draw,circle,inner sep=.5ex] at (c2) {8};
\node[fill=white,draw,circle,inner sep=.5ex] at (c3) {7};
\node[] at (4.2,1.6) {$\underrightarrow{\rho_{2}}$};
\end{scope}

\begin{scope}[scale=1,xshift=8.4cm,yshift=0cm]
\coordinate (a1) at (1,0);
\coordinate (a2) at (2,0);
\coordinate (a3) at (3,0);
\coordinate (b1) at (.5,1);
\coordinate (b2) at (1.5,1);
\coordinate (b3) at (2.5,1);
\coordinate (b4) at (3.5,1);
\coordinate (c1) at (1,2);
\coordinate (c2) at (2,2);
\coordinate (c3) at (3,2);
\draw[] (a1) -- (b1);
\draw[] (a1) -- (b3);
\draw[] (a2) -- (b1);
\draw[] (a2) -- (b2);
\draw[] (a2) -- (b3);
\draw[] (a2) -- (b4);
\draw[] (a3) -- (b2);
\draw[] (a3) -- (b4);
\draw[] (b1) -- (c1);
\draw[] (b1) -- (c2);
\draw[] (b2) -- (c1);
\draw[] (b2) -- (c2);
\draw[] (b3) -- (c2);
\draw[] (b3) -- (c3);
\draw[] (b4) -- (c2);
\draw[] (b4) -- (c3);
\node[fill=white,draw,circle,inner sep=.5ex] at (a1) {3};
\node[fill=white,draw,circle,inner sep=.5ex] at (a2) {2};
\node[fill=white,draw,circle,inner sep=.5ex] at (a3) {3};
\node[fill=white,draw,circle,inner sep=.5ex] at (b1) {5};
\node[fill=white,draw,circle,inner sep=.5ex] at (b2) {4};
\node[fill=white,draw,circle,inner sep=.5ex] at (b3) {6};
\node[fill=white,draw,circle,inner sep=.5ex] at (b4) {4};
\node[fill=white,draw,circle,inner sep=.5ex] at (c1) {7};
\node[fill=white,draw,circle,inner sep=.5ex] at (c2) {8};
\node[fill=white,draw,circle,inner sep=.5ex] at (c3) {7};
\node[] at (4.2,1.6) {$\underrightarrow{\rho_{3}}$};
\end{scope}

\begin{scope}[scale=1,xshift=12.6cm,yshift=0cm]
\coordinate (a1) at (1,0);
\coordinate (a2) at (2,0);
\coordinate (a3) at (3,0);
\coordinate (b1) at (.5,1);
\coordinate (b2) at (1.5,1);
\coordinate (b3) at (2.5,1);
\coordinate (b4) at (3.5,1);
\coordinate (c1) at (1,2);
\coordinate (c2) at (2,2);
\coordinate (c3) at (3,2);
\draw[] (a1) -- (b1);
\draw[] (a1) -- (b3);
\draw[] (a2) -- (b1);
\draw[] (a2) -- (b2);
\draw[] (a2) -- (b3);
\draw[] (a2) -- (b4);
\draw[] (a3) -- (b2);
\draw[] (a3) -- (b4);
\draw[] (b1) -- (c1);
\draw[] (b1) -- (c2);
\draw[] (b2) -- (c1);
\draw[] (b2) -- (c2);
\draw[] (b3) -- (c2);
\draw[] (b3) -- (c3);
\draw[] (b4) -- (c2);
\draw[] (b4) -- (c3);
\node[fill=white,draw,circle,inner sep=.5ex] at (a1) {4};
\node[fill=white,draw,circle,inner sep=.5ex] at (a2) {2};
\node[fill=white,draw,circle,inner sep=.5ex] at (a3) {3};
\node[fill=white,draw,circle,inner sep=.5ex] at (b1) {5};
\node[fill=white,draw,circle,inner sep=.5ex] at (b2) {4};
\node[fill=white,draw,circle,inner sep=.5ex] at (b3) {6};
\node[fill=white,draw,circle,inner sep=.5ex] at (b4) {4};
\node[fill=white,draw,circle,inner sep=.5ex] at (c1) {7};
\node[fill=white,draw,circle,inner sep=.5ex] at (c2) {8};
\node[fill=white,draw,circle,inner sep=.5ex] at (c3) {7};
\end{scope}

\begin{scope}[scale=1,xshift=12.6cm,yshift=-4cm]
\coordinate (a1) at (1,0);
\coordinate (a2) at (2,0);
\coordinate (a3) at (3,0);
\coordinate (b1) at (.5,1);
\coordinate (b2) at (1.5,1);
\coordinate (b3) at (2.5,1);
\coordinate (b4) at (3.5,1);
\coordinate (c1) at (1,2);
\coordinate (c2) at (2,2);
\coordinate (c3) at (3,2);
\draw[] (a1) -- (b1);
\draw[] (a1) -- (b3);
\draw[] (a2) -- (b1);
\draw[] (a2) -- (b2);
\draw[] (a2) -- (b3);
\draw[] (a2) -- (b4);
\draw[] (a3) -- (b2);
\draw[] (a3) -- (b4);
\draw[] (b1) -- (c1);
\draw[] (b1) -- (c2);
\draw[] (b2) -- (c1);
\draw[] (b2) -- (c2);
\draw[] (b3) -- (c2);
\draw[] (b3) -- (c3);
\draw[] (b4) -- (c2);
\draw[] (b4) -- (c3);
\node[fill=white,draw,circle,inner sep=.5ex] at (a1) {4};
\node[fill=white,draw,circle,inner sep=.5ex] at (a2) {2};
\node[fill=white,draw,circle,inner sep=.5ex] at (a3) {3};
\node[fill=white,draw,circle,inner sep=.5ex] at (b1) {5};
\node[fill=white,draw,circle,inner sep=.5ex] at (b2) {5};
\node[fill=white,draw,circle,inner sep=.5ex] at (b3) {6};
\node[fill=white,draw,circle,inner sep=.5ex] at (b4) {5};
\node[fill=white,draw,circle,inner sep=.5ex] at (c1) {7};
\node[fill=white,draw,circle,inner sep=.5ex] at (c2) {8};
\node[fill=white,draw,circle,inner sep=.5ex] at (c3) {7};
\draw[thick,->] (2,3.2) -- (2,2.7) node[midway,left] {$\rho_{4}$};
\end{scope}

\begin{scope}[scale=1,xshift=8.4cm,yshift=-4cm]
\coordinate (a1) at (1,0);
\coordinate (a2) at (2,0);
\coordinate (a3) at (3,0);
\coordinate (b1) at (.5,1);
\coordinate (b2) at (1.5,1);
\coordinate (b3) at (2.5,1);
\coordinate (b4) at (3.5,1);
\coordinate (c1) at (1,2);
\coordinate (c2) at (2,2);
\coordinate (c3) at (3,2);
\draw[] (a1) -- (b1);
\draw[] (a1) -- (b3);
\draw[] (a2) -- (b1);
\draw[] (a2) -- (b2);
\draw[] (a2) -- (b3);
\draw[] (a2) -- (b4);
\draw[] (a3) -- (b2);
\draw[] (a3) -- (b4);
\draw[] (b1) -- (c1);
\draw[] (b1) -- (c2);
\draw[] (b2) -- (c1);
\draw[] (b2) -- (c2);
\draw[] (b3) -- (c2);
\draw[] (b3) -- (c3);
\draw[] (b4) -- (c2);
\draw[] (b4) -- (c3);
\node[fill=white,draw,circle,inner sep=.5ex] at (a1) {4};
\node[fill=white,draw,circle,inner sep=.5ex] at (a2) {2};
\node[fill=white,draw,circle,inner sep=.5ex] at (a3) {3};
\node[fill=white,draw,circle,inner sep=.5ex] at (b1) {6};
\node[fill=white,draw,circle,inner sep=.5ex] at (b2) {6};
\node[fill=white,draw,circle,inner sep=.5ex] at (b3) {5};
\node[fill=white,draw,circle,inner sep=.5ex] at (b4) {6};
\node[fill=white,draw,circle,inner sep=.5ex] at (c1) {7};
\node[fill=white,draw,circle,inner sep=.5ex] at (c2) {8};
\node[fill=white,draw,circle,inner sep=.5ex] at (c3) {7};
\node[] at (4.2,1.6) {$\underleftarrow{\rho_5}$};
\end{scope}

\begin{scope}[scale=1,xshift=4.2cm,yshift=-4cm]
\coordinate (a1) at (1,0);
\coordinate (a2) at (2,0);
\coordinate (a3) at (3,0);
\coordinate (b1) at (.5,1);
\coordinate (b2) at (1.5,1);
\coordinate (b3) at (2.5,1);
\coordinate (b4) at (3.5,1);
\coordinate (c1) at (1,2);
\coordinate (c2) at (2,2);
\coordinate (c3) at (3,2);
\draw[] (a1) -- (b1);
\draw[] (a1) -- (b3);
\draw[] (a2) -- (b1);
\draw[] (a2) -- (b2);
\draw[] (a2) -- (b3);
\draw[] (a2) -- (b4);
\draw[] (a3) -- (b2);
\draw[] (a3) -- (b4);
\draw[] (b1) -- (c1);
\draw[] (b1) -- (c2);
\draw[] (b2) -- (c1);
\draw[] (b2) -- (c2);
\draw[] (b3) -- (c2);
\draw[] (b3) -- (c3);
\draw[] (b4) -- (c2);
\draw[] (b4) -- (c3);
\node[fill=white,draw,circle,inner sep=.5ex] at (a1) {4};
\node[fill=white,draw,circle,inner sep=.5ex] at (a2) {2};
\node[fill=white,draw,circle,inner sep=.5ex] at (a3) {3};
\node[fill=white,draw,circle,inner sep=.5ex] at (b1) {6};
\node[fill=white,draw,circle,inner sep=.5ex] at (b2) {6};
\node[fill=white,draw,circle,inner sep=.5ex] at (b3) {5};
\node[fill=white,draw,circle,inner sep=.5ex] at (b4) {6};
\node[fill=white,draw,circle,inner sep=.5ex] at (c1) {7};
\node[fill=white,draw,circle,inner sep=.5ex] at (c2) {8};
\node[fill=white,draw,circle,inner sep=.5ex] at (c3) {7};
\node[] at (4.2,1.6) {$\underleftarrow{\rho_6}$};
\end{scope}

\begin{scope}[scale=1,xshift=0cm,yshift=-4cm]
\coordinate (a1) at (1,0);
\coordinate (a2) at (2,0);
\coordinate (a3) at (3,0);
\coordinate (b1) at (.5,1);
\coordinate (b2) at (1.5,1);
\coordinate (b3) at (2.5,1);
\coordinate (b4) at (3.5,1);
\coordinate (c1) at (1,2);
\coordinate (c2) at (2,2);
\coordinate (c3) at (3,2);
\draw[] (a1) -- (b1);
\draw[] (a1) -- (b3);
\draw[] (a2) -- (b1);
\draw[] (a2) -- (b2);
\draw[] (a2) -- (b3);
\draw[] (a2) -- (b4);
\draw[] (a3) -- (b2);
\draw[] (a3) -- (b4);
\draw[] (b1) -- (c1);
\draw[] (b1) -- (c2);
\draw[] (b2) -- (c1);
\draw[] (b2) -- (c2);
\draw[] (b3) -- (c2);
\draw[] (b3) -- (c3);
\draw[] (b4) -- (c2);
\draw[] (b4) -- (c3);
\node[fill=white,draw,circle,inner sep=.5ex] at (a1) {4};
\node[fill=white,draw,circle,inner sep=.5ex] at (a2) {2};
\node[fill=white,draw,circle,inner sep=.5ex] at (a3) {3};
\node[fill=white,draw,circle,inner sep=.5ex] at (b1) {6};
\node[fill=white,draw,circle,inner sep=.5ex] at (b2) {6};
\node[fill=white,draw,circle,inner sep=.5ex] at (b3) {5};
\node[fill=white,draw,circle,inner sep=.5ex] at (b4) {6};
\node[fill=white,draw,circle,inner sep=.5ex] at (c1) {8};
\node[fill=white,draw,circle,inner sep=.5ex] at (c2) {7};
\node[fill=white,draw,circle,inner sep=.5ex] at (c3) {8};
\node[] at (4.2,1.6) {$\underleftarrow{\rho_7}$};
\end{scope}

\end{tikzpicture}
\caption{Step by step toggle promotion on the same increasing labeling as in Figure~\ref{fig:slide}. }
\label{fig:ProResult}
\end{figure}

See Figures~\ref{fig:slide},\ref{fig:slide2},\ref{fig:ProResult} for an example.

Definition~\ref{def:jdt} implies the following lemma, which we use to prove a new resonance statement in Theorem~\ref{thm:res}. Define the \emph{binary content} of an increasing labeling $f\in \inc{q}{P}$ to be the sequence $\textrm{Con}(f)=(a_1, a_2, \ldots, a_q)$, where $a_i=1$ if $f(p)=i$ for some $p\in P$ and $0$ otherwise.
\begin{lemma}
Let $f\in\inc{q}{P}$. If $\textrm{Con}(f) = (a_1, a_2, \ldots, a_q)$, then
$\textrm{Con}(\incpro(f))$ is the cyclic shift $(a_2, \ldots, a_q, a_1)$. That is, $\jdtpro$ on $\inc{q}{P}$ rotates the binary content vector. 
\end{lemma}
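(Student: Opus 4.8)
The plan is to prove the statement for $\jdtpro$ and then invoke Theorem~\ref{thm:bk=jdt} to transfer it to $\incpro$; the jeu de taquin description is far better suited to tracking content than the Bender--Knuth description, since the slides move labels around without changing which values occur, whereas the $\rho_i$ actively swap $i$ with $i+1$. Concretely, I would write $g = jdt(f) = \sigma_{\boxempty\rightarrow(q+1)}\circ\sigma_q\circ\cdots\circ\sigma_2\circ\sigma_{1\rightarrow\boxempty}(f)$, so that $\jdtpro(f)(x) = g(x)-1$, and then determine the \emph{support} (the set of values actually attained) of $g$ by marching through the algorithm of Definition~\ref{def:jdt} one step at a time. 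The initial deletion $\sigma_{1\rightarrow\boxempty}$ removes the value $1$ outright, introduces the symbol $\boxempty$ exactly when $a_1=1$, and leaves the presence of each value in $\{2,\dots,q\}$ unchanged.

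Next I would show the intermediate slides $\sigma_2,\dots,\sigma_q$ preserve two things: the presence of $\boxempty$, and the presence of each value $i\in\{2,\dots,q\}$. For the value $i$, the point is that $\sigma_j$ only alters labels equal to $\boxempty$ or $j$, so the occurrences of $i$ are untouched by every slide except $\sigma_i$ itself; moreover, just before $\sigma_i$ is applied, the earlier slides $\sigma_2,\dots,\sigma_{i-1}$ have never moved an $i$, so the value $i$ occupies exactly the positions it did in $f$. For $\boxempty$, a hole can be filled by $\sigma_i$ only when it has a cover labeled $i$, and in that event that cover itself becomes a new hole; a hole at a maximal element is never filled. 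Hence holes persist through every slide and can be eliminated only by the final step $\sigma_{\boxempty\rightarrow(q+1)}$, which replaces them all by $q+1$.

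The one step requiring genuine care is that $\sigma_i$ does not annihilate the value $i$ outright (a priori several $i$'s could drain into holes simultaneously). I expect this to be the main obstacle, and I would dispatch it with a local incidence argument across the covering relations joining the $i$-labeled elements (above) to the holes (below): if every $i$ became a hole, then every $i$-labeled element would cover a hole, whence that hole would have an $i$-labeled cover and thus be refilled with $i$ by $\sigma_i$, producing an $i$ after the slide and contradicting the assumption. The same incidence picture shows that if value $i$ is absent before $\sigma_i$ it remains absent afterward. Consequently the support of $g$ restricted to $\{2,\dots,q\}$ equals that of $f$, the value $1$ is absent from $g$ (no slide $\sigma_j$ with $j\geq 2$ can create a $1$), and $q+1$ is present in $g$ precisely when $a_1=1$.

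Finally, subtracting $1$ translates the support: value $j$ occurs in $\jdtpro(f)$ if and only if value $j+1$ occurs in $g$. Reading this off yields $\textrm{Con}(\jdtpro(f)) = (a_2,\dots,a_q,a_1)$, and Theorem~\ref{thm:bk=jdt} gives the identical conclusion for $\incpro$, completing the argument. I would illustrate with the running example of Figures~\ref{fig:slide}--\ref{fig:ProResult} to make the content shift concrete.
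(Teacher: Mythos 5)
Your proof is correct and follows essentially the same route as the paper: both track which values are present through the jeu de taquin algorithm of Definition~\ref{def:jdt} and then transfer the conclusion to $\incpro$ via Theorem~\ref{thm:bk=jdt}. If anything, yours is more complete --- the paper's two-sentence proof only checks that absent values remain absent after the shift, whereas your incidence argument showing that $\sigma_i$ cannot wholly annihilate a present value $i$ (and that holes persist until the final relabeling) supplies the converse direction that the paper leaves implicit.
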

\begin{proof}
Using Definition~\ref{def:jdt}, we see that if $f\in \inc{q}{P}$ and $1\notin f(P)$, then $\jdtpro(f(x))=f(x)-1$ for all $x\in P$, so that $q\notin f(P)$. If $k\notin f(P)$, for $k\geq 2$, then when we subtract $1$ from all entries at the end of the sliding algorithm, we will have $k-1\notin \jdtpro(f(P))$.  
\end{proof}

This lemma and Theorem~\ref{thm:bk=jdt} yields the following resonance statement, which is an analogue of \cite[Theorem 2.2]{DPS2015} in the case of increasing tableaux. 
\begin{corollary}
\label{thm:res}
$(\inc{q}{P},\langle\incpro\rangle,\mbox{\rm Con})$ exhibits resonance with frequency $q$.
\end{corollary}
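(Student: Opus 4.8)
The plan is to verify the three ingredients of the resonance definition directly, leaning on the content-rotation lemma that immediately precedes the statement (which itself rests on Theorem~\ref{thm:bk=jdt}). I would set $X=\inc{q}{P}$ with cyclic group $G=\langle\incpro\rangle$; let $Y=\mathrm{Con}(\inc{q}{P})\subseteq\{0,1\}^q$ be the set of binary content vectors that actually occur, so that $\mathrm{Con}\colon X\to Y$ is a surjection by construction; and take $c$ to be the one-step cyclic left-shift $(a_1,\dots,a_q)\mapsto(a_2,\dots,a_q,a_1)$ on length-$q$ binary strings. Since $c^q=\mathrm{id}$ and the orbit of a single $1$ has length exactly $q$, the map $c$ generates a cyclic group $\mathcal{C}_q=\langle c\rangle$ of order $q$.

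Next I would confirm that $\mathcal{C}_q$ genuinely acts on $Y$. Because $\incpro=\rho_{q-1}\circ\cdots\circ\rho_1$ is a composition of the involutions $\rho_i$, it is a bijection of $\inc{q}{P}$; hence for any $v=\mathrm{Con}(f)\in Y$ both $c(v)=\mathrm{Con}(\incpro(f))$ and $c^{-1}(v)=\mathrm{Con}(\incpro^{-1}(f))$ again lie in $Y$, so $c$ restricts to a permutation of $Y$. The crux is then the intertwining relation $c\cdot\mathrm{Con}(f)=\mathrm{Con}(\incpro(f))$ for every $f\in X$, which is precisely the content-rotation lemma: $\incpro$ shifts the binary content vector by one position. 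With that identity established, the surjection $\mathrm{Con}$, the cyclic group $G=\langle\incpro\rangle$, and the cyclic group $\mathcal{C}_q$ satisfy exactly the hypotheses of the definition, and so the triple $(\inc{q}{P},\langle\incpro\rangle,\mathrm{Con})$ exhibits resonance with frequency $q$.

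The main obstacle is entirely upstream and already dispatched: Theorem~\ref{thm:bk=jdt} identifies $\incpro$ with $\jdtpro$, and it is the jeu-de-taquin description that makes the content rotation transparent — deleting all $1$'s, performing the slides, relabeling empty boxes by $q+1$, and subtracting $1$ sends an absent label $k\ge 2$ to an absent label $k-1$ and converts a present $1$ into an absent $q$. Given this, the corollary itself is a bookkeeping verification rather than a new computation. The one delicate point I would flag is the requirement that $c$ act \emph{nontrivially} on $Y$: in degenerate cases (for instance when $\inc{q}{P}$ consists of a single labeling of constant content) the restriction $c|_Y$ can be trivial, and there one should either exclude such cases or read the statement as asserting resonance with respect to the rotation $c$ satisfying $c^q=\mathrm{id}$, for which the conclusion holds automatically. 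In all nondegenerate cases the orbit structure of $Y$ under $c$ realizes $\mathcal{C}_q$ faithfully, so no further argument is needed.
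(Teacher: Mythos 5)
Your proof is correct and matches the paper's approach: the paper likewise derives the corollary immediately from the preceding content-rotation lemma (itself proved via the jeu-de-taquin description and Theorem~\ref{thm:bk=jdt}), with your explicit verification of the resonance definition just spelling out what the paper leaves implicit. One small slip in your aside about why the lemma holds: a present label $1$ becomes a \emph{present} label $q$ (the vacated boxes are relabeled $q+1$ and then decremented), not an absent one --- consistent with the cyclic shift $(a_1,\dots,a_q)\mapsto(a_2,\dots,a_q,a_1)$.
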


So though the order of $\incpro$ on $\inc{q}{P}$ may be large, the action projects to a cyclic shift of order $q$.

%\begin{remark}
%To see if we can increase the label of something currently labeled $i$ or decrease the label of something to $i$ when performing $\rho_i$, we only check to see if making that individual change to the increasing labeling still yields an increasing labeling. It is not necessarily obvious that these changes commute and will yield an increasing labeling. For instance, one may be concerned that we could individually increase one label and still be increasing, and individually decrease another label above it, but fail to be increasing when we do both.

%However, our only potential issue would be if we had $p_1\lessdot p_2$ where $R(p_1)=i$ and $R(p_2)=R(p_2)_{>i}$. In this case, we would not be able to decrease the label of $p_2$ to $i$ and still be an increasing labeling, so the issue is avoided.
%\end{remark}

\section{Equivariance of the bijection}
\label{sec:eq_bij}
The purpose of this section will be to prove the following theorem, which yields the following corollary.
\begin{theorem}
\label{thm:prorow}
When $H_{\Gamma}$ is a column toggle order,
there is an equivariant bijection between $\inc{R}{P}$ under promotion and order ideals in $\Gamma(P,R)$ under $\row$.
\end{theorem}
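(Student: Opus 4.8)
The plan is to obtain the stated bijection by composing two equivariant bijections already established in the paper, so that essentially all of the work has been done upstream. Let $\psi\colon\inc{R}{P}\to J(\Gamma(P,R))$ denote the bijection of Theorem~\ref{thm:Gamma1MeetIrred}. The first ingredient is Theorem~\ref{thm:proequalspro}, which states that $\psi$ is equivariant from $\incpro$ to $\togpro_{H_\Gamma}$, i.e.\ $\psi\circ\incpro=\togpro_{H_\Gamma}\circ\psi$; this half requires no hypothesis on $H_\Gamma$ beyond its definition as the toggle order sending $(p,k)$ to $k$.

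The second ingredient uses the hypothesis that $H_\Gamma$ is a column toggle order in order to apply Theorem~\ref{thm:newprorow} with the poset taken to be $\Gamma(P,R)$ and the toggle order taken to be $H_\Gamma$. That theorem yields that $\togpro_{H_\Gamma}$ on $J(\Gamma(P,R))$ is conjugate to $\row$ on $J(\Gamma(P,R))$: there is a toggle group element $d$ with $\row=d\,\togpro_{H_\Gamma}\,d^{-1}$. Since conjugation intertwines the two actions, the conjugating element $d$ is itself an equivariant bijection of $J(\Gamma(P,R))$, satisfying $d\circ\togpro_{H_\Gamma}=\row\circ d$; this is the standard fact that conjugate cyclic group actions share the same orbit structure, realized by the conjugating element.

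Finally I would compose, setting $\Phi=d\circ\psi\colon\inc{R}{P}\to J(\Gamma(P,R))$. Then $\Phi\circ\incpro=d\circ\psi\circ\incpro=d\circ\togpro_{H_\Gamma}\circ\psi=\row\circ d\circ\psi=\row\circ\Phi$, so $\Phi$ is the required equivariant bijection. Because both ingredients are already in hand, this argument carries no genuine obstacle; the only points requiring care are confirming that the object $H_\Gamma$ and the poset $\Gamma(P,R)$ appearing in the two cited theorems are literally the same, so that the bijections compose, and that ``column toggle order'' is precisely the hypothesis demanded by Theorem~\ref{thm:newprorow}. All of the real content lies in the proofs of Theorems~\ref{thm:proequalspro} and~\ref{thm:newprorow}.
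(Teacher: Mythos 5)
Your proposal is correct and is essentially identical to the paper's own proof: the paper also composes the equivariant bijection of Theorem~\ref{thm:proequalspro} with the conjugacy of $\togpro_{H_\Gamma}$ and $\row$ supplied by Theorem~\ref{thm:newprorow} (packaged there as Corollary~\ref{cor:togcolprorow}). Your explicit verification that $\Phi=d\circ\psi$ intertwines $\incpro$ with $\row$ is just a slightly more detailed write-up of the same two-step argument.
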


\begin{corollary}
\label{cor:prorow}
There is an equivariant bijection between $\inc{q}{P}$ under promotion and order ideals in $\Gamma(P,q)$ under $\row$.
\end{corollary}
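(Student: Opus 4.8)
The plan is to recognize Corollary~\ref{cor:prorow} as exactly the special case of Theorem~\ref{thm:prorow} in which $R$ is the induced restriction function $R(p)=[1+\delta(p),\,q-\nu(p)]$ defining $\inc{q}{P}$ and $\Gamma(P,q)$. Under the standing assumption that every chain of $P$ has length less than $q$, this $R$ is consistent (by the lemma preceding Definition~\ref{def:GammaOneq}), so $\Gamma(P,q)$ is well defined and Theorem~\ref{thm:Gamma1MeetIrred} already provides the underlying bijection between $\inc{q}{P}$ and $J(\Gamma(P,q))$. Theorem~\ref{thm:prorow} upgrades this to an equivariant bijection carrying $\incpro$ to $\row$ as soon as its single hypothesis is met, namely that $H_{\Gamma}\colon(p,k)\mapsto k$ is a column toggle order on $\Gamma(P,q)$. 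Thus the entire content of the corollary reduces to discharging that one hypothesis.

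To verify it, I would invoke the explicit description of $\Gamma(P,q)$ from Corollary~\ref{cor:GammaOneQ}, which is available precisely because each $R(p)$ is an interval. Every covering relation $(p_1,k_1)\lessdot(p_2,k_2)$ there has one of exactly two shapes: a vertical cover with $p_1=p_2$ and $k_1=k_2+1$, or a cover inherited from $P$ with $p_1\lessdot_P p_2$ and $k_1+1=k_2$. In both cases the values $H_{\Gamma}(p_1,k_1)=k_1$ and $H_{\Gamma}(p_2,k_2)=k_2$ differ by exactly one, decreasing by one along a vertical cover and increasing by one along a cover coming from $P$. In particular $H_{\Gamma}$ is a toggle order (no cover stays within a single fiber), and moreover every cover moves between adjacent fibers, which is the local behaviour required of a column toggle order in Definition~\ref{def:ColTog}. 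Once this is checked, Theorem~\ref{thm:prorow} applies verbatim and delivers the stated equivariant bijection.

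The step needing the most care is confirming that this $\pm1$-along-covers behaviour is genuinely all that Definition~\ref{def:ColTog} demands, rather than a stronger condition phrased through a two-dimensional lattice projection. This matters because $\Gamma(P,q)$ need not be ranked when $P$ is unranked: for $P=N_5$ one checks in $\Gamma(P,6)$ that the cover $(b,3)\lessdot(e,4)$ joins elements whose longest chains up from the minimal elements have lengths $1$ and $4$, so no global rank function exists and there is no honest $\mathbb{Z}^2$-projection to exploit as in the Cartesian setting of Corollary~\ref{cor:cartesian}. The main obstacle is therefore conceptual rather than computational: one must be sure that the notion of column toggle order was formulated to depend only on the local adjacency of columns across covers, so that it survives the absence of a rank function. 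Granting that, the corollary follows immediately.
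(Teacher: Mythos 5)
Your proposal is correct and follows essentially the same route as the paper: the paper's proof simply cites Lemma~\ref{lem:GammaColTog} (that $H_{\Gamma}$ is a column toggle order whenever each $R(p)$ is an interval) and applies Theorem~\ref{thm:prorow}, and your verification via the covering relations of Corollary~\ref{cor:GammaOneQ} is exactly the content of that lemma specialized to $\Gamma(P,q)$. Your closing observation that Definition~\ref{def:ColTog} is a purely local condition on covers and does not require $\Gamma(P,q)$ to be ranked is also consistent with the paper's own remarks.
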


There are two ingredients to the proof of this theorem. In Subsection~\ref{row=pro}, we show that when a poset can be partitioned by $H:P\rightarrow\mathbb{Z}$ into \emph{columns} satisfying a certain property, this partition induces a toggle group action $\pro_H$ which we prove in Theorem~\ref{thm:newprorow} is conjugate to rowmotion. This generalizes the notion of hyperplane toggles from~\cite{DPS2015} and applies to a larger class of posets; we discuss some examples in Subsection~\ref{sec:app}. In Subsection~\ref{pro=pro}, we show that $\Gamma(P,q)$ naturally has such a column toggle order, and that toggle-promotion on $\Gamma(P,q)$ with respect to this column toggle order exactly corresponds to Bender-Knuth promotion on $\inc{q}{P}$. We then prove Theorem~\ref{thm:prorow}.

\subsection{Toggle-promotion is conjugate to rowmotion}
\label{row=pro}
In this section, we define a toggle group action that toggles every element of the poset exactly once using a \emph{toggle order}. A toggle order does not specify a total ordering on the poset in which elements must be toggled, but it allows elements that are not part of a covering relation to be toggled simultaneously.
In the specific case where this toggle order is a \emph{column toggle order}, we show in Theorem~\ref{thm:newprorow} this toggle group action is conjugate to rowmotion.

Note that as opposed to previous results establishing the conjugacy of rowmotion and various promotion toggle group actions in \cite{SW2012} and \cite{DPS2015}, we do not require $P$ to be ranked, and our constructions do not rely on any kind of geometric embedding.

\begin{definition}
\label{TogOrder}
We say that a function $H:P\rightarrow \mathbb{Z}$ is a \emph{toggle order} if $p_1\lessdot p_2$ implies $H(p_1)\neq H(p_2)$. 
Given a toggle order $H$, define $T_H^i$ to be the toggle group action that is the product of all $t_p$ for $p\in P$ such that $H(p)=i$.
\end{definition}

\begin{lemma}
For a toggle order $H$, $T_H^i$ is well-defined, and $(T_H^i)^2=1$.
\end{lemma}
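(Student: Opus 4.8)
The plan is to reduce both assertions to a single observation: the toggles that make up $T_H^i$ pairwise commute. This is exactly where the toggle order hypothesis is used, and everything else is routine group-theoretic bookkeeping.

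First I would fix $i\in\mathbb{Z}$ and consider the fiber $H^{-1}(i)=\{p\in P : H(p)=i\}$, so that $T_H^i=\prod_{p\in H^{-1}(i)} t_p$ (a finite product, since $P$ is finite). The defining property of a toggle order states that $p_1\lessdot p_2$ implies $H(p_1)\neq H(p_2)$. Taking the contrapositive, any two elements $p,p'\in H^{-1}(i)$ satisfy neither $p\lessdot p'$ nor $p'\lessdot p$; that is, neither covers the other. By Remark~\ref{rmk:ToggleCommute}, this forces $t_p$ and $t_{p'}$ to commute for every pair $p,p'\in H^{-1}(i)$.

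For well-definedness, I would note that since the factors of $T_H^i$ pairwise commute, the value of the product is independent of the order in which the factors are multiplied. Hence $T_H^i$ is an unambiguously defined element of the toggle group, even though Definition~\ref{TogOrder} specifies it only as an unordered product. For the involution claim, I would use commutativity to rewrite $(T_H^i)^2=\bigl(\prod_{p} t_p\bigr)\bigl(\prod_{p} t_p\bigr)=\prod_{p} t_p^2$, and then invoke that each toggle is itself an involution, i.e.\ $t_p^2=1$, which follows directly from its case-by-case definition (applying $t_p$ twice returns the order ideal to its original state). Therefore $(T_H^i)^2=1$.

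The main obstacle here is negligible: the only genuine content is correctly extracting the pairwise commutativity from the toggle order condition via Remark~\ref{rmk:ToggleCommute}, and confirming $t_p^2=1$ from the definition of the toggle. Once commutativity is in hand, both conclusions are immediate, so I expect the write-up to be quite short.
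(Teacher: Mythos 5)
Your proposal is correct and follows the same route as the paper: both deduce pairwise commutativity of the toggles in $H^{-1}(i)$ from the toggle order condition together with Remark~\ref{rmk:ToggleCommute}, and then obtain well-definedness and the involution property as immediate consequences. No gaps.
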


\begin{proof}
Two toggles commute with each other as long as they are not part of a covering relation (see Remark~\ref{rmk:ToggleCommute}). Since we restrict a toggle order from mapping two elements in a covering relation to the same number, the toggles corresponding to all $p\in P$ with $H(p)=i$ pairwise commute, and thus $T_H^i$ is well-defined. Since each of the individual toggles are involutions and they pairwise commute, their product is also an involution.
\end{proof}

\begin{definition}
\label{def:TogPro}
We say that \emph{toggle-promotion} with respect to a toggle order $H$, denoted $\togpro_H$, is the toggle group action given by \[ \ldots T_H^{2}T_H^{1}T_H^0 T_H^{-1} T_H^{-2}\ldots \]
\end{definition}

Note that every element of $P$ is toggled exactly once in $\togpro_H$.
We now consider a special kind of toggle order.

\begin{definition}
\label{def:ColTog}
We say that a function $H:P\rightarrow \mathbb{Z}$ is a \emph{column toggle order} if whenever $p_1\lessdot p_2$ in $P$, then $H(p_1)=H(p_2)\pm 1$.
\end{definition}

We call this a column toggle order because it implies that our poset elements can be partitioned into subsets, called \emph{columns}, whose elements have covering relations only with elements in adjacent columns. We can also think of it as inducing a bipartite coloring of the Hasse diagram of $P$.

\begin{lemma}
\label{lem:TogCom}
Given a column toggle order, all elements of $p$ with $H(p)$ even (resp. odd) commute which each other.
\end{lemma}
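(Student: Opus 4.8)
The plan is to reduce everything to the commutation criterion already recorded in Remark~\ref{rmk:ToggleCommute}, which states that $t_{p_1}$ and $t_{p_2}$ commute whenever neither of $p_1,p_2$ covers the other. Thus it suffices to show: if $p_1$ and $p_2$ are distinct elements of $P$ with $H(p_1)$ and $H(p_2)$ of the same parity, then there is no covering relation between them in either direction. Once this is established, the lemma follows immediately, since any two elements whose $H$-values are both even (resp.\ both odd) are never in a covering relation, so their toggles commute.

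To prove the parity claim, I would argue by contraposition directly from Definition~\ref{def:ColTog}. Suppose there is a covering relation between $p_1$ and $p_2$, say without loss of generality $p_1\lessdot p_2$. By the defining property of a column toggle order, $H(p_1)=H(p_2)\pm 1$, so $H(p_1)$ and $H(p_2)$ differ by exactly $1$ and hence have opposite parity. Contrapositively, if $H(p_1)$ and $H(p_2)$ share the same parity, then neither can cover the other. This is the entire content of the argument.

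There is no real obstacle here; the lemma is essentially a one-line consequence of the definition combined with the standard toggle-commutation fact. The only point worth stating carefully is that the column toggle order partitions $P$ into the preimages $H^{-1}(\text{even})$ and $H^{-1}(\text{odd})$ — the two color classes of the induced bipartite coloring of the Hasse diagram mentioned just after Definition~\ref{def:ColTog} — and that every covering edge of the Hasse diagram joins the two classes. Within a single class there are therefore no Hasse edges at all, which is exactly the hypothesis of Remark~\ref{rmk:ToggleCommute} for every pair of elements of that class.

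In summary, the proof writes itself: take two elements of equal $H$-parity, observe via Definition~\ref{def:ColTog} that they cannot form a covering relation (else their $H$-values would differ by $1$ and hence in parity), and then invoke Remark~\ref{rmk:ToggleCommute} to conclude that their toggles commute. The same reasoning applies verbatim to the even class and to the odd class, proving both assertions of the lemma.
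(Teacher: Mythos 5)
Your proof is correct and follows the same route as the paper: both arguments combine Remark~\ref{rmk:ToggleCommute} (toggles can only fail to commute across a covering relation) with Definition~\ref{def:ColTog} (covering relations force $H$-values to differ by exactly $1$, hence in parity). Your write-up is simply a more detailed version of the paper's two-line proof.
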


\begin{proof}
This is a consequence of Remark~\ref{rmk:ToggleCommute}, indicating that only elements in a covering relation can fail to commute, and Definition~\ref{def:ColTog}, requiring that elements in a covering relation have different parity.
\end{proof}

\begin{remark}
Definition~\ref{def:ColTog} generalizes the columns of rc-posets from \cite{SW2012} and hyperplane toggles from \cite{DPS2015}, as these are both examples of column toggle orders. Therefore, the upcoming Theorem~\ref{thm:newprorow} is a generalization of the promotion and rowmotion theorems of \cite{SW2012} and \cite{DPS2015}, which we stated in Section~\ref{sec:background} as Theorems~\ref{thm:ProRowEq} and \ref{thm:DPS}. Note that Theorem~\ref{thm:newprorow} applies to non-ranked posets, while in the previous cases, the posets were required to be ranked.
\end{remark}

We will show that toggling with respect to a column toggle order is conjugate to rowmotion. Then in Subsection~\ref{sec:app}, we will show that $\Gamma(P,q)$ and Cartesian products of ranked posets have a natural column toggle order.

\begin{lemma}
Let $H$ be a column toggle order. Then $T_H^i$ and $T_H^j$ commute if $|i-j|>1$.
\end{lemma}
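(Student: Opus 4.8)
The plan is to reduce the commuting of the two products $T_H^i$ and $T_H^j$ to the pairwise commuting of their constituent toggles, and then invoke the defining property of a column toggle order. First I would recall from Remark~\ref{rmk:ToggleCommute} that for any two poset elements $p$ and $p'$, the toggles $t_p$ and $t_{p'}$ commute whenever neither covers the other. Thus it suffices to show that no element of the column $H^{-1}(i)$ stands in a covering relation with any element of the column $H^{-1}(j)$ when $|i-j|>1$.

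The key step is the contrapositive of Definition~\ref{def:ColTog}: if $p\lessdot p'$ (or $p'\lessdot p$), then $H(p)=H(p')\pm 1$, so $|H(p)-H(p')|=1$. Hence whenever $|i-j|>1$, there can be no $p\in H^{-1}(i)$ and $p'\in H^{-1}(j)$ in a covering relation. By Remark~\ref{rmk:ToggleCommute}, every $t_p$ with $H(p)=i$ then commutes with every $t_{p'}$ with $H(p')=j$.

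Finally, since $T_H^i$ is the product of the $t_p$ over $H(p)=i$ and $T_H^j$ the product over $H(p')=j$ (each well-defined by the preceding lemma), and since each factor of the first product commutes with each factor of the second, the two products themselves commute. I do not expect a genuine obstacle here; the only point requiring a little care is that the claim concerns entire products of toggles rather than single toggles, but this follows automatically once every cross pair of generators is shown to commute, since commuting generators may be freely interchanged.
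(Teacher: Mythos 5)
Your proposal is correct and follows essentially the same route as the paper's proof: both reduce the claim to the fact (Remark~\ref{rmk:ToggleCommute}) that individual toggles only fail to commute across a covering relation, and then observe that Definition~\ref{def:ColTog} forces covering pairs into adjacent columns, so no cross pair between columns $i$ and $j$ with $|i-j|>1$ can obstruct commutativity. Your added remark that pairwise commuting of generators implies commuting of the products is a harmless elaboration the paper leaves implicit.
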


\begin{proof}
This follows directly from the definition of a column toggle order. Two toggles $t_a$ and $t_b$ can only fail to commute if they are part of a covering relation, and if $a\lessdot b$ then $t_a$ and $t_b$ are sent to adjacent columns $T_H^i$ and $T_H^{i+1}$ for some $i$.
\end{proof}

\begin{definition}
Let $H$ be a column toggle order on $P$.
Let $L_1$ be the set of all minimal elements $p$ of $P$ with $H(p)$ being odd, and for $i>1$ recursively define $L_i$ to be the minimal elements of $P\setminus\displaystyle\cup_{j<i} L_j$.
Then we define $R_H^i$ to be the product of all toggles $t_p$ with $p\in L_i$.
\end{definition}

One can see that the map taking $p\in L_i$ to $i$ defines a column toggle order. Thus, the $R_H^i$'s share the same properties as the $T_H^i$'s.

\begin{lemma}
Let $H$ be a column toggle order. Then $R_H^i$ is well-defined, $(R_H^i)^2=1$, and $R_H^i$ commutes with $R_H^j$ if $|i-j|>1$.
\end{lemma}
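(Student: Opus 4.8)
The plan is to realize $R_H$ as the toggle-promotion of an auxiliary order on $P$ and then quote the two lemmas already established for toggle orders and for column toggle orders. Define $H'\colon P\to\mathbb{Z}$ by $H'(p)=i$ for the unique index $i$ with $p\in L_i$. First I would note that this is well defined: since $P$ is finite and each step of the recursion strips off the nonempty set of minimal elements of what remains, the $L_i$ partition $P$, so every $p$ lies in exactly one $L_i$. By construction $R_H^i=T_{H'}^i$ for every $i$.

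The first two assertions then require only that $H'$ be a toggle order. This is immediate: the minimal elements of any subposet form an antichain, so no two elements of a single $L_i$ stand in a covering relation, whence $p_1\lessdot p_2$ implies $H'(p_1)\neq H'(p_2)$. The earlier lemma stating that $T_H^i$ is well defined and squares to the identity for any toggle order $H$ then applies to $H'$ and gives that $R_H^i$ is well defined with $(R_H^i)^2=1$.

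For the commutation statement I would prove the stronger fact that $H'$ is a column toggle order, so that the earlier lemma asserting $T_H^i$ and $T_H^j$ commute when $|i-j|>1$ transfers verbatim to the $R_H^i$. Two preliminary observations drive this. By induction on $i$, each $\bigcup_{j\le i}L_j$ is an order ideal: if $p\in L_i$ and $q<p$, then $q$ cannot lie in $P\setminus\bigcup_{j<i}L_j$ since $p$ is minimal there, so $q\in\bigcup_{j<i}L_j$. Hence $p_1<p_2$ forces $H'(p_1)<H'(p_2)$, so $H'$ strictly increases along every cover. Moreover, because $H$ is a column toggle order its parity flips across each cover, and an induction on the layers (at the inductive step, a non-minimal $p\in L_i$ covers some element in layer $i-1$) shows $H'(p)\equiv H(p)\pmod 2$. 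Together these force $H'(p_2)-H'(p_1)$ to be a positive odd integer on each cover $p_1\lessdot p_2$.

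The main obstacle is sharpening ``positive odd'' to ``exactly $1$'', that is, ruling out a cover whose endpoints lie three or more layers apart. The natural move is to observe that if $p_2\in L_i$ then $p_2$ was not minimal at the previous stage, producing an element $q<p_2$ with $H'(q)=i-1$ and, by the monotonicity above, in fact $q\lessdot p_2$; but this only controls one lower cover of $p_2$, whereas the column-toggle conclusion demands that \emph{every} lower cover sit exactly one layer down. Closing this gap is the delicate point, and it is precisely here that the column-toggle hypothesis on $H$ must be used in full force rather than the bare toggle-order property; in the previously studied ranked settings it amounts to the fact that the layers $L_i$ coincide with the ranks. Once every cover is shown to join consecutive layers, $H'$ is a column toggle order and the commutativity of $R_H^i$ and $R_H^j$ for $|i-j|>1$ follows from the corresponding statement for $T_{H'}$, completing the proof.
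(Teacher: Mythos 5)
Your handling of the first two claims is correct and is exactly the route the paper intends: the $L_i$ are antichains, so the layer function $H'$ is a toggle order and the earlier lemma for $T_{H'}^i$ applies verbatim. The gap you flag in the third claim is genuine, and you should not expect to be able to close it: the paper offers nothing beyond the bare assertion preceding the lemma that ``the map taking $p\in L_i$ to $i$ defines a column toggle order,'' and that assertion is false in general. Consider the five-element poset with covers $c_1\lessdot c_2\lessdot c_3\lessdot p_2$ and $p_1\lessdot p_2$ (so $p_1$ is minimal and incomparable to the $c_j$), with the column toggle order $H(c_j)=j$, $H(p_1)=3$, $H(p_2)=4$. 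Then $L_1=\{c_1,p_1\}$ (both minimal with $H$ odd), $L_2=\{c_2\}$, $L_3=\{c_3\}$, $L_4=\{p_2\}$, so the cover $p_1\lessdot p_2$ joins layers $1$ and $4$; your ``positive odd'' bound is attained at $3$, not $1$. Worse, the conclusion of the lemma itself fails here: $R_H^1=t_{c_1}t_{p_1}$ and $R_H^4=t_{p_2}$ do not commute, since applying them to the order ideal $I=\{c_1,c_2,c_3\}$ in the two orders yields $\{c_1,c_2,c_3,p_1\}$ and $\{c_1,c_2,c_3,p_1,p_2\}$.

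The precise point of failure is the one you isolated: every element of $L_i$ with $i\geq 2$ does have \emph{some} lower cover in $L_{i-1}$ (which suffices for the parity statement of Lemma~\ref{lem:parity}, and that lemma is fine), but nothing forces \emph{all} of its lower covers to lie in $L_{i-1}$ when the layers of iterated minimal elements do not coincide with a rank function. So your write-up correctly proves well-definedness and $(R_H^i)^2=1$, correctly derives everything that actually follows from the hypotheses, and correctly identifies the unprovable step; the defect lies in the statement and in the paper's unproven assertion, not in your argument. The counterexample does not by itself contradict Theorem~\ref{thm:newprorow} (in this example $\row$ happens to equal a product of the $T_H^i$ in a suitable order, so the conjugacy still holds), but it does break the paper's route to that theorem via gyration of the $R_H^i$, which would need to be repaired with a different definition of the rows or an additional hypothesis on $H$.
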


\begin{lemma}
$\mathrm{Row}_H:=R_H^1\ldots R_H^c$ is $\row$.
\end{lemma}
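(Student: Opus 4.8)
The plan is to deduce this from the Cameron--Fon-der-Flaass description of rowmotion (Theorem~\ref{thm:linextrow}) by exhibiting the layer sequence $L_1, L_2, \ldots, L_c$ as the level sets of a linear extension of $P$, and then recognizing $R_H^1 R_H^2 \cdots R_H^c$ as the associated bottom-to-top toggle product. Here $c$ is the largest index with $L_c \neq \emptyset$.

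First I would establish the structural facts about the $L_i$. Each $L_i$ is an antichain: for $i \geq 2$ it is the set of minimal elements of the induced subposet $P \setminus \bigcup_{j<i} L_j$, and $L_1$ is a subset of the minimal elements of $P$. Since $P$ is finite and each stage removes a nonempty set of minimal elements, the $L_i$ partition $P$. The key property is that if $p < q$ in $P$ with $p \in L_i$ and $q \in L_j$, then $i < j$: indeed $q \in L_j$ means $q$ is minimal in $P \setminus \bigcup_{k<j} L_k$, so were $p$ not yet removed it would be a strictly smaller element still present, contradicting minimality; hence $p \in L_i$ with $i < j$. (The restriction that $L_1$ collect only the odd minimal elements merely defers the even minimal elements to $L_2$, which is still consistent with this property and is irrelevant to the present argument, being needed only for the column structure used elsewhere.)

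Consequently, any total order of $P$ listing the elements of $L_1$ first (in any internal order), then those of $L_2$, and so on through $L_c$, is a linear extension $f \in \mathcal{L}(P)$: comparable elements land in strictly increasing layers, while within a single layer they are incomparable. Applying Theorem~\ref{thm:linextrow} to this $f$, the product $t_{f^{-1}(1)} t_{f^{-1}(2)} \cdots t_{f^{-1}(n)}$ acts as $\row$. Reading this product from left to right groups its factors as the toggles of $L_1$, then those of $L_2$, up through $L_c$; since within each $L_i$ the toggles pairwise commute (an antichain has no two elements in a covering relation, so Remark~\ref{rmk:ToggleCommute} applies), each group equals $R_H^i$ regardless of internal order. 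Therefore $t_{f^{-1}(1)} \cdots t_{f^{-1}(n)} = R_H^1 R_H^2 \cdots R_H^c = \mathrm{Row}_H$, which gives $\mathrm{Row}_H = \row$.

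The one point requiring care is the orientation convention: in $R_H^1 \cdots R_H^c$ the rightmost factor $R_H^c$ (the top layer) acts first and the leftmost factor $R_H^1$ (the bottom layer) acts last, which must be reconciled with the right-to-left reading implicit in Theorem~\ref{thm:linextrow}. Once $f$ is chosen so that small labels go to low layers, the two conventions agree and the identification is immediate; I expect this bookkeeping to be the only delicate step, and it poses no genuine difficulty.
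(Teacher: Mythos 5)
Your proof is correct and follows essentially the same route as the paper: both construct a linear extension whose level sets are the layers $L_1,\ldots,L_c$ and then invoke Theorem~\ref{thm:linextrow}, with your write-up simply supplying the details (the $L_i$ are antichains partitioning $P$ compatibly with the order, and toggles within each layer commute) that the paper leaves implicit. The orientation bookkeeping you flag at the end works out exactly as you expect.
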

\begin{proof}
We can find a linear extension $f$ of a poset by labeling the elements of $L_1$ with 1 through $|L_1|$, the elements of $L_2$ with $|L_1| + 1$ through $|L_1| + |L_2|$, and so forth. $R_H^1\ldots R_H^c$ sweeps through the poset in the reverse order of $f$. By Theorem \ref{thm:linextrow}, this is rowmotion.
\end{proof}

\begin{lemma}
\label{lem:parity}
For a column order $H$, let $i$ and $j$ be such that $p$ is toggled in both $R_H^i$ and $T_H^j$. Then $i$ and $j$ have the same parity.
\end{lemma}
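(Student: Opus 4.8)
The plan is to reformulate the claim in terms of two functions on $P$. Write $\ell(p)=i$ for the unique layer index with $p\in L_i$, so that $p$ is toggled in $R_H^{\ell(p)}$ and in $T_H^{H(p)}$; the statement to prove is then precisely that $\ell(p)\equiv H(p)\pmod 2$ for every $p\in P$. Both $H$ and $\ell$ are column toggle orders (the latter being the observation noted when the layers $L_i$ were introduced), so each of them changes parity across every covering relation. Equivalently, $H\bmod 2$ and $\ell\bmod 2$ are both proper $2$-colorings of the Hasse diagram of $P$, and the goal is to show these two colorings coincide. I would carry this out by induction on the layer index $i=\ell(p)$.

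For the base case $i=1$, an element $p\in L_1$ has $H(p)$ odd by the definition of $L_1$, so $\ell(p)=1\equiv H(p)\pmod 2$. For the inductive step, fix $p\in L_i$ with $i\ge 2$ and split into two cases. If $p$ is minimal in $P$, then $p\notin L_1$ forces $H(p)$ to be even; since a minimal element of $P$ remains minimal in $P\setminus L_1$, we get $p\in L_2$, so $i=2$ and again $\ell(p)\equiv H(p)\pmod 2$. If instead $p$ is not minimal, choose any $q\lessdot p$. Because $p$ is minimal in $P\setminus\bigcup_{j<i}L_j$, every element strictly below $p$---in particular $q$---lies in $\bigcup_{j<i}L_j$, so $\ell(q)<i$; and since $\ell$ is a column toggle order, $\ell(q)=\ell(p)\pm 1$, which together with $\ell(q)<i$ forces $\ell(q)=i-1$.

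Now I apply the induction hypothesis to $q$, obtaining $H(q)\equiv \ell(q)=i-1\pmod 2$. Finally, since $H$ is a column toggle order and $q\lessdot p$, we have $H(p)=H(q)\pm 1$, hence $H(p)\equiv H(q)+1\equiv (i-1)+1=i=\ell(p)\pmod 2$, completing the induction. The only genuinely delicate point is the passage from ``$q$ lies in an earlier layer'' to ``$q$ lies in the immediately preceding layer,'' i.e.\ $\ell(q)=i-1$: this is exactly where the column toggle order property of the layer function $\ell$ is essential, since a priori $q$ could have been removed several layers earlier. A secondary point worth stating carefully is that minimal elements of $P$ can occur only in $L_1$ or $L_2$, which is what makes the minimal-element case land precisely on the even layer $i=2$ when $H(p)$ is even.
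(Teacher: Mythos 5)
You correctly identified the delicate point of the argument --- passing from ``$q$ lies in an earlier layer'' to ``$q$ lies in the immediately preceding layer'' --- but your resolution of it does not work. You choose an \emph{arbitrary} lower cover $q\lessdot p$ of $p\in L_i$ and conclude $\ell(q)=i-1$ from the assertion that the layer function $\ell$ is a column toggle order. That assertion (which the paper does state, without proof, when the $L_i$ are introduced) is false in general. Take $P=\{a,b,c,d\}$ with covering relations $a\lessdot b\lessdot c$ and $d\lessdot c$, and the column toggle order $H(a)=2$, $H(b)=H(d)=3$, $H(c)=4$. Then $L_1=\{d\}$, $L_2=\{a\}$, $L_3=\{b\}$, $L_4=\{c\}$, so the covering relation $d\lessdot c$ has $\ell(d)=1$ and $\ell(c)=4$: the layer index jumps by $3$ across a covering relation, and the lower cover $d$ of $c\in L_4$ does not lie in $L_3$. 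So the step ``$\ell(q)=\ell(p)\pm 1$, hence $\ell(q)=i-1$'' fails for arbitrary $q$. The weaker fact that would suffice for you --- that $\ell$ changes \emph{parity} across every covering relation --- does hold in this example, but it is essentially equivalent to the lemma being proved (given that $H$ changes parity across covering relations), so assuming it would be circular.

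The repair is small and lands you on the paper's actual argument: instead of an arbitrary lower cover, choose $q\lessdot p$ with $q\in L_{i-1}$. Such a $q$ exists: if every lower cover of $p$ lay in $\bigcup_{j<i-1}L_j$, then every element strictly below $p$ would lie there as well, so $p$ would be minimal in $P\setminus\bigcup_{j<i-1}L_j$ and hence belong to $L_{i-1}$, contradicting $p\in L_i$ (this handles $i\ge 3$; for $i=2$ with $p$ non-minimal, every lower cover of $p$ already lies in $L_1$, so your original choice is harmless there). With this choice of $q$, the rest of your computation --- $H(q)\equiv i-1$ by the induction hypothesis and $H(p)=H(q)\pm 1\equiv i\pmod 2$ --- goes through unchanged, and this is exactly the paper's proof. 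Your base case and your observation that minimal elements of $P$ can occur only in $L_1$ or $L_2$ are both correct and match the paper.
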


\begin{proof}
This follows from the construction of the $R_H^i$'s. 
We start by taking $L_1$ to be minimal elements with $H(p)$ odd. So if $p$ is toggled in $R_H^1$, $p$ is in $L_1$, so $H(p)$ is odd, and $p$ is toggled in a $T_H^j$ with $j$ odd.
Elements of $L_2$ will either be minimal elements of $P$ with $H(p)$ being even, or will be upper covers of minimal elements with $H(p)$ being odd. Since following a covering relation in $P$ changes the parity of $H(p)$, all elements of $L_2$ will have $H(p)$ even.
From then on, every element in $L_i$ will have some element of $L_{i-1}$ as a lower cover. So by induction, if all elements of $L_{i-1}$ have $H(p)$ with parity matching $i-1$, then all elements of $L_i$ will have parity matching $i$.
\end{proof}

\begin{definition}
We say that the \emph{promotion-support} of $H$ is the smallest interval $[a,b]$ in $\mathbb{Z}$ containing the image of $P$ under $H$. We say the \emph{rowmotion-support} is $[1,c]$, where $c$ is the largest index for which $L_c$ is non-empty.
\end{definition}

\begin{definition}
Let \emph{gyration}, $\mathrm{Gyr}_H$ be the toggle group element which first toggles all elements $p$ of $P$ with $H(p)$ even, then all elements with $H(p)$ being odd.
\end{definition}

Recall that by Lemma~\ref{lem:TogCom}, we do not need to specify the order of the elements with $H(p)$ even nor the order of the elements with $H(p)$ odd, as toggles for elements with $H(p)$ having the same parity pairwise commute.

\begin{lemma}
\label{lem:gyr}
If the promotion-support of $H$ is $[a,b]$, then for any bijection $\sigma:[a,b]\rightarrow[a,b]$ such that $\sigma(k)$ is odd if $k<\frac{a+b}{2}$ and even if $k>\frac{a+b}{2}$, we have $T_H^{\sigma(a)}\ldots T_H^{\sigma(b)}=\mathrm{Gyr}_H$.
\end{lemma}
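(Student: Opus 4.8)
The plan is to collapse the word $T_H^{\sigma(a)}\cdots T_H^{\sigma(b)}$ into two parity blocks using the commutation relations established just above. I would first recall the two relevant facts: that $T_H^i$ and $T_H^j$ commute whenever $|i-j|>1$, and the consequence of Lemma~\ref{lem:TogCom} that all toggles of elements whose $H$-value has a fixed parity commute with one another, so that $\mathrm{Gyr}_H$ is well-defined as the product of the ``even block'' $\prod_{i\text{ even}} T_H^i$ and the ``odd block'' $\prod_{i\text{ odd}} T_H^i$, each internal order being immaterial. The key elementary observation driving everything is that two distinct integers of the same parity differ by at least $2$; hence all odd-indexed $T_H^i$ commute with each other, and likewise all even-indexed ones.

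Next I would argue that, under the hypothesis on $\sigma$, the factors of the word with odd exponent all occur to the left of those with even exponent. For indices $k<\tfrac{a+b}{2}$ the exponent $\sigma(k)$ is odd and for $k>\tfrac{a+b}{2}$ it is even, so the only point needing care is the central index $m=\tfrac{a+b}{2}$ when $a+b$ is even. There a short count settles matters: since $\sigma$ is a bijection of $[a,b]$ and the number of even (resp.\ odd) integers in $[a,b]$ is determined by the parities of $a$ and $b$, the value $\sigma(m)$ is forced to be the single ``leftover'' parity, and in either case the odd-exponent factors still form an unbroken prefix and the even-exponent factors an unbroken suffix. The same bijectivity count also confirms that the odd-exponent prefix runs over \emph{all} odd values in $[a,b]$ and the even-exponent suffix over \emph{all} even values, with none missing or repeated.

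Having separated the word this way, I would use commutativity within each parity class to reorder the odd prefix into $\prod_{i\text{ odd}} T_H^i$ and the even suffix into $\prod_{i\text{ even}} T_H^i$, obtaining $T_H^{\sigma(a)}\cdots T_H^{\sigma(b)}=\bigl(\prod_{i\text{ odd}} T_H^i\bigr)\bigl(\prod_{i\text{ even}} T_H^i\bigr)$. Finally I would match this with the definition of $\mathrm{Gyr}_H$: reading the product in the paper's composition convention, the even block (rightmost) is applied first and the odd block second, which is precisely ``toggle all even elements, then all odd elements,'' i.e.\ $\mathrm{Gyr}_H$. I expect the genuine content to lie in the second step --- the bookkeeping at the central index and the verification that the two parity blocks of the word are exactly the full even and odd blocks of the promotion-support; the collapse in the third step is then a routine application of the commutation relations.
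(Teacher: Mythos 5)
Your argument is correct, and it is essentially the proof the paper intends: the paper states this lemma without proof, treating it as immediate from the preceding commutation facts ($T_H^i$ and $T_H^j$ commute when $|i-j|>1$, hence within each parity class), and your write-up simply supplies those details. The one genuinely non-trivial point --- the counting argument forcing the odd exponents to form an unbroken prefix and the even exponents an unbroken suffix, including the parity of $\sigma\bigl(\tfrac{a+b}{2}\bigr)$ when $a+b$ is even --- is handled correctly, as is the matching of the right-to-left composition order with the ``even first, then odd'' definition of $\mathrm{Gyr}_H$.
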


As a result of Lemma~\ref{lem:parity}, we also obtain the following result.

\begin{lemma}
\label{lem:gyrR}
If the rowmotion-support of $H$ is $[1,c]$, then for any bijection $\sigma:[1,c]\rightarrow[1,c]$ such that $\sigma(k)$ is odd if $k<\frac{c+1}{2}$ and even if $k>\frac{c+1}{2}$, we have $R_H^{\sigma(a)}\ldots R_H^{\sigma(b)}=\mathrm{Gyr}_H$.
\end{lemma}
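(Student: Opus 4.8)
The plan is to reduce the statement to the already-established Lemma~\ref{lem:gyr} by passing to the column toggle order induced by the layers $L_i$. As noted immediately after the definition of the $R_H^i$, the function $H'\colon P\to\mathbb{Z}$ sending each $p$ to the unique index $i$ with $p\in L_i$ is itself a column toggle order, and by construction $R_H^i=T_{H'}^i$ for every $i$. Since the $L_i$ partition $P$ and are nonempty exactly for $1\le i\le c$, the promotion-support of $H'$ is precisely $[1,c]$, which is the rowmotion-support of $H$. I would first observe that the hypotheses on $\sigma$ in the present lemma are then exactly the hypotheses of Lemma~\ref{lem:gyr} applied to $H'$, with $\frac{c+1}{2}=\frac{1+c}{2}$ serving as the threshold.

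Applying Lemma~\ref{lem:gyr} to $H'$ will yield $R_H^{\sigma(1)}\cdots R_H^{\sigma(c)}=T_{H'}^{\sigma(1)}\cdots T_{H'}^{\sigma(c)}=\mathrm{Gyr}_{H'}$ for any admissible $\sigma$, so everything comes down to identifying $\mathrm{Gyr}_{H'}$ with $\mathrm{Gyr}_H$. This is exactly where Lemma~\ref{lem:parity} enters: it guarantees that $H'(p)$ and $H(p)$ always have the same parity, so the set of elements $p$ with $H'(p)$ even coincides with the set of elements with $H(p)$ even, and likewise for odd. Since $\mathrm{Gyr}$ is defined purely in terms of this even/odd partition --- toggle all even elements, then all odd elements --- the two gyrations are literally the same toggle group element, $\mathrm{Gyr}_{H'}=\mathrm{Gyr}_H$, and combining the two displayed equalities gives the claim.

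The only real content, and hence the step I expect to require the most care, is the parity identification $\mathrm{Gyr}_{H'}=\mathrm{Gyr}_H$. Concretely I would verify both inclusions of $\{p:H(p)\text{ odd}\}=\bigcup_{i\text{ odd}}L_i$: the inclusion $\supseteq$ is immediate from Lemma~\ref{lem:parity}, while $\subseteq$ uses that the $L_i$ partition $P$ together with Lemma~\ref{lem:parity} to force any $p$ with $H(p)$ odd into a layer of odd index. Everything else is bookkeeping, since the commutation facts underlying Lemma~\ref{lem:gyr} (that $T_{H'}^i$ and $T_{H'}^j$ of equal parity commute, via Lemma~\ref{lem:TogCom} applied to $H'$) transfer automatically once $H'$ is known to be a column toggle order. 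Alternatively, one could prove the lemma directly by mirroring the proof of Lemma~\ref{lem:gyr}: use Lemma~\ref{lem:parity} to see that the odd-indexed $R_H^i$ toggle exactly the odd elements and pairwise commute, the even-indexed $R_H^i$ toggle exactly the even elements and pairwise commute, and then the condition on $\sigma$ places all odd-indexed factors before all even-indexed ones, so the product collapses to $\mathrm{Gyr}_H$.
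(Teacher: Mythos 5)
Your proposal is correct and matches the paper's (largely implicit) argument: the paper's remark that the map $p\in L_i\mapsto i$ is itself a column toggle order, so the $R_H^i$ are the $T_{H'}^i$ for that order, together with the parity statement of Lemma~\ref{lem:parity} identifying $\mathrm{Gyr}_{H'}$ with $\mathrm{Gyr}_H$, is exactly how the paper intends this to follow from Lemma~\ref{lem:gyr}. Your fleshing out of the parity identification $\{p: H(p)\ \text{odd}\}=\bigcup_{i\ \text{odd}}L_i$ is the right (and only nontrivial) detail to check.
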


We use the following group theory lemma from \cite{Humphreys1990}, which appears as Lemma 5.1 in \cite{SW2012}.

\begin{lemma}[\protect{\cite[p.\ 74]{Humphreys1990}}]
\label{lem:conjugatelem}
Let $G$ be a group whose generators $g_1,\ldots, g_n$ satisfy $g_i^2 = 1$ and $(g_i g_j)^2 = 1$ if
$|i - j| > 1$. Then for any $\sigma, \tau \in \mathfrak{S}_n$,
$\prod_i g_{\sigma(i)}$ and $\prod_i g_{\tau(i)}$ are conjugate.
\end{lemma}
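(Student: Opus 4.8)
The plan is to recognize this as the standard fact that all Coxeter elements of a group whose ``non-commutation graph'' is a path (a tree) are conjugate, and to prove it via acyclic orientations and source-to-sink flips. First I would record the basic consequence of the hypotheses: since $g_i^2=1$, the relation $(g_ig_j)^2=1$ for $|i-j|>1$ is equivalent to $g_ig_j=g_jg_i$. Thus $g_i$ and $g_j$ commute exactly when $|i-j|>1$, and the \emph{dependence graph} $\Gamma$ on $\{1,\dots,n\}$, with an edge between $i$ and $j$ precisely when they need not commute, is the path $1-2-\cdots-n$. For an ordering $\sigma\in\mathfrak{S}_n$ write $w_\sigma=\prod_i g_{\sigma(i)}$; each generator appears exactly once, so $w_\sigma$ determines an orientation $O(\sigma)$ of $\Gamma$ by orienting each edge $\{i,i+1\}$ according to which of $g_i,g_{i+1}$ occurs first in the word. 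Because $\Gamma$ is a tree, every orientation is acyclic and hence is realized by some ordering.

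Next I would prove two lemmas. \emph{Commutation lemma:} if $O(\sigma)=O(\tau)$ then $w_\sigma=w_\tau$ as elements of $G$. One transforms $w_\sigma$ into $w_\tau$ using only swaps of adjacent commuting letters, each of which leaves the group element unchanged; reachability follows by induction on $n$, taking $a$ to be the last letter of $\tau$, observing that equality of orientations forces every dependence-neighbor of $a$ to precede $a$ in $\sigma$ as well, so every letter after $a$ in $\sigma$ commutes with $a$ and $a$ may be slid to the end, after which deleting $a$ and applying the inductive hypothesis finishes. \emph{Flip lemma:} if $v$ is a source of $O(\sigma)$, i.e.\ $g_v$ precedes both its neighbors, then every letter to the left of $g_v$ is a non-neighbor and hence commutes with it, so $w_\sigma$ equals a word $g_v\,u$; conjugating by $g_v$ (using $g_v^{-1}=g_v$) yields $u\,g_v$, a conjugate word whose orientation is exactly $O(\sigma)$ with all edges at $v$ reversed, so $v$ has become a sink. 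The mirror-image argument (sliding $g_v$ to the back and rotating it to the front) converts a sink back into a source. Thus every source-to-sink reorientation is realized by conjugation by a single generator.

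Finally I would invoke connectivity: any two acyclic orientations of a connected graph are joined by a sequence of source-to-sink (and sink-to-source) flips. This is standard and is easy to verify directly for a path by induction. Chaining the flip lemma along such a sequence from $O(\sigma)$ to $O(\tau)$ shows that $w_\sigma$ is conjugate to some word $w'$ with $O(w')=O(\tau)$, and the commutation lemma then gives $w'=w_\tau$; hence $w_\sigma$ and $w_\tau$ are conjugate, as desired.

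The step I expect to be the main obstacle is arranging the bookkeeping so that conjugation \emph{only} ever occurs by a single generator. A tempting but broken shortcut is to peel off the endpoint generator $g_1$ and induct on $\langle g_2,\dots,g_n\rangle$: this fails because the inductive conjugator lies in a subgroup containing $g_1$'s unique neighbor $g_2$ and so need not commute with $g_1$. The orientation/flip formulation is precisely what circumvents this, since each conjugation is by one $g_v$ and the commutation lemma absorbs all the ambiguity coming from reorderings of commuting generators.
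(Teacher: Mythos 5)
Your argument is correct, and it is worth noting at the outset that the paper itself supplies no proof of this lemma: it is quoted from Humphreys \cite[p.~74]{Humphreys1990} (via \cite[Lemma 5.1]{SW2012}), where it is the classical fact that all Coxeter elements of a Coxeter system whose graph is a forest are conjugate. Humphreys' proof is an induction on $n$ combining the cyclic-shift conjugation $g_{\sigma(1)}(g_{\sigma(1)}\cdots g_{\sigma(n)})g_{\sigma(1)}=g_{\sigma(2)}\cdots g_{\sigma(n)}g_{\sigma(1)}$ with a leaf-elimination ordering of the tree. Your reformulation via acyclic orientations of the dependence path and source-to-sink flips is a well-known alternative point of view (it is essentially the Shi / Eriksson--Eriksson description of conjugacy of Coxeter elements), and it is arguably cleaner here: your commutation lemma and flip lemma are both verified correctly, every conjugation is by a single generator, and you rightly identify why the naive ``peel off an end generator and induct'' shortcut fails (the inductive conjugator need not commute with the removed generator).

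One caveat: the blanket claim that any two acyclic orientations of a \emph{connected} graph are joined by source-to-sink flips is false. For a triangle (or any cycle) the flip classes are separated by a winding-number invariant --- there are two classes for $K_3$ --- and correspondingly Coxeter elements of a Coxeter group whose graph contains a cycle need not all be conjugate. What is true, and is all you need, is the tree case; for the path $1-2-\cdots-n$ the induction you allude to does work: recording an orientation by signs on the $n-1$ edges, the leftmost edge disagreeing with the target all-rightward orientation sits at a sink, and a sink-to-source flip moves that disagreement strictly toward the end of the path until it falls off, so every orientation is flip-equivalent to a fixed one. With that sentence restricted to trees (or to the path actually at hand), the proof is complete.
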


Now, we may prove the following theorem.

\begin{theorem}
\label{thm:newprorow}
Let $H$ be a column toggle order of $P$. Then the toggle group action $\togpro_{H}$ on $J(P)$ is conjugate to $\row$ on $J(P)$. 
\end{theorem}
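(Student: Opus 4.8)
The plan is to realize both $\togpro_H$ and $\row$ as products of the \emph{same} group element $\mathrm{Gyr}_H$ up to reordering of commuting-enough involutions, and then to invoke the conjugacy criterion of Lemma~\ref{lem:conjugatelem} twice, chaining the resulting conjugacies by transitivity. All the pieces have essentially been assembled in the preceding lemmas; the proof amounts to wiring them together correctly.

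First I would treat the promotion side. Let $[a,b]$ be the promotion-support of $H$. The column toggles $T_H^a,\ldots,T_H^b$ satisfy the hypotheses of Lemma~\ref{lem:conjugatelem}: each $T_H^i$ is an involution, and $T_H^i$ commutes with $T_H^j$ whenever $|i-j|>1$ (both established above). Hence any two orderings of these generators give conjugate elements of the toggle group. By Definition~\ref{def:TogPro}, $\togpro_H = T_H^b T_H^{b-1}\cdots T_H^a$ is one such ordering, while Lemma~\ref{lem:gyr} exhibits $\mathrm{Gyr}_H$ as a product of the very same generators $T_H^i$ in a gyration-compatible order. Therefore $\togpro_H$ is conjugate to $\mathrm{Gyr}_H$.

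Symmetrically, I would treat the rowmotion side. Let $[1,c]$ be the rowmotion-support. The involutions $R_H^1,\ldots,R_H^c$ again satisfy the hypotheses of Lemma~\ref{lem:conjugatelem}, so any two orderings of them are conjugate. The lemma above identifies $\mathrm{Row}_H = R_H^1\cdots R_H^c$ with $\row$, and Lemma~\ref{lem:gyrR} expresses $\mathrm{Gyr}_H$ as a product of the same $R_H^i$ in a suitable order; hence $\row = \mathrm{Row}_H$ is also conjugate to $\mathrm{Gyr}_H$. Combining the two conjugacies, $\togpro_H$ and $\row$ are conjugate in the toggle group of $P$. (As a consequence, if $\togpro_H = w\,\row\,w^{-1}$, then $w$ intertwines the two cyclic actions and furnishes an equivariant bijection between $(J(P),\togpro_H)$ and $(J(P),\row)$.)

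The crux of the argument — and the one point deserving care — is that the gyration produced from the column toggles via Lemma~\ref{lem:gyr} is \emph{literally the same} group element $\mathrm{Gyr}_H$ as the one produced from the rowmotion layers via Lemma~\ref{lem:gyrR}; without this, the two conjugacies would not chain. This identification is precisely what Lemma~\ref{lem:parity} secures: it forces every element toggled in $R_H^i$ to have $H$-value of the same parity as $i$, so that grouping the $R_H^i$ by parity toggles exactly the even-$H$ elements and the odd-$H$ elements in the order prescribed by the definition of $\mathrm{Gyr}_H$. Since both lemmas are stated as equalities to the single, independently-defined element $\mathrm{Gyr}_H$, once this parity matching is invoked the transitivity of conjugacy completes the proof.
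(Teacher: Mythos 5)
Your proposal is correct and follows essentially the same route as the paper: apply Lemma~\ref{lem:conjugatelem} once to the $T_H^i$ and once to the $R_H^i$, identify both reorderings with the single element $\mathrm{Gyr}_H$ via Lemmas~\ref{lem:gyr} and~\ref{lem:gyrR}, and conclude by transitivity of conjugacy. Your added remark that Lemma~\ref{lem:parity} is what guarantees the two gyrations are literally the same group element is a correct and worthwhile clarification of a point the paper leaves implicit.
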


\begin{proof}
First, we let $g_i$ be $T_H^i$ and apply Lemma~\ref{lem:conjugatelem} to the identity permutation and $\sigma_1:[a,b]\rightarrow[a,b]$ such that $\sigma_1(k)$ is odd if $k<\frac{a+b}{2}$ and even if $k>\frac{a+b}{2}$. Then by Lemma \ref{lem:gyr}, we see that $\togpro_H$ is conjugate to $\mathrm{Gyr}_H$.

Similarly, we can let $g_i$ be $R_H^i$ and apply Lemma~\ref{lem:conjugatelem} to the identity permutation and $\sigma_2:[1,c]\rightarrow[1,c]$ such that $\sigma_2(k)$ is odd if $k<\frac{c+1}{2}$ and even if $k>\frac{c+1}{2}$. Then by Lemma \ref{lem:gyrR}, we see that rowmotion is also is conjugate to $\mathrm{Gyr}_H$.

Thus, rowmotion and toggle-promotion are both conjugate to gyration, and thus conjugate to each other.
\end{proof}

\begin{corollary}
Given two column toggle orders $H_1$ and $H_2$ of $P$, $\togpro_{H_1}$ and $\togpro_{H_2}$ are both conjugate to $\row$, and thus conjugate to each other.
\end{corollary}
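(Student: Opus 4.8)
The plan is to derive this immediately from Theorem~\ref{thm:newprorow} together with the fact that conjugacy is an equivalence relation within a fixed group. First I would observe that both $H_1$ and $H_2$ are column toggle orders, so Theorem~\ref{thm:newprorow} applies verbatim to each: it produces toggle group elements $d_1$ and $d_2$ with $\togpro_{H_1} = d_1\,\row\,d_1^{-1}$ and $\togpro_{H_2} = d_2\,\row\,d_2^{-1}$. Tracing through the proof of Theorem~\ref{thm:newprorow}, each $d_i$ is the product of the conjugating elements furnished by Lemma~\ref{lem:conjugatelem} in passing from $\togpro_{H_i}$ to $\mathrm{Gyr}_{H_i}$ and from $\mathrm{Gyr}_{H_i}$ back to $\row$, and all of these lie in the toggle group of $P$.

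The first assertion of the corollary is then exactly the content of these two applications. For the second assertion I would solve each relation for $\row$ and substitute: since $\row = d_1^{-1}\,\togpro_{H_1}\,d_1 = d_2^{-1}\,\togpro_{H_2}\,d_2$, rearranging yields
\[
\togpro_{H_1} = (d_1 d_2^{-1})\,\togpro_{H_2}\,(d_1 d_2^{-1})^{-1},
\]
so $d_1 d_2^{-1}$ conjugates $\togpro_{H_2}$ to $\togpro_{H_1}$. Equivalently, conjugacy of group elements is transitive and symmetric, so being conjugate to the common element $\row$ forces $\togpro_{H_1}$ and $\togpro_{H_2}$ to be conjugate to each other.

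There is no substantive obstacle here; the single point worth confirming is that all the maps in sight genuinely live in one group, namely the toggle group of $P$, so that the word ``conjugate'' is unambiguous. Since $\togpro_{H_1}$, $\togpro_{H_2}$, $\row$, and each $d_i$ are by construction products of the toggles $t_p$, this holds automatically, and the corollary follows.
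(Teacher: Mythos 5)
Your proposal is correct and matches the paper's (implicit) argument exactly: the corollary is stated immediately after Theorem~\ref{thm:newprorow} with no separate proof, precisely because it follows by applying that theorem to each of $H_1$ and $H_2$ and then invoking symmetry and transitivity of conjugacy within the toggle group. Your explicit computation with $d_1 d_2^{-1}$ is a correct spelling-out of the same reasoning.
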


\subsection{Applications of the conjugacy of toggle-promotion and rowmotion}
\label{sec:app}
As our first application of Theorem~\ref{thm:newprorow}, we consider $\Gamma(P,R)$.

\begin{lemma}
\label{lem:TogPro}
For any $\Gamma(P,R)$, the map $H_{\Gamma}:\Gamma(P,R)\rightarrow\mathbb{Z}$ taking $(p,k)$ to $k$
defines a  toggle order.
\end{lemma}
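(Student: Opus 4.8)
The plan is to verify the defining property of a toggle order (Definition~\ref{TogOrder}) directly from the description of the covering relations of $\Gamma(P,R)$ given in Definition~\ref{def:GammaOne}. Since $H_{\Gamma}$ sends $(p,k)$ to its second coordinate $k$, the goal is to show that every covering relation $(p_1,k_1)\lessdot(p_2,k_2)$ in $\Gamma(P,R)$ satisfies $k_1\neq k_2$. Because Definition~\ref{def:GammaOne} lists exactly two types of covering relation, the argument splits naturally into two cases, and in each case it suffices to extract a strict inequality between the second coordinates.

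First I would handle the covering relations of type (1), where $p_1=p_2$ and $k_1=R(p_1)_{>k_2}$. By definition $R(p_1)_{>k_2}$ is the smallest label of $R(p_1)$ strictly larger than $k_2$, so $k_1>k_2$ and in particular $k_1\neq k_2$. Next I would handle type (2), where $p_1\lessdot p_2$ in $P$ and $k_1=R(p_1)_{<k_2}$; here $R(p_1)_{<k_2}$ is by definition the largest label of $R(p_1)$ strictly less than $k_2$, so $k_1<k_2$ and again $k_1\neq k_2$. The extra requirement in type (2) (that no greater element of $R(p_2)$ has $k_1$ as the largest label of $R(p_1)$ below it) only pins down which relations are covers and plays no role in the inequality, so it can be ignored here. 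Since these two cases exhaust all covering relations of $\Gamma(P,R)$, this yields $H_{\Gamma}(p_1,k_1)=k_1\neq k_2=H_{\Gamma}(p_2,k_2)$ whenever $(p_1,k_1)\lessdot(p_2,k_2)$, which is exactly the toggle order condition.

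There is essentially no obstacle in this argument: the only point requiring care is the observation that both $R(p)_{>k}$ and $R(p)_{<k}$ are defined so as to produce labels \emph{strictly} larger and strictly smaller than $k$, respectively, so that each type of cover forces a strict inequality between the two second coordinates. Once this is noted the conclusion is immediate. It is worth remarking that the proof in fact establishes slightly more than the toggle order property demands, namely that the second coordinate strictly increases along type-(1) covers and strictly decreases along type-(2) covers; this sharper statement is precisely what will later make it meaningful to test $H_{\Gamma}$ against the stronger column toggle order condition of Definition~\ref{def:ColTog}.
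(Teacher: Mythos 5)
Your proof is correct and takes essentially the same approach as the paper, which simply observes that neither type of covering relation in Definition~\ref{def:GammaOne} can have equal second coordinates; you merely spell out the two cases explicitly via the strictness built into $R(p)_{>k}$ and $R(p)_{<k}$. (One tiny quibble in your closing aside: since $(p_1,k_1)\lessdot(p_2,k_2)$ with $k_1>k_2$ in type (1), the second coordinate \emph{decreases} going up a type-(1) cover and increases going up a type-(2) cover, but this does not affect the argument.)
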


\begin{proof}
This follows from the covering relations given in Definition~\ref{def:GammaOne}, as we can never have a covering relation $(p_1,k)\lessdot (p_2,k)$.
\end{proof}

Since the construction of $\Gamma$ gives a natural toggle order, we may define toggle-promotion with respect to this toggle order. 
Lemma~\ref{lem:TogPro} and Theorem~\ref{thm:newprorow} yield the following corollary.

\begin{corollary}
\label{cor:togcolprorow}
If $H_{\Gamma}$ is a column toggle order, then $\togpro_{H_{\Gamma}}$ on $J(\Gamma(P,R))$ is conjugate to $\row$.
\end{corollary}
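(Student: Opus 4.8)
The plan is to apply Theorem~\ref{thm:newprorow} directly to the poset $\Gamma(P,R)$, taking the map $H_{\Gamma}$ as the column toggle order required by that theorem. The hypothesis of the corollary supplies precisely what Theorem~\ref{thm:newprorow} demands: we are assuming that $H_{\Gamma}$ is a column toggle order of $\Gamma(P,R)$. So this corollary is a direct specialization, and essentially all of its substance is already carried by Theorem~\ref{thm:newprorow}.

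First I would confirm that the hypotheses of Theorem~\ref{thm:newprorow} are genuinely met. That theorem requires a column toggle order on the poset under consideration; here the poset is $\Gamma(P,R)$ and the order is $H_{\Gamma}$, which is assumed to be a column toggle order. It is worth noting that being a column toggle order is strictly stronger than being a toggle order: Definition~\ref{def:ColTog} forces $H_{\Gamma}(p_1) = H_{\Gamma}(p_2) \pm 1$ whenever $p_1 \lessdot p_2$, which in particular gives $H_{\Gamma}(p_1) \neq H_{\Gamma}(p_2)$, so the toggle-order property is automatic. In any case, Lemma~\ref{lem:TogPro} already guarantees unconditionally that $H_{\Gamma}$ is a toggle order, and the added column hypothesis is exactly the extra input Theorem~\ref{thm:newprorow} needs. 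With these hypotheses verified, the theorem immediately yields that $\togpro_{H_{\Gamma}}$ on $J(\Gamma(P,R))$ is conjugate to $\row$.

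Since the real work lives in Theorem~\ref{thm:newprorow} (and, beneath it, in the gyration argument built from Lemmas~\ref{lem:gyr} and~\ref{lem:gyrR} together with the group-theoretic Lemma~\ref{lem:conjugatelem}), there is no genuine obstacle to overcome in this corollary. The only point requiring care is the bookkeeping identification: the abstract poset ``$P$'' in the statement of Theorem~\ref{thm:newprorow} is instantiated as $\Gamma(P,R)$ here, and the toggle order supplied is $H_{\Gamma}$. Once that instantiation is made explicit, the conclusion is immediate.
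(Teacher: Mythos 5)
Your proposal is correct and matches the paper exactly: the corollary is stated there as an immediate consequence of Lemma~\ref{lem:TogPro} and Theorem~\ref{thm:newprorow}, obtained by instantiating the poset of that theorem as $\Gamma(P,R)$ with column toggle order $H_{\Gamma}$. No further commentary is needed.
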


In Theorem~\ref{thm:proequalspro}, we will show that $\togpro_{H_{\Gamma}}$ on $J(\Gamma(P,R))$ exactly corresponds to increasing labeling promotion $\incpro$ on $\inc{R}{P}$.

\smallskip
We obtain a stronger result when we look at the case where the range of values for each entry is an interval. Note that $\Gamma(P,q)$ is one such example.

\begin{lemma}
\label{lem:GammaColTog}
If a consistent restriction function $R$ always has $R(p)$ a non-empty interval, then for $\Gamma(P,R)$, the map $H_{\Gamma}$ defines a column toggle order.
\end{lemma}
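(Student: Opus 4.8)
The plan is to read off the conclusion directly from the explicit description of covering relations in $\Gamma(P,R)$ given by Theorem~\ref{thm:interval}, which applies precisely under the present hypothesis that $R(p)$ is a non-empty interval for every $p\in P$. Recall that a column toggle order requires $p_1\lessdot p_2$ to force $H(p_1)=H(p_2)\pm 1$; since $H_\Gamma(p,k)=k$, it therefore suffices to verify that the two endpoints of every covering relation in $\Gamma(P,R)$ have second coordinates differing by exactly $1$.

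First I would apply Theorem~\ref{thm:interval} to reduce to its two enumerated types of covering relation. In type (1), a cover $(p_1,k_1)\lessdot(p_2,k_2)$ has $p_1=p_2$ and $k_1=k_2+1$, so $H_\Gamma(p_1,k_1)=k_1=k_2+1=H_\Gamma(p_2,k_2)+1$. In type (2), we have $p_1\lessdot_P p_2$ and $k_1+1=k_2$, whence $H_\Gamma(p_1,k_1)=k_1=k_2-1=H_\Gamma(p_2,k_2)-1$. In either case the two values of $H_\Gamma$ differ by exactly $1$, which is the defining condition of Definition~\ref{def:ColTog}, so $H_\Gamma$ is a column toggle order.

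Since Lemma~\ref{lem:TogPro} already establishes that $H_\Gamma$ is a toggle order for any $\Gamma(P,R)$, the only additional content in this lemma is the sharper $\pm 1$ conclusion, and the two displayed case computations supply it immediately. I expect no genuine obstacle to remain at this stage: the substantive work was done in proving Theorem~\ref{thm:interval}, where the interval hypothesis was exactly what collapsed the more delicate second covering condition of Definition~\ref{def:GammaOne} (involving ``no greater $k\in R(p_2)$ having $k_1$ as the largest label below it'') into the clean equation $k_1+1=k_2$. The lemma is thus a one-line corollary of that structural result, and in particular it immediately covers $\Gamma(P,q)$, whose ranges $[1+\delta(p),q-\nu(p)]$ are intervals.
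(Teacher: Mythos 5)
Your proof is correct and follows the same route as the paper: both read the $\pm 1$ condition directly off the two types of covering relations in Theorem~\ref{thm:interval}. Your version just spells out the two case computations that the paper leaves implicit.
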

\begin{proof}
By the covering relations given in Theorem~\ref{thm:interval}, we can see that every covering relation always either increases or decreases the second component by one, and thus $H_{\Gamma}$ gives us a column toggle order.
\end{proof}

This lemma and Corollary~\ref{cor:togcolprorow} yield the following.

\begin{corollary}
If a consistent restriction function $R$ has $R(p)$ a non-empty interval for all $p\in P$, then $\togpro_{H_{\Gamma}}$ on $J(\Gamma(P,R))$ is conjugate to $\row$.
\end{corollary}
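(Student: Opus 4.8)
The plan is to deduce this corollary directly by chaining the two immediately preceding results, since the substantive content has already been established upstream. First I would invoke Lemma~\ref{lem:GammaColTog}: under the hypothesis that $R$ is consistent with $R(p)$ a non-empty interval for every $p\in P$, that lemma asserts precisely that the map $H_{\Gamma}$ taking $(p,k)$ to $k$ is a column toggle order on $\Gamma(P,R)$. The verification there rests on the simplified description of covering relations in Theorem~\ref{thm:interval}, where every covering relation $(p_1,k_1)\lessdot(p_2,k_2)$ has second components differing by exactly one, which is the defining condition of a column toggle order in Definition~\ref{def:ColTog}.

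With $H_{\Gamma}$ now known to be a column toggle order, the second step is simply to apply Corollary~\ref{cor:togcolprorow}, whose hypothesis is exactly that $H_{\Gamma}$ is a column toggle order and whose conclusion is that $\togpro_{H_{\Gamma}}$ on $J(\Gamma(P,R))$ is conjugate to $\row$. That corollary is itself the specialization of Theorem~\ref{thm:newprorow} to the poset $\Gamma(P,R)$ together with the toggle order $H_{\Gamma}$, so the conjugacy ultimately descends from the gyration argument of Theorem~\ref{thm:newprorow}, where both $\togpro_{H_{\Gamma}}$ and rowmotion are shown to be conjugate to $\mathrm{Gyr}_{H_{\Gamma}}$ via Lemma~\ref{lem:conjugatelem}.

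I do not anticipate a genuine obstacle in the corollary itself, as it is a one-line composition of Lemma~\ref{lem:GammaColTog} and Corollary~\ref{cor:togcolprorow}. The real work lives earlier: establishing via Theorem~\ref{thm:interval} that an interval-valued restriction function forces covering relations only between adjacent columns (the content of Lemma~\ref{lem:GammaColTog}), and proving via the gyration-conjugacy machinery of Theorem~\ref{thm:newprorow} that any column toggle order yields a toggle-promotion conjugate to rowmotion. The only care needed is to confirm that the interval hypothesis is exactly what Lemma~\ref{lem:GammaColTog} requires; since $\Gamma(P,q)$ arises from the interval-valued restriction $R(p)=[1+\delta(p),q-\nu(p)]$, this corollary in particular covers the globally bounded setting $\inc{q}{P}$.
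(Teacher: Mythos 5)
Your proposal is correct and matches the paper's own derivation exactly: the paper obtains this corollary by combining Lemma~\ref{lem:GammaColTog} (interval-valued $R$ makes $H_{\Gamma}$ a column toggle order) with Corollary~\ref{cor:togcolprorow} (a column toggle order $H_{\Gamma}$ makes $\togpro_{H_{\Gamma}}$ conjugate to $\row$), just as you describe. Your identification of where the real work lies --- in Theorem~\ref{thm:interval} and the gyration argument of Theorem~\ref{thm:newprorow} --- is also accurate.
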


A second application comes from Cartesian products.

\begin{definition}
We say that a Cartesian embedding of a ranked poset $P$ into an ordered pair of ranked posets $(P_1,P_2)$ is an order and rank preserving map from $P$ into the Cartesian product $P_1\times P_2$. If $P$ has a Cartesian embedding into $(P_1,P_2)$, we can denote a poset element $p\in P$ by its coordinates $(p_1,p_2)$ under this map, where $p_1\in P_1$, $p_2\in P_2$.
\end{definition}

\begin{remark}
The identity map on $P_1\times P_2$ is always a Cartesian embedding.
\end{remark}

\begin{remark}
\label{rmk:cartembed}
Note that we care not only about the isomorphism class of $P_1\times P_2$, but also the specific decomposition. For example, given $P=[a]\times [b] \times [c]$, we want to think of the inclusion map with $P_1=[a]\times [b]$ and $P_2=[c]$ as a distinct Cartesian embedding from either the inclusion map with $P_1=[a]$ and $P_2=[b]\times [c]$ or $P_1=[a]\times [b] \times [c]$ and $P_2$ consisting of a single element.
\end{remark}

\begin{lemma}
\label{lem:cartesian}
Let $P$ be a ranked poset with a Cartesian embedding into ranked posets $(P_1,P_2)$. Let $H$ map the element of $P$ embedded at coordinate $(p_1,p_2)$ to the difference of ranks $\mathrm{rk}_{P_1}(p_1) - \mathrm{rk}_{P_2}(p_2)$. Then $H$ defines a column toggle order.
\end{lemma}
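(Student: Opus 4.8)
The plan is to verify the defining property of a column toggle order (Definition~\ref{def:ColTog}) directly: given a covering relation $a \lessdot b$ in $P$, I must show $H(a) = H(b) \pm 1$. Writing the coordinates of $a$ and $b$ under the Cartesian embedding as $(a_1,a_2)$ and $(b_1,b_2)$, the whole argument reduces to tracking two nonnegative integers---the rank jumps in each factor---and showing they sum to $1$.

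First I would use that the embedding is order preserving. Since $a \le b$ in $P$ (as $a \lessdot b$), the images satisfy $(a_1,a_2) \le (b_1,b_2)$ in $P_1 \times P_2$, and because the product order is componentwise, $a_1 \le b_1$ in $P_1$ and $a_2 \le b_2$ in $P_2$. As $P_1$ and $P_2$ are ranked, this gives $\mathrm{rk}_{P_1}(b_1) - \mathrm{rk}_{P_1}(a_1) \ge 0$ and $\mathrm{rk}_{P_2}(b_2) - \mathrm{rk}_{P_2}(a_2) \ge 0$, both integers.

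Next I would use that the embedding is rank preserving together with the fact that $a \lessdot b$ is a covering relation, so that $\mathrm{rk}_{P}(b) = \mathrm{rk}_{P}(a) + 1$. Since rank is preserved and the rank in $P_1 \times P_2$ is the sum of the two coordinate ranks, this yields
\[
\big(\mathrm{rk}_{P_1}(b_1) - \mathrm{rk}_{P_1}(a_1)\big) + \big(\mathrm{rk}_{P_2}(b_2) - \mathrm{rk}_{P_2}(a_2)\big) = 1.
\]
Two nonnegative integers summing to $1$ forces exactly one of them to equal $1$ and the other $0$. Substituting into $H(a) = \mathrm{rk}_{P_1}(a_1) - \mathrm{rk}_{P_2}(a_2)$ and $H(b) = \mathrm{rk}_{P_1}(b_1) - \mathrm{rk}_{P_2}(b_2)$, the difference $H(b) - H(a)$ is the $P_1$-jump minus the $P_2$-jump, which is $+1$ when the jump occurs in the first factor and $-1$ when it occurs in the second. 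Either way $H(a) = H(b) \pm 1$, as required.

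There is essentially no hard obstacle here; the only point demanding care is confirming that a covering relation in $P$ corresponds to a total rank increase of exactly $1$ distributed across the two factors. This is precisely where both the order-preserving and rank-preserving hypotheses are genuinely used: order preservation rules out a negative jump in either factor, and rank preservation pins the total jump at $1$, so the relation can neither stay in the same column (a total jump of $0$) nor skip a column (a total jump of $2$ or more).
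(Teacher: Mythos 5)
Your proof is correct and follows essentially the same route as the paper: both verify Definition~\ref{def:ColTog} directly on a covering relation $a\lessdot b$ by showing the rank change splits as $1$ in one factor and $0$ in the other. The only (cosmetic) difference is that the paper phrases this via the fact that covers in $P$ map to covers of $P_1\times P_2$, whereas your rank-arithmetic argument (two nonnegative integer jumps summing to $1$) reaches the same conclusion slightly more self-containedly.
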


\begin{proof}
Since $W$ is an order and rank preserving map, covering relations in $P$ get mapped to covering relations in $P_1\times P_2$. In the Cartesian product, we have $(x_1,y_1)\lessdot (x_2,y_2)$ if and only if either $x_1=y_2$ and $y_1\lessdot y_2$, or $x_1\lessdot x_2$ and $y_1=y_2$. In either case, $\mathrm{rk}_{P_1}(x_1) - \mathrm{rk}_{P_2}(y_1)=\mathrm{rk}_{P_1}(x_2) - \mathrm{rk}_{P_2}(y_2)\pm 1$, where $\rk_P$ indicates any rank function for the poset $P$.
\end{proof}

This lemma and Theorem~\ref{thm:newprorow} yield the following corollary.

\begin{corollary}
\label{cor:cartesian}
For $P$ and $H$ as in Lemma~\ref{lem:cartesian},
$\togpro_H$ on $J(P)$ is conjugate to $\row$ on $J(P)$.
\end{corollary}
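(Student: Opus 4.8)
The plan is to deduce this corollary immediately by composing the two results it is built on. The entire substance of the statement is that the particular map $H$ described here—sending the element embedded at $(p_1,p_2)$ to the rank difference $\mathrm{rk}_{P_1}(p_1)-\mathrm{rk}_{P_2}(p_2)$—falls into the framework of Theorem~\ref{thm:newprorow}, so the conjugacy is inherited rather than re-proved. Concretely, I would first invoke Lemma~\ref{lem:cartesian}, which is stated for exactly the same $P$ and $H$ and asserts that this $H$ is a column toggle order. That lemma does the only genuine verification required, namely checking that every covering relation in $P$ changes the value of $H$ by exactly $\pm 1$: since the Cartesian embedding is rank-preserving, a cover in $P$ maps to a cover in $P_1\times P_2$, and a cover in a Cartesian product increments exactly one of the two coordinate ranks, shifting the rank difference by $\pm 1$ as Definition~\ref{def:ColTog} demands.

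Having established via Lemma~\ref{lem:cartesian} that $H$ is a column toggle order, I would then apply Theorem~\ref{thm:newprorow} verbatim. That theorem guarantees that for \emph{any} column toggle order $H$ of $P$, the toggle group action $\togpro_H$ on $J(P)$ is conjugate to $\row$ on $J(P)$. Since the hypothesis of the theorem is now met, the conclusion of the corollary follows at once with no further argument.

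There is effectively no obstacle remaining at this stage: all the real work lives upstream. The combinatorial content—that the rank-difference map respects the $\pm 1$ column condition—is dispatched by Lemma~\ref{lem:cartesian}, and the genuinely nontrivial step—showing a column-toggle action and rowmotion are both conjugate to gyration $\mathrm{Gyr}_H$ (via the parity bookkeeping of Lemma~\ref{lem:parity} and the Coxeter-type conjugacy Lemma~\ref{lem:conjugatelem})—is already carried out in the proof of Theorem~\ref{thm:newprorow}. Thus the proof I would write is a two-sentence chaining: "By Lemma~\ref{lem:cartesian}, $H$ is a column toggle order. The result then follows from Theorem~\ref{thm:newprorow}." If anything, the only point worth a remark is that Theorem~\ref{thm:newprorow} does not require $P$ to be ranked globally in a way incompatible with the embedding; but here $P$ is assumed ranked and the embedding is rank-preserving, so no compatibility issue arises.
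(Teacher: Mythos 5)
Your proposal is correct and matches the paper exactly: the paper derives Corollary~\ref{cor:cartesian} by combining Lemma~\ref{lem:cartesian} (which shows the rank-difference map $H$ is a column toggle order) with Theorem~\ref{thm:newprorow}. Nothing further is needed.
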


\begin{remark}
Hyperplane promotion $\mathrm{Pro}_{\pi,v}$ of \cite{DPS2015} with respect to a lattice embedding $\pi$ can be thought of a special case of this. In particular, the $2^n$ choices of hyperplanes correspond to the $2^n$ ways that we can choose a subset $S\subseteq [n]$ and define a Cartesian embedding from $\mathbb{Z}^n$ to $\mathbb{Z}^{|S|}\times\mathbb{Z}^{n-|S|}$ by permuting coordinates so coordinates in $S$ go to one of the first $|S|$ copies of $\mathbb{Z}$, and coordinates not in $S$ get permuted to the last $n-|S|$ copies of $\mathbb{Z}$.

In particular, let $P$ be a poset with an $n$-dimensional lattice embedding $\pi$ and let $v \in\{-1,1\}^n$. Say that $i_1 < \ldots < i_a $ are the coordinates with $v_i=+1$ and $j_1 < \ldots < j_{n-a}$ are the coordinates with $v_j=-1$. Then the composition of $\pi$ sending $P$ to $\mathbb{Z}^n$ and the map sending $(x_1,\ldots,x_n)$ to $((x_{i_1},\ldots, x_{i_a}),(x_{j_1},\ldots, x_{j_{n-a}}))$ is a Cartesian embedding from $P$ to $P_1\times P_2$ for $P_1=\mathbb{Z}^a$ and $P_2=\mathbb{Z}^{n-a}$. The standard rank function on ${\mathbb{Z}^m}$ is  the sum of the coordinates. So if we write $v=v^+-v^-$ as a difference of $(0,1)$-vectors, then $\langle \pi(p),v^+\rangle = \rk_{P_1}(\pi(p))$, $\langle \pi(p),v^-\rangle = \rk_{P_2}(\pi(p))$ and thus $\langle \pi(p),v\rangle = \rk_{P_1}(\pi(p))-\rk_{P_2}(\pi(p))$. Partitioning poset elements by which hyperplane  $\langle \pi(p),v\rangle=i$ they lie on   creates a column toggle order.

For example, the vectors $(1,1,-1)$, $(1,-1,-1)$, and $(1,1,1)$ correspond (respectively) to the three Cartesian embeddings in Remark \ref{rmk:cartembed}.
\end{remark}

\subsection{Toggle-promotion is Bender-Knuth promotion}
\label{pro=pro}
In this subsection, we prove Theorem~\ref{thm:prorow}, showing that the bijection between $\inc{R}{P}$ and order ideals of $\Gamma(P,R)$ discussed in Sections~\ref{subsec:gamma1} takes increasing labeling promotion $\incpro$ to toggle-promotion $\togpro_{H_{\Gamma}}$ on $J(\Gamma(P,R))$ whenever $H_{\Gamma}$ is a column toggle order. This gives a new realm in which promotion and rowmotion have the same orbit structure. 

We begin by showing that $\incpro$ on increasing labelings $\inc{R}{P}$ exactly coincides with toggle-promotion on order ideals of $\Gamma(P,R)$, since the toggles $T^k_{H_{\Gamma}}$ exactly coincide with the generalized Bender-Knuth involutions $\rho_k$.

\begin{theorem}
\label{thm:proequalspro}
$\inc{R}{P}$ under $\incpro$ is in equivariant bijection with $J(\Gamma(P,R))$ under $\togpro_{H_{\Gamma}}$.
\end{theorem}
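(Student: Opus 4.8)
The plan is to reduce the equivariance to a single per-generator identity matching each generalized Bender--Knuth involution $\rho_k$ with the corresponding toggle block $T_{H_\Gamma}^k$, and then observe that $\incpro$ and $\togpro_{H_\Gamma}$ are the compositions of these generators taken in the same order. First I would record the explicit form of the bijection of Theorem~\ref{thm:Gamma1MeetIrred}: the increasing labeling $f\in\inc{R}{P}$ corresponds to the order ideal
\[
I_f=\{(p,k)\in\Gamma(P,R): f(p)\le k\}.
\]
Thus, within the chain of elements $(p,k)$ for fixed $p$ (in which larger $k$ sits lower, by covering relation~(1) of Definition~\ref{def:GammaOne}), the ideal $I_f$ consists exactly of those $(p,k)$ with $k\ge f(p)$. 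One verifies this is the Birkhoff correspondence by comparing with the meet irreducible indexed by $(p,k)$.

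The heart of the argument is the claim that $I_{\rho_k(f)}=T_{H_\Gamma}^k(I_f)$ for every $k$. Since $H_\Gamma$ is a toggle order (Lemma~\ref{lem:TogPro}), the toggles $t_{(p,k)}$ over all $p$ pairwise commute and act independently, and $\rho_k$ likewise acts independently at each $p$, so it suffices to analyze a single $p$. I would argue that toggling $(p,k)$ can change membership only of $(p,k)$ itself, and that $(p,k)$ is the top element of the chain $\{(p,k')\}\cap I_f$ exactly when $f(p)=k$, and the top element of that chain lying outside $I_f$ exactly when $f(p)=R(p)_{>k}$; in every other case the type-(1) chain covers force the toggle to do nothing, matching the ``otherwise'' clause of $\rho_k$. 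When $f(p)=k$, removing $(p,k)$ raises the ideal's boundary in the chain to $R(p)_{>k}$, producing $I_g$ with $g(p)=R(p)_{>k}$, precisely the value $\rho_k$ assigns; the case $f(p)=R(p)_{>k}$ is dual.

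The main obstacle, and the step I would spend the most care on, is showing the \emph{success conditions} agree: that $t_{(p,k)}$ succeeds if and only if the single-element swap prescribed by $\rho_k$ yields a valid increasing labeling. This is exactly where the type-(2) covering relations of Definition~\ref{def:GammaOne} do their work. For the removal case ($f(p)=k$), the toggle succeeds iff no cover of $(p,k)$ lies in $I_f$; the type-(1) cover $(p,R(p)_{<k})$ is automatically absent, so the condition is that every type-(2) cover $(p_2,k_2)$, with $p\lessdot p_2$, satisfies $k_2<f(p_2)$. I would then prove the key numerical fact that, because $k_2$ is the \emph{largest} label of $R(p_2)$ with $R(p)_{<k_2}=k$ (the maximality built into Definition~\ref{def:GammaOne}(2)), for $f(p_2)\in R(p_2)$ with $f(p_2)>k$ one has $f(p_2)\le k_2 \iff f(p_2)\le R(p)_{>k}$, and moreover that whenever $f(p_2)\le R(p)_{>k}$ the value $f(p_2)$ lies in $(k,R(p)_{>k}]$, forcing such a $k_2\ge f(p_2)$ to exist and to genuinely cover $(p,k)$. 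Hence ``$k_2<f(p_2)$ for all type-(2) covers'' is equivalent to ``$R(p)_{>k}<f(p_2)$ for all $p_2\gtrdot p$,'' which is exactly the condition that raising $f(p)$ to $R(p)_{>k}$ keeps $f$ increasing.

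The addition case ($f(p)=R(p)_{>k}$) is handled dually: the toggle succeeds iff every element covered by $(p,k)$ lies in $I_f$. The type-(1) covered element $(p,R(p)_{>k})$ is automatically present, and a symmetric computation shows that for each $p_1\lessdot p$ the element $(p_1,R(p_1)_{<k})$ is absent from $I_f$ precisely when $f(p_1)\ge k$; the same maximality condition guarantees that in this obstructing case $(p_1,R(p_1)_{<k})$ actually covers $(p,k)$ (otherwise $f(p_1)$ would be forced into $[k,R(p)_{>k})$, contradicting $f(p_1)<f(p)=R(p)_{>k}$). Thus the toggle succeeds iff $f(p_1)<k$ for all $p_1\lessdot p$, which is exactly when lowering $f(p)$ to $k$ keeps $f$ increasing. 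Granting the local identity $I_{\rho_k(f)}=T_{H_\Gamma}^k(I_f)$, and since $\incpro=\cdots\rho_2\circ\rho_1\circ\cdots$ and $\togpro_{H_\Gamma}=\cdots T_{H_\Gamma}^{2}T_{H_\Gamma}^{1}T_{H_\Gamma}^{0}\cdots$ apply the generator indexed by $k$ in the same increasing order of $k$ (a finite product in each case, by finiteness of $P$ and of the $R(p)$), the bijection $f\mapsto I_f$ carries $\incpro$ to $\togpro_{H_\Gamma}$, yielding the desired equivariant bijection.
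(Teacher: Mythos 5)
Your proposal is correct and follows the same route as the paper: both reduce the statement to the per-generator identity matching $\rho_k$ with $T_{H_{\Gamma}}^k$ and then analyze the three cases (toggle out, toggle in, fixed) at a single element $(p,k)$ of $\Gamma(P,R)$. In fact your verification that the toggle's success conditions coincide with the validity of the label swap --- via the maximality clause in Definition~\ref{def:GammaOne}(2) together with consistency of $R$ --- is carried out in more detail than in the paper's proof, which essentially asserts this equivalence.
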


This follows from the lemma below.

\begin{lemma}
The map from  $\inc{R}{P}$ to order ideals in $\Gamma(P,R)$ equivariantly takes the generalized Bender-Knuth involution $\rho_{k}$ to the toggle operator $T_{H_{\Gamma}}^k$.
\end{lemma}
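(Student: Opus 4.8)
The plan is to make the Birkhoff correspondence of Theorem~\ref{thm:Gamma1MeetIrred} completely explicit and then verify the intertwining relation $\Phi\circ\rho_k=T_{H_{\Gamma}}^k\circ\Phi$ (where $\Phi$ denotes that bijection) one poset element at a time, exploiting that within a single toggle-column the individual toggles act independently.

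\textbf{Step 1: pin down $\Phi$.} Reading off the meet irreducibles constructed for Theorem~\ref{thm:Gamma1MeetIrred}, I would first show that the order ideal attached to $f\in\inc{R}{P}$ is
\[ \Phi(f)=\{(p,k)\in\Gamma(P,R)\ :\ k\geq f(p)\}, \]
so that the $p$-chain of $\Gamma(P,R)$ contributes to $\Phi(f)$ exactly its elements $(p,k)$ with $k\geq f(p)$, and $f(p)$ is recovered as the least $k$ with $(p,k)\in\Phi(f)$ (and $f(p)=\max(R(p))$ when no such element exists, matching the Remark following Definition~\ref{def:GammaOne}). That $\Phi(f)$ is an order ideal follows by checking the two covering types of Definition~\ref{def:GammaOne} against the defining inequality $k\geq f(p)$, using that $f$ is increasing.

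\textbf{Step 2: reduce to a single element.} Fix $k$ and $f$, and set $I=\Phi(f)$. Since $H_{\Gamma}$ is a toggle order (Lemma~\ref{lem:TogPro}), $T_{H_{\Gamma}}^k$ is the product of the commuting toggles $t_{(p,k)}$ over all $p$ with $(p,k)\in\Gamma(P,R)$. Because two elements $(p,k),(p',k)$ sharing the second coordinate are never in a covering relation, toggling one does not change whether the other is addable or removable; hence $T_{H_{\Gamma}}^k$ flips precisely those $(p,k)$ that are individually toggleable in $I$. As $\rho_k$ likewise alters $f$ only at elements $p$ with $f(p)\in\{k,R(p)_{>k}\}$ and only when the move preserves increasingness, it suffices to match the action at each such $p$.

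\textbf{Step 3: match the cases.} I would then compare, for each $p$ with $(p,k)\in\Gamma(P,R)$: \emph{(i)} if $f(p)=k$, then $(p,k)$ is the topmost element of the $p$-chain inside $I$, its type-(1) cover $(p,R(p)_{<k})$ lies outside $I$, and for each cover $p'\gtrdot p$ in $P$ the type-(2) cover is the element $(p',k')$ with $k'$ the largest label of $R(p')$ in the interval $(k,R(p)_{>k}]$; a short computation gives $(p',k')\in I\iff f(p')\leq R(p)_{>k}$, so that $(p,k)$ is maximal in $I$ (removable) exactly when $f(p')>R(p)_{>k}$ for every cover $p'$, which is exactly the condition that $\rho_k$ raises $f(p)$ to $R(p)_{>k}$, and removing $(p,k)$ realizes this raise. \emph{(ii)} If $f(p)=R(p)_{>k}$, the symmetric argument shows $(p,k)$ is addable to $I$ exactly when $\rho_k$ lowers $f(p)$ to $k$. \emph{(iii)} Otherwise $\rho_k$ fixes $f(p)$ and $(p,k)$ is not toggleable: if $f(p)<k$ the type-(1) cover $(p,R(p)_{<k})$ lies in $I$ and blocks removal, while if $f(p)>R(p)_{>k}$ the covered element $(p,R(p)_{>k})$ lies outside $I$ and blocks addition. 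Assembling these across all $p$ yields $\Phi(\rho_k(f))=T_{H_{\Gamma}}^k(\Phi(f))$, and the lemma (hence Theorem~\ref{thm:proequalspro}) follows.

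\textbf{Main obstacle.} The hard part is Step 3(i)--(ii): translating the generalized Bender--Knuth condition ``the result is still in $\inc{R}{P}$'' into the toggle condition ``the result is still an order ideal.'' This hinges on identifying precisely which element $(p',k')$ realizes the second covering clause of Definition~\ref{def:GammaOne}, together with the edge cases in which no such cover exists (where one must check the raisability/lowerability condition holds vacuously). The key observation making this go through is that $k'\leq R(p)_{>k}$ while $k'$ is maximal in $(k,R(p)_{>k}]$, so membership of $(p',k')$ in $I$ is governed by the same threshold $R(p)_{>k}$ that controls whether $\rho_k$ may move $f(p)$; in the interval case (Theorem~\ref{thm:interval}) this collapses to the clean statement $k'=k+1$.
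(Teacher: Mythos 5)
Your proposal is correct and follows essentially the same route as the paper's proof: make the bijection $\Phi(f)=\{(p,k):k\geq f(p)\}$ explicit, reduce $T_{H_{\Gamma}}^k$ to independent single toggles within the column, and match the removable/addable/frozen cases of $t_{(p,k)}$ with the raise/lower/fix cases of $\rho_k$. The only difference is one of detail: you spell out, via the type-(2) covering relations of Definition~\ref{def:GammaOne}, why ``still an order ideal'' coincides with ``still an increasing labeling,'' a point the paper's proof asserts more briefly.
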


\begin{proof}
Recall that the column toggle order $H_{\Gamma}$ maps $(p,k)\in\Gamma(P,R)$ to $k$, so $T_{H_{\Gamma}}^k$ will toggle all elements in $\Gamma(P,R)$ of the form $(p,k)$.

It is easier to think about how toggling a single $(p,k)$ in $\Gamma(P,R)$ affects the corresponding increasing labeling.
When we toggle an individual $(p,k)$ in an order ideal $I$, there are three cases.

In the first case, $(p,k)$ can be toggled out. It can only be toggled out if it is a maximal element of the order ideal, which means that the corresponding increasing labeling gives the label $k$ to $p$. When we toggle $(p,k)$ out of $I$, the corresponding increasing labeling will now give the label $R(p)_{>k}$ to $p$, and the result is an increasing labeling. This is exactly the effect of $\rho_k$  in the first case.

In the second case, $(p,k)$ can be toggled in. This either means that $(p,R(p)_{>k})$ is in $I$, or no $(p,k')$ is in $I$. In both cases, the corresponding increasing labeling starts with $p$ being labeled with $R(p)_{>k}$ and getting reduced to $k$. This is exactly the effect of $\rho_k$  in the second case.

In the third case, $(p,k)$ can neither be toggled in nor out of $I$. This means 
that changing $p$ to $R(p)_{>k}$ (or vice versa) does not result in an increasing labeling.
\end{proof}

In essence, the generalized definition of $\rho_k$ was exactly constructed to coincide with $T_H^k$.

\begin{example}
\label{ex:proequalspro}
Consider Figure~\ref{fig:TogIsProEx_intro}, where we have a poset with a consistent restriction function $R$. On the left, we have an increasing labeling of this poset, and on the right, the corresponding order ideal in $\Gamma(P,R)$.

If we were to perform $\rho_2$ on the increasing labeling, we would try to increase the label of $b$ from 2 to 3, and we can, so we do. This corresponds to toggling $(b,2)$ and it being removed from the order ideal. We would also try to decrease the label of $c$ from 4 to 2, and we can, so we do. This is exactly the result of toggling $(c,2)$ and it being added to the order ideal.

Similarly, consider $\rho_4$. We would try to increment the label of $c$ from 4 to 5, but we cannot. The fact that the result is not an increasing labeling is equivalent to the fact that we cannot toggle out $(c,4)$ due to the presence of $(d,5)$ in the order ideal. Similarly, we would try and decrement the label of $d$ from 5 to 4, but we cannot. This corresponds to not being able to toggle in $(d,4)$. While $R(e)$ contains 4, the label it has is not currently 4, nor would decreasing it to the next lowest available label make it 4, so $\rho_4$ does nothing. This is equivalent to $(e,4)$ not being near enough to the boundary of the order ideal for it to have a chance at being toggled in or out.
\end{example}

Now, we may finally prove our main result of this section.

\begin{proof}[Proof of Theorem~\ref{thm:prorow}]
By Theorem~\ref{thm:proequalspro}, we know that the bijection between $\inc{R}{P}$ and $\Gamma(P,R)$ carries $\incpro$ on $\inc{R}{P}$ to $\togpro_{H_{\Gamma}}$ on $J(\Gamma(P,R))$. Then by Corollary~\ref{cor:togcolprorow}, if $H_{\Gamma}$ is a column toggle order, then $\togpro_{H_{\Gamma}}$ on $J(\Gamma(P,R))$ is conjugate to rowmotion.
Thus, the result follows.
\end{proof}

\begin{proof}[Proof of Corollary~\ref{cor:prorow}]
This follows, since by Lemma~\ref{lem:GammaColTog}, $H_{\Gamma}$ is a column toggle order for $\Gamma(P,q)$. 
\end{proof}

\begin{remark}
In this paper, we have shown the two components of the  proof both hold more generally; Theorem~\ref{thm:prorow} is a particular case where both components hold.  Theorem~\ref{thm:proequalspro} shows Bender-Knuth promotion on increasing labelings corresponds to toggle-promotion for a generic consistent restriction function $R$, not only the ones for which $H_{\Gamma}$ is a column toggle order. Similarly, Theorem~\ref{thm:newprorow} shows toggle-promotion is conjugate to rowmotion not only for $\Gamma(P,q)$, but for any poset which can be given a column toggle order.
\end{remark}

Finally, we obtain  as a corollary of Corollary~\ref{cor:prorow} and Theorem~\ref{thm:res} the following resonance result on order ideals in $\Gamma(P,q)$ under rowmotion.
Let $d$ be the toggle group element conjugating $\row$ to $\togpro_{H_{\Gamma}}$ (Corollary~\ref{cor:prorow} guarantees the existence of such an element).

\begin{corollary}
\label{cor:rowres}
Let $\varphi$ denote the map from an order ideal in $\Gamma(P,q)$ to the corresponding increasing labeling in $\inc{q}{P}$. Then
$(J(\Gamma(P,q)),\langle\row\rangle,\mbox{\rm Con}\circ\varphi\circ d)$ exhibits resonance with frequency~$q$. 
\end{corollary}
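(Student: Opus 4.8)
The plan is to push the resonance statement of Corollary~\ref{thm:res} through the chain of equivariances relating $\incpro$, $\togpro_{H_\Gamma}$, and $\row$, absorbing the conjugating element $d$ into the content map so that the composite $\varphi\circ d$ becomes the equivariant bijection carrying $\row$ directly to $\incpro$.

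First I would assemble the two equivariances already in hand. From Theorem~\ref{thm:proequalspro} (in the $q$-restricted form used for Corollary~\ref{cor:prorow}), the bijection $\varphi\colon J(\Gamma(P,q))\to\inc{q}{P}$ intertwines the toggle-promotion and Bender--Knuth dynamics,
\[
\incpro\circ\varphi=\varphi\circ\togpro_{H_\Gamma}.
\]
From Theorem~\ref{thm:newprorow} together with Lemma~\ref{lem:GammaColTog}, $H_\Gamma$ is a column toggle order, so there is a toggle group element $d$ conjugating $\row$ to $\togpro_{H_\Gamma}$; with the convention fixing $d$ this reads
\[
\togpro_{H_\Gamma}=d\circ\row\circ d^{-1}.
\]

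Next I would combine these to identify $\psi:=\varphi\circ d$ as an equivariant bijection taking $\row$ to $\incpro$. Substituting the second relation into the first and postcomposing with $d$ gives
\[
\incpro\circ\psi=\incpro\circ\varphi\circ d=\varphi\circ d\circ\row\circ d^{-1}\circ d=\psi\circ\row,
\]
using $d^{-1}\circ d=\mathrm{id}$ in the last step. Here $\psi$ is a bijection because $\varphi$ is a bijection and $d$, being an element of the toggle group, is a bijection of $J(\Gamma(P,q))$.

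Finally I would verify the resonance condition directly. Let $c$ be the order-$q$ cyclic shift of binary content vectors supplied by Corollary~\ref{thm:res}, so that $\mathrm{Con}\circ\incpro=c\cdot\mathrm{Con}$ on $\inc{q}{P}$. For each $I\in J(\Gamma(P,q))$,
\[
(\mathrm{Con}\circ\psi)(\row(I))=\mathrm{Con}\bigl(\incpro(\psi(I))\bigr)=c\cdot\mathrm{Con}(\psi(I))=c\cdot(\mathrm{Con}\circ\psi)(I),
\]
where the first equality uses the equivariance of $\psi$ and the second uses the resonance of $\incpro$. This is exactly the defining relation $c\cdot f(x)=f(g\cdot x)$ for resonance with frequency $q$ of the triple $(J(\Gamma(P,q)),\langle\row\rangle,\mathrm{Con}\circ\psi)$. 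Surjectivity of $\mathrm{Con}\circ\psi$ onto the set of attained content vectors, and the fact that $c$ acts nontrivially with order $q$, are inherited unchanged from Corollary~\ref{thm:res}, since $\psi$ is a bijection and hence $\mathrm{Con}\circ\psi$ has the same image as $\mathrm{Con}$. The argument is a formal composition of established equivariances, so there is no serious obstacle; the one point demanding care is fixing the direction of conjugation, ensuring $d$ appears on the side where it can be folded into $\varphi$ to produce a genuine $\row$-to-$\incpro$ intertwiner rather than obstructing the passage of resonance.
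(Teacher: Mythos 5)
Your argument is correct and is exactly the route the paper intends: Corollary~\ref{cor:rowres} is stated as an immediate consequence of Corollary~\ref{cor:prorow} (equivalently, the equivariances of Theorem~\ref{thm:proequalspro} and Theorem~\ref{thm:newprorow}) together with the resonance of $\incpro$ from Corollary~\ref{thm:res}, and the paper leaves the composition of equivariances implicit. Your write-up simply makes explicit the folding of the conjugating element $d$ into the bijection $\varphi$, with the correct direction of conjugation, so there is nothing to add.
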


\section{An inverse map (of sorts)}
\label{sec:converse}
Thus far, we have shown that given a poset and a consistent restriction function, one can construct posets whose order ideals are in bijection with strictly or weakly increasing labelings of the original poset. One might consider when the reverse can be done. That is, given a poset, when can one construct an auxiliary poset and consistent restriction function so that order ideals in the original poset are in bijection with strictly or weakly increasing labelings of the auxiliary poset? 

One potential issue is that our forward map constructs $\Gamma(P,R)$ and $\Gamma_2(P,R)$ with respect to a specific embedding inside $P\times\mathbb{Z}$.  Given an arbitrary poset $Q$, we may be able to realize it inside $P\times\mathbb{Z}$ for multiple posets $P$.

In particular, every poset $P$ naturally embeds inside $P\times\{1\}$, and order ideals in $P$ are in bijection with weakly increasing labelings of $P$ with largest entry 2 (via the map that sends an order ideal to the increasing labeling where an element gets the label 1 if it is in the order ideal, and 2 if it is not in the order ideal).

Even if we are given an injective map $Q\mapsto P\times\mathbb{Z}$, there does not seem to be a simple way to see if $Q$ is isomorphic to some $\Gamma(P,R)$. The only method that seems to work is to let $R(p)^*$ be the preimage of $\{p\}\times\mathbb{Z}$, make an intelligent guess on what $\max(R(p))$ should be, and see if the covering relations for $\Gamma(P,R)$ given by Definition~\ref{def:GammaOne} (or Theorem~\ref{thm:interval} if $R$ consists only of intervals) match the covering relations of $Q$ under the injective map. In some cases, this method can work out nicely.

For example, consider the triangular poset with $n$ minimal elements, as in Figure~\ref{fig:staircase}, for $n=3$. We think of covering relations going down and to the right as being covering relations in a chain $P_n$ of length $n$, and covering relations going down and to the left as being covering relations in $\mathbb{Z}$. This gives us an embedding of the triangular poset into $P_n\times\mathbb{Z}$.

One can see that this will exactly correspond to $\Gamma(P_n,R)$, where $P_n$ is a chain $a_1<\ldots <a_n$, and $R(a_i)=[i,2i]$. Increasing labelings of a chain with these restrictions exactly correspond to increasing sequences of length $n$ with largest possible entry $2n$, one of many known combinatorial families enumerated by the Catalan numbers \cite{Catalan}.

As a result of Theorem~\ref{thm:proequalspro}, we can see that toggle-promotion $\togpro_{H_{\Gamma}}$ on the triangular poset (which with this labeling is toggling by columns right to left) is exactly the same as $\incpro$ on the corresponding increasing labelings.

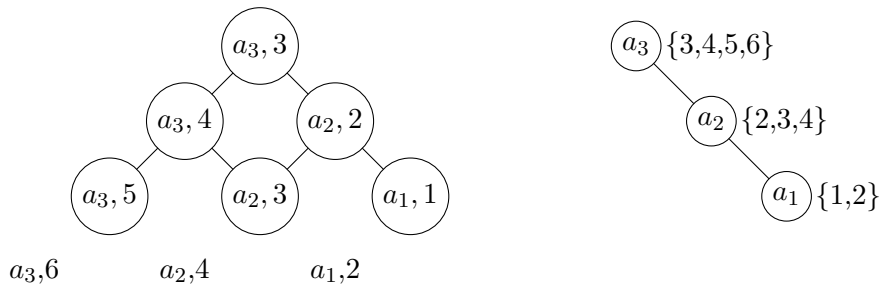
\begin{figure}[htbp]
\label{fig:staircase}
\begin{tikzpicture}
\begin{scope}[xshift=-5cm]
\coordinate (e) at (0,0);
\coordinate (b) at (1,1);
\coordinate (c) at (2,0);
\coordinate (d) at (-1,1);
\coordinate (f) at (-2,0);
\coordinate (a) at (0,2);
\coordinate (g) at (-3,-1);
\coordinate (h) at (-1,-1);
\coordinate (i) at (1,-1);
\draw[] (f) -- (d) -- (e) -- (b) -- (c) ;
\draw[] (d) -- (a) -- (b) ;
\node[fill=white,draw,circle,inner sep=.5ex] at (a) {$a_3,3$};
\node[fill=white,draw,circle,inner sep=.5ex] at (d) {$a_3,4$};
\node[fill=white,draw,circle,inner sep=.5ex] at (c) {$a_1,1$};
\node[fill=white,draw,circle,inner sep=.5ex] at (e) {$a_2,3$};
\node[fill=white,draw,circle,inner sep=.5ex] at (b) {$a_2,2$};
\node[fill=white,draw,circle,inner sep=.5ex] at (f) {$a_3,5$};
\node[inner sep=.5ex] at (g) {$a_3$,6};
\node[inner sep=.5ex] at (h) {$a_2$,4};
\node[inner sep=.5ex] at (i) {$a_1$,2};
%\node[above=.25cm] at (a) {3};
%\node[above=.25cm] at (d) {4};
%\node[above=.25cm] at (c) {1};
%\node[above=.25cm] at (e) {3};
%\node[above=.25cm] at (b) {2};
%\node[above=.25cm] at (f) {5};
%\node[above=.25cm] at (g) {6};
%\node[above=.25cm] at (h) {4};
%\node[above=.25cm] at (i) {2};
\end{scope}

\begin{scope}
\coordinate (b) at (1,1);
\coordinate (a) at (2,0);
\coordinate (c) at (0,2);
\draw[] (a) -- (b) -- (c) ;
\node[fill=white,draw,circle,inner sep=.5ex] at (c) {$a_3$};
\node[fill=white,draw,circle,inner sep=.5ex] at (a) {$a_1$};
\node[fill=white,draw,circle,inner sep=.5ex] at (b) {$a_2$};
\node[right=.25cm] at (c) {\{3,4,5,6\}};
\node[right=.25cm] at (a) {\{1,2\}};
\node[right=.25cm] at (b) {\{2,3,4\}};
\end{scope}
\end{tikzpicture}
\caption{The triangular poset with the appropriate labeling to realize it as $\Gamma(P,R)$ for $P$ the three element chain and $R$ as given on the right.}
\end{figure}

\section*{Acknowledgments}
The authors would like to thank the anonymous referees for helpful comments. JS was supported by a grant from the Simons Foundation/SFARI (527204, JS).

\bibliographystyle{abbrv}
\bibliography{master}

\begin{thebibliography}{10}

\bibitem{AST2013}
D.~Armstrong, C.~Stump, and H.~Thomas.
\newblock A uniform bijection between nonnesting and noncrossing partitions.
\newblock {\em Trans. Amer. Math. Soc.}, 365(8):4121--4151, 2013.

\bibitem{BK1972}
E.~A. Bender and D.~E. Knuth.
\newblock Enumeration of plane partitions.
\newblock {\em Journal of Combinatorial Theory, Series A}, 13(1):40 -- 54,
  1972.

\bibitem{BR2011}
D.~Bessis and V.~Reiner.
\newblock Cyclic sieving of noncrossing partitions for complex reflection
  groups.
\newblock {\em Ann. Comb.}, 15(2):197--222, 2011.

\bibitem{Birkhoff1940}
G.~Birkhoff.
\newblock {\em Lattice Theory}.
\newblock Number v. 25, pt. 2 in American Mathematical Society colloquium
  publications. American Mathematical Society, 1940.

\bibitem{BPS2016}
J.~Bloom, O.~Pechenik, and D.~Saracino.
\newblock Proofs and generalizations of a homomesy conjecture of {P}ropp and
  {R}oby.
\newblock {\em Discrete Math.}, 339(1):194--206, 2016.

\bibitem{Brouwer1975}
A.~Brouwer.
\newblock On dual pairs of antichains.
\newblock {\em Stichting Mathematisch Centrum. Zuivere Wiskunde}, (ZW
  40/75):1--8, 1975.

\bibitem{BS1974}
A.~E. Brouwer and A.~Schrijver.
\newblock {\em On the period of an operator, defined on antichains}.
\newblock Mathematisch Centrum, Amsterdam, 1974.
\newblock Mathematisch Centrum Afdeling Zuivere Wiskunde ZW 24/74.

\bibitem{BKSTY2008}
A.~S. Buch, A.~Kresch, M.~Shimozono, H.~Tamvakis, and A.~Yong.
\newblock Stable {G}rothendieck polynomials and {$K$}-theoretic factor
  sequences.
\newblock {\em Math. Ann.}, 340(2):359--382, 2008.

\bibitem{CF1995}
P.~J. Cameron and D.~G. Fon-Der-Flaass.
\newblock Orbits of antichains revisited.
\newblock {\em European J. Combin.}, 16(6):545--554, 1995.

\bibitem{CHHM2017}
M.~Chan, S.~Haddadan, S.~Hopkins, and L.~Moci.
\newblock The expected jaggedness of order ideals.
\newblock {\em Forum Math. Sigma}, 5:e9, 27, 2017.

\bibitem{DF1990}
M.-M. Deza and K.~Fukuda.
\newblock Loops of clutters.
\newblock In {\em Coding theory and design theory, {P}art {I}}, volume~20 of
  {\em IMA Vol. Math. Appl.}, pages 72--92. Springer, New York, 1990.

\bibitem{DPS2015}
K.~Dilks, O.~Pechenik, and J.~Striker.
\newblock Resonance in orbits of plane partitions and increasing tableaux.
\newblock {\em J. Combin. Theory Ser. A}, 148:244--274, 2017.

\bibitem{Duchet1974}
P.~Duchet.
\newblock Sur les hypergraphes invariantes.
\newblock {\em Discrete Math.}, 8:269--280, 1974.

\bibitem{EP2013}
D.~Einstein and J.~Propp.
\newblock Combinatorial, piecewise-linear, and birational homomesy for products
  of two chains, 2013.
\newblock \url{arXiv:1310.5294}.

\bibitem{EP2014}
D.~Einstein and J.~Propp.
\newblock Piecewise-linear and birational toggling.
\newblock In {\em 26th {I}nternational {C}onference on {F}ormal {P}ower
  {S}eries and {A}lgebraic {C}ombinatorics ({FPSAC} 2014)}, Discrete Math.
  Theor. Comput. Sci. Proc., AT, pages 513--524. Assoc. Discrete Math. Theor.
  Comput. Sci., Nancy, 2014.

\bibitem{Fon-der-Flaass1993}
D.~G. Fon-Der-Flaass.
\newblock Orbits of antichains in ranked posets.
\newblock {\em European J. Combin.}, 14(1):17--22, 1993.

\bibitem{GP2017}
P.~Galashin and P.~Pylyavskyy.
\newblock {$R$}-systems, 2017.
\newblock \url{arXiv:1709.00578}.

\bibitem{GR2015}
D.~Grinberg and T.~Roby.
\newblock Iterative properties of birational rowmotion {II}: rectangles and
  triangles.
\newblock {\em Electron. J. Combin.}, 22(3):Paper 3.40, 49, 2015.

\bibitem{GR2016}
D.~Grinberg and T.~Roby.
\newblock Iterative properties of birational rowmotion {I}: generalities and
  skeletal posets.
\newblock {\em Electron. J. Combin.}, 23(1):Paper 1.33, 40, 2016.

\bibitem{Humphreys1990}
J.~E. Humphreys.
\newblock {\em Reflection Groups and Coxeter Groups}.
\newblock Cambridge University Press, 1990.

\bibitem{Joseph2017}
M.~Joseph.
\newblock Antichain toggling and rowmotion, 2017.
\newblock \url{arXiv:1709.09331}.

\bibitem{Panyushev2009}
D.~I. Panyushev.
\newblock On orbits of antichains of positive roots.
\newblock {\em European J. Combin.}, 30(2):586--594, 2009.

\bibitem{Pechenik2014}
O.~Pechenik.
\newblock Cyclic sieving of increasing tableaux and small {S}chr\"oder paths.
\newblock {\em J. Combin. Theory Ser. A}, 125:357--378, 2014.

\bibitem{Pechenik2017}
O.~Pechenik.
\newblock Promotion of increasing tableaux: frames and homomesies.
\newblock {\em Electron. J. Combin.}, 24(3):Paper 3.50, 14, 2017.

\bibitem{Propp2015}
J.~Propp.
\newblock Dynamical algebraic combinatorics in the combinatorial and
  piecewise-linear realms.
\newblock \url{http://faculty.uml.edu/jpropp/dac.pdf}, 2015.

\bibitem{PR2015}
J.~Propp and T.~Roby.
\newblock Homomesy in products of two chains.
\newblock {\em Electron. J. Combin.}, 22(3):Paper 3.4, 29, 2015.

\bibitem{Rush2016}
D.~B. Rush.
\newblock On order ideals of minuscule posets {III}: the {CDE} property, 2016.
\newblock \url{arXiv:1607.08018}.

\bibitem{RS2013}
D.~B. Rush and X.~Shi.
\newblock On orbits of order ideals of minuscule posets.
\newblock {\em J. Algebraic Combin.}, 37(3):545--569, 2013.

\bibitem{RW2015}
D.~B. Rush and K.~Wang.
\newblock On orbits of order ideals of minuscule posets {II}: homomesy, 2015.
\newblock \url{arXiv:1509.08047}.

\bibitem{Sch1972}
M.~P. Sch\"utzenberger.
\newblock Promotion des morphismes d'ensembles ordonn\'es.
\newblock {\em Discrete Math.}, 2:73--94, 1972.

\bibitem{Stanley1972}
R.~Stanley.
\newblock {\em Ordered Structures and Partitions}.
\newblock American Mathematical Society: Memoirs of the American Mathematical
  Society. American Mathematical Society, 1972.

\bibitem{Stanley1986}
R.~P. Stanley.
\newblock {\em Enumerative Combinatorics Volume 1}.
\newblock Wadsworth \& Brooks/Cole, 1986.

\bibitem{Stanley2009}
R.~P. Stanley.
\newblock Promotion and evacuation.
\newblock {\em Electron. J. Combin.}, 16(2, Special volume in honor of Anders
  Bj\"orner):Research Paper 9, 24, 2009.

\bibitem{Catalan}
R.~P. Stanley.
\newblock {\em Catalan Numbers}.
\newblock Cambridge University Press, 2015.

\bibitem{Striker2015}
J.~Striker.
\newblock The toggle group, homomesy, and the {R}azumov-{S}troganov
  correspondence.
\newblock {\em Electron. J. Combin.}, 22(2):Paper 2.57, 17, 2015.

\bibitem{Striker2017}
J.~Striker.
\newblock Dynamical algebraic combinatorics: promotion, rowmotion, and
  resonance.
\newblock {\em Notices Amer. Math. Soc.}, 64(6):543--549, 2017.

\bibitem{Striker2016}
J.~Striker.
\newblock Rowmotion and generalized toggle groups.
\newblock {\em Discrete Math. Theor. Comput. Sci.}, 20(1):Paper No. 17, 26,
  2018.

\bibitem{SW2012}
J.~Striker and N.~Williams.
\newblock Promotion and rowmotion.
\newblock {\em European J. Combin.}, 33(8):1919--1942, 2012.

\bibitem{TY2009}
H.~Thomas and A.~Yong.
\newblock A jeu de taquin theory for increasing tableaux, with applications to
  {$K$}-theoretic {S}chubert calculus.
\newblock {\em Algebra Number Theory}, 3(2):121--148, 2009.

\bibitem{TY2011}
H.~Thomas and A.~Yong.
\newblock The direct sum map on {G}rassmannians and jeu de taquin for
  increasing tableaux.
\newblock {\em Int. Math. Res. Not. IMRN}, (12):2766--2793, 2011.

\bibitem{Vorland2017}
C.~Vorland.
\newblock Homomesy in products of three chains and multidimensional
  recombination, 2017.
\newblock \url{arXiv:1705.02665}.

\end{thebibliography}

\end{document}